\newcommand{\N}{\mathbb{N}}
\newcommand{\R}{\mathbb{R}}
\newcommand{\from}{\colon}
\newcommand{\eps}{\varepsilon}
\newcommand{\la}{\lambda}
\newcommand{\loc}{\text{loc}}
\newcommand{\diam}{\text{diam}}
\newcommand{\vfi}{\varphi}
\newcommand{\Id}{\mathrm{Id}}
\newcommand{\Om}{\Omega}
\newcommand{\Hcal}{{\mathcal{H}}}
\newcommand{\Mcal}{{\mathcal{M}}}
\newcommand{\Acal}{{\mathcal{A}}}
\def\XXint#1#2#3{{\setbox0=\hbox{$#1{#2#3}{\int}$ }
\vcenter{\hbox{$#2#3$ }}\kern-.6\wd0}}
\newcommand{\ind}[1]{\mathds{1}_{#1}}
\DeclareMathOperator{\dist}{dist}
\DeclareMathOperator{\supp}{supp}
\newtheorem{proposition}{Proposition}[section]
\newtheorem{theorem}[proposition]{Theorem}
\newtheorem{corollary}[proposition]{Corollary}
\newtheorem{lemma}[proposition]{Lemma}
\theoremstyle{definition}
\newtheorem{definition}[proposition]{Definition}
\newtheorem{remark}[proposition]{Remark}
\newcommand{\beq}{\begin{equation}}
\newcommand{\eeq}{\end{equation}}
\newcommand{\ben}{\begin{enumerate}}
\newcommand{\een}{\end{enumerate}}
\newcommand{\bit}{\begin{itemize}}
\newcommand{\eit}{\end{itemize}}
\newcommand{\NTr}{\ell}
\DeclareMathOperator{\od}{\Lambda}
\title{Singular analysis of the optimizers of the principal eigenvalue in indefinite weighted Neumann problems}
\author{Dario Mazzoleni, Benedetta Pellacci and Gianmaria Verzini}
\begin{document}
\maketitle

\begin{abstract}
We study the minimization of the positive principal eigenvalue associated to a weighted Neumann problem settled in a bounded smooth domain $\Omega\subset \R^{N}$, within a suitable class of sign-changing weights.
Denoting with $u$ the optimal eigenfunction and with 
$D$ its super-level set associated to the optimal weight, 
we perform the analysis of the singular limit of the optimal eigenvalue 
as the measure  of  $D $  tends to zero.
We show that, when the measure of $D$ is sufficiently small, $u $ has a  unique 
local maximum point lying on the boundary of $\Omega$ and $D$ is connected.
Furthermore, the boundary of $D$ intersects the boundary of the box $\Omega$, and more precisely, ${\mathcal H}^{N-1}(\partial D \cap \partial \Omega)\ge C|D|^{(N-1)/N}
$ for some universal constant $C>0$. 
Though widely expected, these properties are still unknown if the measure of $D$ is arbitrary.
\end{abstract}
\noindent
{\footnotesize \textbf{AMS-Subject Classification}}. 
{\footnotesize 49R05, 49Q10, 92D25, 47A75, 35B40.}\\
{\footnotesize \textbf{Keywords}}. 
{\footnotesize Weighted eigenvalues, singular limits, survival threshold, concentration phenomena.}

\section{Introduction}
A classical model describing the spatial dispersal of a population in an heterogenous 
environment relies on a reaction-diffusion equation of logistic type. In order to represent
the habitat subdivided into patches, one introduces a sign-changing weight $m$ in the equation, 
so that the favorable and hostile zones  respectively correspond to  the positivity and negativity sets of $m$ (see \cite{MR1014659,MR2214420,MR2191264}). The equation is naturally associated  with homogeneous Neumann boundary conditions
when considering this problem in a bounded open set $\Omega\subset \R^{N}$ 
with $\partial \Omega$ acting as a reflecting barrier.

It is well known (see e.g.\ \cite{MR2191264}) that the survival  of the population is 
guaranteed when the principal eigenvalue of the weighted problem
\begin{equation}\label{eq:eigprobOD}
\begin{cases}
 -\Delta u = \lambda m u & \text{in }\Omega\\
 \partial_\nu u = 0 & \text{on }\partial\Omega.
\end{cases}
\end{equation}
is below a certain positive threshold. 
A principal eigenvalue for \eqref{eq:eigprobOD} is a number $\lambda$
having a positive eigenfunction. In case $m^+$ and $m^-$ are
both nontrivial, \eqref{eq:eigprobOD} admits two principal eigenvalues, $0$ and
$\lambda(m)$. Moreover, $\lambda(m)>0$ if and only if $\int_\Omega m <0$, in which case
\begin{equation}\label{eq:def_lambda(m)}
\lambda(m):= \min \left\{\dfrac{\int_\Omega |\nabla u|^2\,dx}{\int_\Omega m u^2\,dx} :  u\in H^1(\Omega),\ \int_\Omega m u^2\,dx>0\right\}.
\end{equation}
Taking into account that the smaller $\lambda(m) $ is, the more chances of  survival 
the population has, a largely studied problem consists in minimizing $\lambda(m)$ 
with respect to the weight or to other relevant parameters of 
the model. The literature is quite extensive, and we refer to the recent papers 
\cite{mazari:hal-02987223,MR3498523,MR3771424,dipierro2021nonlocal} and references therein, for various interesting phenomena ranging from fragmentation to different nonlocal effects.

In this paper, we focus on the minimization of $\lambda(m)$ with respect to the weight. As proved in \cite{ly}, when the mean $\int_\Omega m$ is fixed, as well as lower and upper bounds $-\beta \le m \le 1$, 
the infimum of $\lambda(m)$ is achieved by a
bang-bang (i.e. piecewise constant) optimal weight $m = \ind{D} - \beta\ind{\Omega\setminus D}$,
for some measurable set $D\subset\Omega$. Therefore, one can
equivalently consider the minimization over the class of bang-bang weights $\ind{D} - \beta\ind{\Omega\setminus D}$, under a volume constraint on $D$ in order to fix
the average of $m$. 
\begin{definition}\label{def:od}
Let $\beta>0$. For any $D\subset\Omega$ such that $|D| <\dfrac{\beta|\Omega|}{\beta+1}$
we define the eigenvalue of the corresponding bang-bang weight as
\begin{equation}\label{eq:def_lambda_beta_D}
\lambda(D):=\lambda(\ind{D} - \beta\ind{\Omega\setminus D})= \min \left\{
\dfrac{\int_\Omega |\nabla u|^2\,dx}{\int_D u^2\,dx - \beta \int_{\Omega\setminus D} u^2\,dx} :  
\int_D u^2\,dx>\beta \int_{\Omega\setminus D} u^2\,dx\right\}. 
\end{equation}
For $0<\delta<\dfrac{\beta|\Omega|}{\beta+1}$, the optimal design problem for the survival threshold is 
\begin{equation}\label{eq:def_od}
\od(\delta)=\min\Big\{\lambda(D):D\subset \Om,\mbox{ measurable, }|D|=\delta\Big\}.
\end{equation}
\end{definition}

It follows that $\od(\delta)$ is achieved by a set $D_\delta$ (see \cite{MR1014659,ly}), which 
turns out to be the super-level set of a  corresponding principal (positive) eigenfunction
$u_\delta$. In particular, $D_\delta$ contains the maximum points of $u_\delta$. 
As a consequence, both for modeling reasons, and from the mathematical point of view, natural questions arise about the location and the shape of the optimal set $D$. 

This issue is mostly open in its generality, and it is completely understood only in dimension one:
if  $\Omega=(0,1)$, then $u_\delta$ has a unique maximum point, located at the boundary, and $D_\delta$ is either the interval $(0,\delta)$ or $(1-\delta,1)$ (see
 \cite{MR1105497,ly,llnp}, also for the related problem with different boundary conditions).
In particular, in this case the optimal set is connected and its boundary intersects the one of 
$\Omega$. While these properties are expected also in higher dimensions (see 
e.g. \cite[Open Problem 1]{mazari:hal-02987223}), their current understanding, up to our knowledge, is confined to the case in which $\Omega=\Pi_{i=1}^{N}(0,a_{i})$ is an orthotope, a
situation of particular interest because of its natural 
relation with the so-called periodically fragmented environment model (see \cite{MR2214420}).

This case is investigated in \cite{llnp}, where Steiner 
symmetrization arguments are exploited to show that both the principal eigenfunction and the 
optimal weight are non-increasing along each coordinate direction. It follows that the maximum of $u_\delta$ is located at one of the vertices of $\Omega$, 
$D_\delta$ is connected, and $\overline{D}_\delta \cap\partial\Omega$ has positive  $N-1$ Hausdorff measure. 

The main goal of this paper is to study the location and shape of the optimal set for general 
domains,  when the measure $\delta$ is small. Our main result concerning the properties of $D_\delta$ and of the corresponding eigenfunction $u_\delta$ is the following.

\begin{theorem}\label{th:intro_D}
Let $\Omega\subset \R^N$ be an open and bounded set with boundary of class $C^{2,\alpha}$ for some $\alpha\in(0,1)$.
For every $0<\eps<1$ there exists $\delta_{0}>0$ such that, for every $\delta\in (0,\delta_{0})$:
\begin{enumerate}
\item $u_\delta$ has a unique local maximum point $P_\delta\in \partial \Omega$;
\item $D_\delta$ is connected;
\item defining $r_\pm(\delta)$ in such a way that $|B_{r_\pm(\delta)}|=2\delta(1\pm\eps)$, it results
\[
B_{r_-}(P_\delta)\cap\Omega \subset D_\delta \subset B_{r_+}(P_\delta)
\cap\Omega.
\]
In particular, ${\mathcal H}^{N-1}(\partial D_{\delta}\cap \partial \Omega)\ge C\delta^{(N-1)/N}$
for some universal constant $C>0$.
\end{enumerate}
\end{theorem}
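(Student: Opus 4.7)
\medskip
\noindent\textbf{Proof plan.}

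The natural strategy is a concentration-compactness and blow-up analysis as $\delta\to 0^+$. Since $|D_\delta|\to 0$, the positive eigenfunction $u_\delta$ must concentrate around its maximum points at the scale $r_\delta:=\delta^{1/N}$, and one expects the sharp rate $\od(\delta)\sim \mu_*/\delta^{2/N}$, where $\mu_*=\mu_*(\beta,N)$ is the optimal eigenvalue of a universal limit problem posed on a half-space $\R^N_+$ with a prescribed half-unit mass for the favorable set. Once this blow-up picture is made rigorous, the rigidity of the half-space optimizer (a half-ball centered on $\partial\R^N_+$ with a Schwarz-symmetric eigenfunction) forces all the geometric conclusions.

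The first step is to pin down the asymptotics $\od(\delta)\delta^{2/N}\to \mu_*$. The upper bound is constructive: after flattening $\partial\Om$ in a $C^{2,\alpha}$ chart around any boundary point $P$, use as competitor in \eqref{eq:def_lambda_beta_D} a rescaled minimizer of the half-space limit problem supported in $B_{r_\delta}(P)\cap\Om$; the regularity of the chart introduces only $o(1)$ corrections. The matching lower bound relies on the exponential decay
\[
u_\delta(x)\;\lesssim\; \|u_\delta\|_\infty\, e^{-c\sqrt{\beta\od(\delta)}\,\dist(x,D_\delta)},
\]
which follows from the sign-definite equation $-\Delta u_\delta = -\beta\od(\delta)u_\delta$ on $\Om\setminus D_\delta$, and allows one to truncate the Rayleigh quotient to an $r_\delta$-neighborhood of $D_\delta$. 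A separate but essential point is that placing the competitor at an interior point gives a strictly worse constant than at a boundary point: this can be verified by reflecting across the tangent hyperplane at a boundary point and comparing with the Schwarz-symmetric interior optimizer on $\R^N$, showing that the half-space infimum $\mu_*$ is strictly smaller than its interior counterpart.

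With the asymptotics in hand, I perform the blow-up around any local maximum $P_\delta$ of $u_\delta$: set $\Om_\delta:=(\Om-P_\delta)/r_\delta$, $v_\delta(y):=u_\delta(P_\delta+r_\delta y)/\|u_\delta\|_\infty$, and $\tilde D_\delta:=(D_\delta-P_\delta)/r_\delta$, so that
\[
-\Delta v_\delta = \mu_\delta\,\bigl(\ind{\tilde D_\delta}-\beta\ind{\Om_\delta\setminus \tilde D_\delta}\bigr)v_\delta,\qquad \mu_\delta = r_\delta^2\,\od(\delta)\to\mu_*,
\]
with homogeneous Neumann conditions on $\partial\Om_\delta$. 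Standard elliptic regularity together with the exponential decay yields uniform $C^{1,\alpha}_{\loc}$ estimates on $v_\delta$, and compactness gives $v_\delta\to v_\infty$ locally uniformly and $\tilde D_\delta\to D_\infty$ in $L^1$ and in Hausdorff distance, with $|D_\infty|=|B_1|/2$. Since $(v_\infty,D_\infty)$ is admissible and the eigenvalue is asymptotically sharp, it is a minimizer of the universal half-space problem; by reflection across the limit tangent hyperplane and Schwarz symmetrization, $D_\infty$ is a half-ball centered at the origin and $v_\infty$ is symmetric-decreasing with a unique strict maximum there (Hopf's boundary point lemma rules out flatness at the tip).

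The conclusions follow in the order (3)$\Rightarrow$(1)$\Rightarrow$(2). Item (3) is just the Hausdorff convergence of $\tilde D_\delta$ to the limit half-ball, translated back to the original scale; the perimeter estimate $\Hcal^{N-1}(\partial D_\delta\cap\partial\Om)\gtrsim\delta^{(N-1)/N}$ is immediate since the limit half-ball touches $\partial\R^N_+$ along a flat disk of $(N-1)$-measure of order one in the rescaled variables. For item (1), a maximum-principle argument on $\Om\setminus D_\delta$ (the equation $-\Delta u = -\beta\od(\delta)u$ forces $u_\delta$ to be subharmonic, hence with no interior local maxima there) shows that every local maximum of $u_\delta$ lies in $\overline{D_\delta}\subset B_{r_+}(P_\delta)$; by uniqueness of the limit maximum at the origin, only one survives, and it lies on $\partial\Om$ because the blow-up window is attached to $\partial\R^N_+$. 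Item (2) is then topological: if a connected component $C$ of $D_\delta=\{u_\delta>\tau_\delta\}$ did not contain $P_\delta$, then $u_\delta|_{\overline C}$ would attain its maximum either at an interior point of $C$ or at a Neumann boundary point of $C\cap\partial\Om$, producing a second local maximum and contradicting (1).

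The main obstacle is the blow-up compactness step: one must upgrade weak/$L^1$ convergence of $\tilde D_\delta$ to genuine Hausdorff convergence, and rigorously justify passage to the limit in the eigenvalue problem across the moving interface $\partial\tilde D_\delta$. This requires both the $C^{2,\alpha}$ regularity of $\partial\Om$ (to straighten the boundary uniformly in $\delta$) and a non-degeneracy property of $u_\delta$ near $\partial D_\delta$ of the form $|\grad u_\delta|\gtrsim c\,\|u_\delta\|_\infty/r_\delta$ on $\partial D_\delta$, inherited from the analogous non-degeneracy of the limit optimizer and from the characterization of $D_\delta$ as the super-level set $\{u_\delta>\tau_\delta\}$.
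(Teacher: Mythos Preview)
Your overall blow-up strategy is the same as the paper's, and most of the ingredients you list (upper/lower bounds for $\od(\delta)\delta^{2/N}$, exponential decay outside $D_\delta$, boundary straightening, rigidity of the limit optimizer via Schwarz symmetrization) are exactly what the paper uses. However, the way you propose to close the argument has two genuine gaps.

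\textbf{Hausdorff convergence of $\tilde D_\delta$ is not available.} From the blow-up you only get $m_\delta\rightharpoonup m$ weakly-$*$ in $L^\infty_{\loc}$ and $v_\delta\to v_\infty$ in $C^{1,\vartheta}_{\loc}$; neither yields $L^1$ or Hausdorff convergence of $\tilde D_\delta$. Your proposed cure, a gradient lower bound $|\nabla u_\delta|\gtrsim \|u_\delta\|_\infty/r_\delta$ on $\partial D_\delta$ ``inherited from the limit'', is circular: to transfer non-degeneracy from $\partial D_\infty$ to $\partial \tilde D_\delta$ you would already need $\partial \tilde D_\delta$ to lie in a small neighborhood of $\partial D_\infty$, which is precisely the conclusion. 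The paper avoids this entirely. It exploits the super-level set structure $\tilde D_k=\{w_k>t_k\}$ together with the $C^1$-uniform convergence $w_k\to w_\infty$ and a measure argument (the inequality $|\tilde D_k\cap B_r|\le 2+o(1)$ combined with $\int_{B_r}(m+\beta)^2\le\liminf\int_{B_r}(m_k+\beta)^2$) to first pin down the threshold, $t_k\to t_\infty=\inf_{2^{1/N}B}w_\infty$, and then read off the inclusions $2^{1/N}(1-\eps)B\subset \tilde D_k^{\text{in}}\subset 2^{1/N}(1+\eps)B$ directly from the radial monotonicity of $w_\infty$. No set-convergence and no gradient non-degeneracy are needed.

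\textbf{The order (3)$\Rightarrow$(1)$\Rightarrow$(2) is circular.} To get the \emph{full} inclusion (3) for $D_\delta$ (not just a component) you must already exclude connected components of $D_\delta$ far from $P_\delta$, which is (2). Relatedly, blowing up ``around any local maximum'' and normalizing by $\|u_\delta\|_\infty$ does not prevent the limit from vanishing when the local maximum is not global. The paper's order is: first blow up at the \emph{global} maximum (where non-vanishing is automatic), obtain the inclusions only for one connected component $D_\delta^{\text{in}}$, and deduce the lower bound $\inf_{D_\delta}u_\delta\ge\sigma\delta^{-1/2}$; this non-vanishing then makes the blow-up at \emph{any} local maximum non-trivial, forcing all local maxima to be $o(\delta^{1/N})$-close to each other; uniqueness follows from $C^2$ convergence to the strictly concave limit near $0$, which gives (1), hence (2), and only then the full (3) for $D_\delta=D_\delta^{\text{in}}$.
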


Theorem \ref{th:intro_D} provides much information concerning $u_{\delta}$ and 
$D_{\delta}$.
First of all, the uniqueness of the local maximum point $P_{\delta}$, for $\delta$ sufficiently small, immediately implies that $D_{\delta}$ is connected. Secondly, $P_{\delta}\in \partial \Omega$
and $D_{\delta}\subset B_{r_{+}}(P_{\delta})$; 
as a consequence, as $\delta\to0$  both $D_\delta$ and $u_\delta$ concentrate at 
$P_\delta$. In this line, a more precise description of the decay properties of $u_\delta$ is 
obtained in Proposition \ref{prop:2.3} ahead. 
In addition, the third conclusion shows that the shape of $D_{\delta}$, roughly speaking, 
is approximated by the intersection of a ball with $\Omega$. Hence, the $N-1$ 
dimensional Hausdorff measure of the intersection $\partial D_{\delta}\cap \partial 
\Omega$ is positive for $\delta$ sufficiently small (see Remark \ref{rem:Hn-1}). 

It has been discussed for a while in the literature whether $D_\delta$ has a spherical 
shape or not, meaning that $D_\delta \cap \Omega = B \cap \Omega$ for some suitable ball $B$. 
In particular, this was suggested by some numerical 
simulations in \cite{haro}, when $\Omega\subset\R^2$ is a square and $\delta$ is small. 
On the other hand, in \cite{llnp} it is proved that, for general $\Omega$, 
$\partial D_\delta\cap \Omega$ cannot contain portions of spheres. Motivated by this result, 
we devoted some previous papers to analyze the occurrence of spherical 
shapes for $D_\delta$ in some singularly perturbed regime. In particular, in  
\cite{mapeve,mapeve_matrix} we have shown that, for polyhedral 
domains $\Omega$ and $\delta$ fixed, spherical shapes can emerge, centered at the vertex of the smallest solid angle, when the parameter $\beta$ in Definition \ref{def:od} 
diverges to $+\infty$. From this point of view, the last part of Theorem \ref{th:intro_D} 
shows the same phenomenon, for smooth $\Omega$, $\beta$ fixed and $\delta\to0$.

Theorem \ref{th:intro_D} is reminiscent of well-known results in the study of 
singularly perturbed elliptic nonlinear Neumann problems, firstly treated in 
\cite{nitakagi_cpam}. Actually, the general strategy  adopted here is inspired by the one developed by Ni and Takagi. More precisely, we perform a blow-up analysis 
near a maximum point of the eigenfuction $u_\delta$, rescaling the problem in order to 
pass to the limit. On the other hand, our setting is quite different and it requires some new ideas.

Indeed, although our problem is linear, the presence of the sign-changing, piecewise constant weight 
$m_\delta$, depending on the unknown set $D_\delta$, gives rise to several difficulties. First 
of all, in the rescaling procedure we obtain a sequence of weights, which we can only show to 
converge in the weak$^*$-topology of $L^{\infty}_{{\rm loc}}$. Henceforth, 
the optimal regularity for the eigenfunctions $u_\delta$ is  merely $C^{1,\alpha}$, 
obstructing higher order convergence of the blow-up sequences. Furthermore, in order to 
complete our argument, we need to keep track of the behavior of the optimal set during the 
asymptotical analysis; this delicate information is obtained by showing that the boundary of 
$D_\delta$ is trapped in suitable annuli, for $\delta$ small enough (see for more details 
Proposition \ref{prop:nonvanish}). In turn, this property will be fundamental in proving the third 
conclusion of Theorem \ref{th:intro_D}.

As usual, when performing a blow-up analysis it is crucial to detect the natural associated 
limit  problem. In our case, this is provided (up to scaling) by the following one.
\begin{definition}
\[
I_{\mathcal M}:=\inf\left\{\mu(m) : m\in \mathcal M\right\},
\quad \text{where}\quad
\mu(m):=\inf\left\{\frac{\int_{\R^{N}}|\nabla v|^{2}}{\int_{\R^{N}}mv^{2}} : v\in H^{1}(\R^{N}), \int_{\R^{N}}mv^{2}>0\right\}
\]
for $m$ lying in the class
\[
\mathcal M:=\left\{m\in L^{\infty}(\R^{N}): 
-\beta\leq m\leq 1 \text{ a.e.\ in } \R^{N},
\quad\int_{\R^{N}}(m+\beta)\leq 1+\beta 
\right\}. 
\]
\end{definition}
Concerning $I_{\mathcal M}$ we prove the following result.
\begin{theorem}\label{thm:limit_intro}
It results
$$
I_{\mathcal M}=\mu(\ind{B}-\beta \ind{B^{c}})
$$
where $B$ is the ball centered at zero and of measure $1$ and 
$\mu(\ind{B}-\beta \ind{B^{c}})$ 
is achieved by a positive radially 
symmetric eigenfunction 
$w\in C^{1,1}(\R^{N})$ exponentially decaying w.r.t.\ to $r=|x|$.
\end{theorem}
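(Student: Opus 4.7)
The plan is to prove the identity $I_{\Mcal}=\mu(m^*)$ with $m^*=\ind B-\beta\ind{B^c}$ and $B$ the ball of measure $1$ centered at the origin, together with the claimed properties of the minimizer. The argument proceeds in three phases: (i) reduction of the admissible class by bang-bang and Schwarz symmetrization to the single weight $m^*$; (ii) existence of a radial minimizer via the direct method on $\R^N$; (iii) positivity, $C^{1,1}$ regularity and exponential decay.

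\textbf{Reduction.} For any $u\in H^1(\R^N)$ with $\int_{\R^N} mu^2>0$, write
\begin{equation*}
\int_{\R^N} mu^2=\int_{\R^N}(m+\beta)u^2-\beta\int_{\R^N}u^2;
\end{equation*}
since $0\le m+\beta\le 1+\beta$ and $\int(m+\beta)\le 1+\beta$, the bathtub principle maximizes the first term by $(m+\beta)^\sharp=(1+\beta)\ind{\{u^2>t\}}$, with $t$ chosen so that $|\{u^2>t\}|$ saturates the volume budget $1$ (or equals $|\mathrm{supp}\,u|$ if smaller). This produces a bang-bang competitor $\tilde m=\ind D-\beta\ind{D^c}$ with $|D|\le 1$ and $\int\tilde m u^2\ge\int mu^2$, showing that $I_\Mcal$ is unchanged when we minimize only over such bang-bang weights. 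For such a weight, simultaneous Schwarz rearrangement of $D$ (to $B_r=D^*$) and of the competitor $u$ (to $u^*$) gives $\int_{B_r}(u^*)^2\ge\int_D u^2$ and $\int_{B_r^c}(u^*)^2\le\int_{D^c}u^2$ by Hardy-Littlewood, together with $\int|\nabla u^*|^2\le\int|\nabla u|^2$ by Pólya-Szegő, so $\mu(\ind{B_r}-\beta\ind{B_r^c})\le\mu(\ind D-\beta\ind{D^c})$. Finally, pointwise monotonicity of $\mu$ in the weight forces the optimal choice $|B_r|=1$, yielding $I_\Mcal=\mu(m^*)$.

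\textbf{Existence.} By the symmetrization step we may seek the minimizer in $H^1_{\mathrm{rad}}(\R^N)$. Take a minimizing sequence $u_n$ normalized by $\int m^* u_n^2=1$, so that $\int|\nabla u_n|^2\to I_\Mcal$. The decisive a priori bound is $L^2$-boundedness of $\{u_n\}$: if $\|u_n\|_{L^2}\to\infty$, then $v_n:=u_n/\|u_n\|_{L^2}$ satisfies $\int v_n^2=1$ and $\int|\nabla v_n|^2\to 0$, so $v_n\rightharpoonup 0$ weakly in $H^1(\R^N)$; the compact embedding $H^1(B)\hookrightarrow L^2(B)$ gives $\int_B v_n^2\to 0$, whence
\begin{equation*}
\int_{\R^N}m^* v_n^2=(1+\beta)\int_B v_n^2-\beta\longrightarrow-\beta<0,
\end{equation*}
contradicting $\int m^* v_n^2=\|u_n\|_{L^2}^{-2}>0$. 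Thus $u_n$ is bounded in $H^1(\R^N)$; up to a subsequence $u_n\rightharpoonup w$, and splitting $\R^N=B\cup B^c$ — strong $L^2(B)$-convergence on $B$, weak lower semicontinuity on $B^c$ where the weight is negative — yields $\int m^* w^2\ge\limsup\int m^* u_n^2=1$ and $\int|\nabla w|^2\le I_\Mcal$. Hence $w$ realizes $\mu(m^*)=I_\Mcal$.

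\textbf{Qualitative properties and main obstacle.} Replacing $w$ by $|w|$ preserves the Rayleigh quotient, so we may take $w\ge 0$; the strong maximum principle applied to the linear equation $-\Delta w=I_\Mcal m^* w$ (with $m^*\in L^\infty$) then gives $w>0$ in $\R^N$, and radiality is inherited from the radial class. Since $\Delta w=-I_\Mcal m^* w\in L^\infty(\R^N)$, interior elliptic estimates provide $w\in W^{2,\infty}_{\mathrm{loc}}=C^{1,1}_{\mathrm{loc}}$, upgrading to global $C^{1,1}$ once exponential decay is in hand. On $B^c$, $w$ solves the modified Helmholtz equation $-\Delta w+I_\Mcal\beta\, w=0$; a comparison argument against the radial supersolution $Ce^{-\gamma|x|}$, valid for $|x|$ large enough and any $\gamma<\sqrt{I_\Mcal\beta}$, together with boundedness of $w$ on $\partial B$, yields $0<w(x)\le Ce^{-\gamma|x|}$. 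The most delicate step is the compactness argument in the existence phase: noncompactness of $\R^N$ forces one to rule out loss of mass at infinity, and this is achieved only by combining the radial restriction (killing translation invariance) with the negativity of $m^*$ on $B^c$ (penalizing mass that spreads outside $B$).
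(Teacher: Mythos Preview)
Your argument follows the same route as the paper (bathtub $\to$ bang-bang, Schwarz symmetrization $\to$ ball, direct method exploiting compactness on $B$ together with the negative weight on $B^c$, then positivity and decay). One technical slip to fix: the implication ``$\Delta w\in L^\infty\Rightarrow w\in W^{2,\infty}_{\mathrm{loc}}$'' is false in general---Calder\'on--Zygmund theory does not extend to $p=\infty$, and a bounded Laplacian only yields $C^{1,\alpha}$ for every $\alpha<1$. The $C^{1,1}$ regularity here comes instead from the \emph{piecewise constant} structure of $m^*$: $w$ is smooth on $B$ and on $B^c$ separately (constant-coefficient equations), with $w$ and $w'$ matching across $\partial B$, so the radial ODE $w''+\tfrac{N-1}{r}w'=-I_\Mcal m^*(r)w$ shows $w''$ is bounded with a jump at $\partial B$. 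For the decay, the paper goes further and identifies $w$ on $B^c$ with a modified Bessel function $K_{N/2-1}$, obtaining the sharp asymptotics $w(r)\sim C r^{-(N-1)/2}e^{-\sqrt{I_\Mcal\beta}\,r}$ (and likewise for $w'$); your comparison with $Ce^{-\gamma|x|}$ gives only subcritical rates $\gamma<\sqrt{I_\Mcal\beta}$, which suffices for the theorem as stated but not for the finer expansions used in Section~\ref{sec:bfa}.
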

Theorem \ref{thm:limit_intro} allows to conclude the blow-up procedure, in view of the following 
result.
\begin{theorem}\label{thm:bound_intro}
Let $\Omega\subset \R^N$ be an open and bounded set with boundary of class $C^{2,\alpha}$ 
for some $\alpha\in(0,1)$.
As $\delta\to0$
\begin{equation}\label{eq:coeff1}
\od(\delta)=\frac{1}{4^{1/N}}I_{\mathcal M}\, \delta^{-2/N}(1+o(1)).
\end{equation}
Moreover
\begin{equation}\label{eq:upperbound}
\od(\delta)\leq \frac{1}{4^{1/N}} I_{\mathcal M}\, \delta^{-2/N} \left(1-\Gamma \widehat H \delta^{1/N}+o(\delta^{1/N})\right),
\end{equation}
where $\widehat H$ denotes the maximum of the mean curvature of 
$\partial \Omega$, and $\Gamma>0$ is 
a universal constant, independent of $\Omega$ (see equation \eqref{eq:Gamma}). 
\end{theorem}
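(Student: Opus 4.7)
The asymptotic \eqref{eq:coeff1} will be proved by matching an upper bound, obtained by transplanting the radial optimizer $w$ of Theorem~\ref{thm:limit_intro} near the boundary of $\Omega$, against a lower bound produced by a blow-up argument centered at the maximum point of $u_\delta$ provided by Theorem~\ref{th:intro_D}. The refined inequality \eqref{eq:upperbound} then follows from a second-order expansion of the same test function, which exploits the $C^{2,\alpha}$-geometry of $\partial \Omega$.

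\medskip

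\textbf{Upper bound at leading order.} Fix $P\in\partial\Omega$ and a local $C^{2,\alpha}$-diffeomorphism $\Phi$ that flattens $\partial\Omega$ near $P$. Let $w$ be the radial optimizer of Theorem~\ref{thm:limit_intro}, associated with the weight $m_\ast=\ind{B}-\beta\ind{B^c}$ on $\R^N$ and $|B|=1$, and set $r:=(2\delta)^{1/N}$. Since $w$ is radial, $\partial_\nu w\equiv 0$ on every hyperplane through the origin, so the test function
\[
v_\delta(x):=w\!\bigl((\Phi(x)-\Phi(P))/r\bigr)\chi(x),
\]
with $\chi$ a smooth cutoff, respects the homogeneous Neumann condition along the flattened boundary up to errors of order $r^2$ controlled by the $C^{2,\alpha}$-norm of $\Phi$. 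Choosing $D_\delta:=\{v_\delta>c\}\cap\Omega$ with $c$ such that $\{w>c\}=B$, the exponential decay of $w$ makes every truncation error negligible, and a change of variables reduces the Rayleigh quotient \eqref{eq:def_lambda_beta_D} to
\[
r^{-2}\,\frac{\int_{\R^N_+}|\nabla w|^2\,dy}{\int_{\R^N_+}m_\ast\,w^2\,dy}+o(r^{-2})=r^{-2}\,\mu(m_\ast)+o(r^{-2})=4^{-1/N}I_{\mathcal M}\,\delta^{-2/N}(1+o(1)),
\]
where the middle equality uses the evenness of $w$ to identify the half-space problem with the one on $\R^N$.

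\medskip

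\textbf{Lower bound via blow-up.} Let $\delta_n\to 0$ and let $(u_n,D_n)$ be the corresponding optimizers with $\lambda_n:=\od(\delta_n)$. By Theorem~\ref{th:intro_D}, for $n$ large $u_n$ attains its maximum at some $P_n\in\partial\Omega$ and $D_n\subset B_{r_+(\delta_n)}(P_n)$. Normalizing $\|u_n\|_\infty=1$, set
\[
\tilde u_n(y):=u_n(P_n+r_ny),\qquad \tilde m_n(y):=\ind{D_n}(P_n+r_ny)-\beta\,\ind{\Omega\setminus D_n}(P_n+r_ny),
\]
with $r_n:=(2\delta_n)^{1/N}$, while flattening $\partial\Omega$ near $P_n$. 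The constraint $|D_n|=\delta_n$ forces $\int(\tilde m_n+\beta)\leq 1+\beta$; extending evenly across the flattened boundary, any weak-$*$ limit $\tilde m$ of $\tilde m_n$ lies in $\mathcal M$. Standard elliptic regularity gives $\tilde u_n\to\tilde u$ in $C^{1,\alpha}_{\mathrm{loc}}$ along a subsequence, with $\tilde u(0)=1$. Passing to the limit in the Rayleigh quotient yields
\[
\liminf_{n\to\infty}\lambda_n r_n^2\geq\mu(\tilde m)\geq I_{\mathcal M},
\]
which combined with the upper bound proves \eqref{eq:coeff1}. The delicate point here is that $\tilde m_n$ converges only weak-$*$ in $L^\infty_{\mathrm{loc}}$, so passing to the limit in the product $\tilde m_n\tilde u_n^2$ requires \emph{strong} local convergence of the $\tilde u_n$; the uniform confinement of $\supp\tilde m_n^+$ furnished by Theorem~\ref{th:intro_D} is what prevents mass from escaping to infinity after rescaling.

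\medskip

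\textbf{Curvature correction.} For \eqref{eq:upperbound}, select $P$ at a point of maximal mean curvature $\widehat H$ and perform the construction above in normal coordinates, where $\partial\Omega$ is the graph $x_N=\tfrac12\sum_{i=1}^{N-1}\kappa_i x_i^2+O(|x'|^3)$ with $\sum_{i=1}^{N-1}\kappa_i=(N-1)\widehat H$. Expanding $\int_\Omega|\nabla v_\delta|^2$ and $\int_{D_\delta}v_\delta^2-\beta\int_{\Omega\setminus D_\delta}v_\delta^2$ one order beyond the leading one, the $r^{N+1}$-contributions collect along $\widehat H$ with a definite positive coefficient, computed from radial moments of $w$ and defining the universal constant $\Gamma$ of \eqref{eq:Gamma}. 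The hardest bookkeeping is isolating this correction while controlling the cross-terms arising from the change of variables; the exponential decay of $w$ and the $C^{2,\alpha}$-smoothness of $\partial\Omega$ are what render this expansion rigorous.
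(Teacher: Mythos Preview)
Your overall strategy---transplanting $w$ near $\partial\Omega$ for the upper bound and blowing up at the maximum point for the lower bound---is exactly the paper's. There are, however, two genuine gaps.

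\textbf{Circularity in the lower bound.} You invoke Theorem~\ref{th:intro_D} to place $P_n\in\partial\Omega$ and to confine $D_n$ before running the blow-up. In the paper the logical order is the reverse: the lower bound half of \eqref{eq:coeff1} is established \emph{during} the blow-up argument (Lemmas~\ref{le:step1}--\ref{le:step2}), and only afterwards are the conclusions of Theorem~\ref{th:intro_D} derived (Lemma~\ref{le:improvedconv} through Proposition~\ref{prop:unsolomax}). The blow-up there is centered at a \emph{global} maximum point $P_\delta$, which a priori may lie in the interior; the upper bound alone is used to force $P_\delta$ towards $\partial\Omega$, and non-vanishing of the limit follows from $v_j(0)=\max v_j$ together with $\int m_j v_j^2>0$ (see \eqref{eq:intpositivo}), not from any confinement of $D_\delta$. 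In particular, the ``uniform confinement of $\supp\tilde m_n^+$'' you cite is unnecessary: the constraint $\int(\tilde m+\beta)\le 2(1+\beta)$ follows directly from weak-$*$ convergence and $|D_n|=\delta_n$, and suffices to place the limit weight in $\mathcal M_2$. As written, your argument is circular within the paper's logical structure.

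\textbf{The refined upper bound.} Two essential points are glossed over. First, with $r=(2\delta)^{1/N}$ and $D_\delta=\{v_\delta>c\}\cap\Omega$, you do \emph{not} have $|D_\delta|=\delta$: the Jacobian of $\Phi$ introduces a correction of order $r$, so your competitor is inadmissible for $\od(\delta)$. The paper defines $r=r(\delta)$ \emph{implicitly} through the constraint and expands it (Lemma~\ref{le:rdelta}); this volume correction enters at the same order $\delta^{1/N}$ as the curvature term and must be tracked. Second, after expanding numerator and denominator to order $r$ (Proposition~\ref{prop:boundabove}, Corollary~\ref{cor:boundabove}), the sign of the net coefficient is not at all obvious: the paper needs a Pohozaev-type identity (Lemma~\ref{le:technical}, obtained by testing $-\Delta w=\mu m w$ against $z_N^2\partial_N w$ and $z\cdot\nabla w$) to rewrite $\mu(B)\gamma_1-(N-1)\gamma$, so that when the Rayleigh-quotient expansion is merged with the $\delta$--$r$ relation via Lemma~\ref{le:MPV4.10}, the curvature-independent terms \emph{cancel} and only $2\alpha\gamma/\int_{\R^N_+}|\nabla w|^2>0$ survives. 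This cancellation is the crux of the second-order computation and cannot be absorbed into ``bookkeeping''.
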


The proof of Theorem \ref{thm:limit_intro} relies on the following argument: first,
an adaptation of the bathtub principle shows that $I_{\mathcal M}$ is 
achieved by a bang-bang weight $m=\ind{A}-\beta \ind{A^{c}}$; then
symmetrization arguments yield that $A=B$ and finally we prove that $\mu(\ind{B}-\beta \ind{B^{c}})$  is achieved. In demonstrating this last 
step, it is crucial that the optimal weight is positive only in a bounded 
set, to guarantee good compactness properties. 

On the other hand, to prove \eqref{eq:upperbound} in Theorem \ref{thm:bound_intro}, we construct a competitor for $\od(\delta)$: note that 
this amounts to build a suitable $H^{1}(\Omega)$ function and a weight of the form 
$\ind{A}-\beta\ind{A^{c}}$ where $|A|=\delta$. This last requirement makes the argument delicate 
as it forces us to introduce an unknown rescaling factor $r=r(\delta)$. Then, in order to obtain the desired comparison, we need to study the asymptotical  
behavior of $r(\delta)$.
\begin{remark}
We observe that Theorem \ref{th:intro_D} cannot be directly applied to the relevant case of the orthotope, 
which is not $C^{2,\alpha}$. Actually, such smoothness is crucial in the blow-up 
procedure: indeed, as concentration happens near $\partial\Omega$, we need to exploit a 
suitable diffeomorphism to straighten the boundary and extend the solution by reflection.
However, a similar procedure can be used in case $\Omega=(0,1)^{N}$, by more straightforward 
reflection arguments. In particular, we can complement the results in~\cite[Proposition~5]{llnp},
obtaining that
\begin{equation}\label{eq:limite_teo}
\od(\delta)  = \frac14 I_{\mathcal M}\cdot\delta^{-2/N}+o(\delta^{-2/N})
\qquad\text{as }\delta\to0
\end{equation}
and that, defining $r_\pm(\delta)$ in such a way that $|B_{r_\pm(\delta)}|=2^N\delta(1\pm\eps)$, it results
\[
B_{r_-}(0)\cap\Omega \subset D_\delta \subset B_{r_+}(0)
\cap\Omega.
\]
\end{remark}

In the light of  the above considerations, one can wonder, when $\Omega$ is smooth, where the concentration points $P_{\delta}$ accumulate, as $\delta\to0$. A natural 
conjecture is that this should happen at points of maximal mean curvature, 
and there is a number of clues in this direction. First, as we mentioned, 
this is the case when $\Omega$ is an orthotope; secondly, the maximal mean curvature appears in the upper bound 
\eqref{eq:upperbound}; moreover, we obtained 
strong indications of such behavior in \cite{mapeve}, dealing 
with the double asymptotic $\beta\to+\infty $ and $\delta\to0$.
Actually, in the semilinear case, Ni and Takagi proved this property in \cite{nitakagi_duke},
by showing that the upper bound analogous to \eqref{eq:upperbound} is actually 
an exact expansion of the critical level. To this aim, the crucial step was a 
sharp and deep study of a linearized equation associated to the problem they 
study.
From this point of view, it is not clear how to extend such ideas in our context, 
as this should involve a ``linearization'' of the optimal set $D_\delta$.

The paper is organized as follows. In Section~\ref{sec:casoRN} we study the limit problem,
proving all the results involving $I_{\mathcal M}$ and in particular Theorem 
\ref{thm:limit_intro}. Section~\ref{sec:bfa} is devoted to the proof of the bound from 
above in Theorem \ref{thm:bound_intro} (equation \eqref{eq:upperbound}) and in 
Section~\ref{sec:blowup} we complete the proof of Theorems   
\ref{th:intro_D} and \ref{thm:bound_intro}.

\bigskip
\textbf{Notation.}
\begin{itemize}
\item $|\cdot|$ denotes the Lebesgue $N$ dimensional measure and $\Hcal^{N-1}(\cdot)$ the Hausdorff $N-1$ dimensional measure.
\item For a function $f$, its positive/negative parts are denoted as 
$f^\pm(x)=\max\{\pm f(x) , 0\}$.
\item The characteristic function of a set $E$ is denoted by $\ind{E}$. 
\item $B_r(x)$ denotes the ball of radius $r>0$ centered at $x\in \R^N$. If $x=0$, we often write $B_r=B_r(0)$. On the other hand, $B$ is the ball centered at the origin and with $|B|=1$.
\item We call $\omega_N = |B_1|$ the measure of a ball of radius $1$.
\item $B^{+}_{r}=B_{r}\cap \R^{N}_{+}$.
\item $H_P$ denotes the mean curvature of $\partial \Omega$ at $P\in\partial\Omega$, and $\widehat H = \max_{P\in\partial\Omega} H_P$.
\item For a sequence $(\delta_k)_k$, we write $P_k=P_{\delta_k}$, $w_k=w_{\delta_k}$, and so on.
\item $C,C_1,C',\dots$ denote any (non-negative) universal constant, which may also change 
from line to line.
\end{itemize}

\section{A spectral optimal design problem in 
\texorpdfstring{$\R^N$}{R N}}\label{sec:casoRN}

In this section we consider the minimization problem
\begin{equation}\label{eq:defIM}
I_{\mathcal M_k}:=\inf{\Big\{\mu(m) : m\in \mathcal M_k\Big\}},
\end{equation}
for the weighted eigenvalue
\begin{equation}\label{eq:defmu}
\mu(m):=\inf\left\{\frac{\int_{\R^N}|\nabla v|^2}{\int_{\R^N}mv^2} : v\in H^1(\R^N),\;\int_{\R^N}mv^2>0\right\},
\end{equation}
with $m$ a sign-changing weight belonging to the class
\begin{equation}\label{eq:defMk}
\mathcal M_{k}
:=\left\{m\in L^\infty(\R^N) : 
\begin{aligned}
&-\beta\leq m\leq 1 \text{ a.e. in $\R^N$, }\\
&\int_{\R^N}(m+\beta)\leq k(1+\beta)
\end{aligned}
\right\},
\end{equation}
where $k>0$, $\beta>0$ are fixed constants. Notice that $m\le 0$ a.e.\ implies $\mu(m)=+\infty$, thus the minimization can be restricted to the weights satisfying 
$|\{x\in \R^{N} : m>0\}|>0$.  
Morever, in general, if $m\in \mathcal M$, then
$m\not\in L^1(\R^N)$. Nonetheless, the auxiliary nonnegative weight $\widetilde m=m+\beta$ belongs to $L^{1}(\R^{N})$.
If $m=\ind{E}-\beta\ind{E^c}$ is bang-bang, then with a slight abuse of notation we write $\mu(m)=\mu(E)$.

Actually, the parameter $k$ can be easily scaled out, 
as the following remark shows.
\begin{remark}\label{rmk:scaling}
We notice that
\[
m\in \Mcal_{k} 
\qquad\iff\qquad
m_t(x):=m(t^{-\frac1N}x) \in \Mcal_{tk}.
\]
Furthermore, for $v\in H^1(\R^N)$ and $v_t(x):= v(t^{-1/N}x)$,
\[
\frac{\int_{\R^N}|\nabla v_t|^2}{\int_{\R^N}m_tv_t^2} = t^{-\frac2N} 
\frac{\int_{\R^N}|\nabla v|^2}{\int_{\R^N}mv^2} . 
\]
We deduce that
\[
I_{\mathcal M_{k_2}}=\left(\frac{k_2}{k_1}\right)^{-\frac2N}I_{\mathcal M_{k_1}}.
\]
Moreover, in case such values are achieved, the optimal weights and eigenfunctions 
scale as well. More precisely, 
\[
w_{[1]}\text{ achieves }I_{\Mcal_1}  
\qquad\iff\qquad
w_{[k]}(x):=w_{[1]}(k^{-\frac1N}x) \text{ achieves } I_{\Mcal_{k}}.
\]
We notice that, with this notation, equations \eqref{eq:coeff1} and \eqref{eq:limite_teo} entail respectively
\[
\od(\delta) \sim I_{{\Mcal}_2}\delta^{-2/N}\qquad\text{ and }
\od(\delta) \sim I_{{\Mcal}_{2^N}}\delta^{-2/N}.
\]
\end{remark}
In view of the previous remark, in the whole paper, when $k=1$ we drop the dependence on $k$ in the notation, as we mostly work with $I_{\mathcal M_{1}}=I_{\mathcal M}$. Our goal is to prove the following result. Analogously, we write $w=w_{[1]}$.
\begin{theorem}\label{thm:limit_pb}
The value $I_{\Mcal}$ is achieved, uniquely up to translations, by the weight
\[
m(x) = \ind{B} - \beta\ind{B^c},
\]
where $B$ denotes the ball of unit measure, with an associated positive eigenfunction $w\in C^{1,1}(\R^N)$ solving $-\Delta w = I_\Mcal m w$. Namely, $I_{\mathcal M}=\mu(B)$. Moreover, 
$w$ is radially symmetric, decreasing in $r=|x|$, and such that
\begin{equation}\label{eq:exp_decay}
w(r) = C_1 r^{-\frac{N-1}2}e^{-\sqrt{\mu\beta}r}\left(1+O\left(r^{-1}\right)\right),
\qquad w'(r) = C_2 r^{-\frac{N-1}2}e^{-\sqrt{\mu\beta}r}\left(1+O\left(r^{-1}\right)\right)
\end{equation}
as $r\to+\infty$, for suitable constants $C_1,C_2$.
\end{theorem}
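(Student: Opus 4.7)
My plan follows the three-step scheme sketched after Theorem~\ref{thm:limit_intro}: I first reduce to bang-bang weights via the bathtub principle, then symmetrise to reduce to the unit ball, and finally establish existence together with the qualitative properties of the optimiser.

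\textbf{Step 1 (bang-bang reduction).} Writing $\widetilde m := m+\beta\in[0,1+\beta]$, for any fixed $v\in H^{1}(\R^{N})$ the map $\widetilde m\mapsto \int\widetilde m\,v^{2}$ is linear, and the bathtub principle produces a maximiser, under the constraint $\int\widetilde m\le 1+\beta$, of the form $\widetilde m_{v}=(1+\beta)\ind{A_{v}}$, where $A_{v}$ is a super-level set of $v^{2}$ of measure exactly $1$ (the volume constraint is saturated, since enlarging $\{m=1\}$ only increases the integral). Since $\int m v^{2}=\int\widetilde m\,v^{2}-\beta\int v^{2}$, interchanging the two infima shows
\[
I_{\Mcal}=\inf\Big\{\mu(\ind{A}-\beta\ind{A^{c}}) : A\subset\R^{N},\ |A|=1\Big\}.
\]

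\textbf{Step 2 (symmetrisation).} Given $A$ with $|A|=1$ and $v\in H^{1}(\R^{N})$ with $\int m v^{2}>0$, let $v^{*}$ denote the Schwarz symmetrisation of $|v|$ and let $B$ be the ball of measure $1$ centred at the origin. P\'olya--Szeg\H{o} yields $\|\nabla v^{*}\|_{L^{2}}\le\|\nabla v\|_{L^{2}}$, while $\|v^{*}\|_{L^{2}}=\|v\|_{L^{2}}$ together with Hardy--Littlewood (using $\ind{A}^{*}=\ind{B}$ and $(v^{2})^{*}=(v^{*})^{2}$) give $\int_{A}v^{2}\le \int_{B}(v^{*})^{2}$, so
\[
\int(\ind{B}-\beta\ind{B^{c}})(v^{*})^{2}=-\beta\|v^{*}\|_{L^{2}}^{2}+(1+\beta)\int_{B}(v^{*})^{2}\ge \int m v^{2}>0,
\]
and therefore $\mu(B)\le \mu(A)$. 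Thus $I_{\Mcal}=\mu(B)$; the equality case of P\'olya--Szeg\H{o} applied to an eventual second optimiser further pins down uniqueness up to translation of the weight.

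\textbf{Step 3 (existence, regularity, shape, decay).} For $\mu(B)$, I take a minimising sequence $v_{n}$ normalised by $\int_{B}v_{n}^{2}=1$. The identity
\[
\int|\nabla v_{n}|^{2}=(I_{\Mcal}+o(1))\Big(1-\beta\int_{B^{c}}v_{n}^{2}\Big)
\]
forces $\int_{B^{c}}v_{n}^{2}< \beta^{-1}$ and bounds $\int|\nabla v_{n}|^{2}$, so $(v_{n})$ is bounded in $H^{1}(\R^{N})$. Up to a subsequence $v_{n}\rightharpoonup w$, and the Rellich embedding on the bounded set $B$ preserves $\int_{B}w^{2}=1$, ruling out vanishing; weak lower semicontinuity on the two integrals then makes $w$ a minimiser. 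Replacing $w$ by $|w|$ and applying the strong maximum principle to the Euler--Lagrange equation $-\Delta w=I_{\Mcal}(\ind{B}-\beta\ind{B^{c}})w$ gives positivity. Simplicity of the principal eigenvalue (its positive eigenfunction being unique up to a scalar multiple) together with the rotational invariance of the weight forces $w$ to be radial; comparing $w$ with its rearrangement and invoking the equality case of P\'olya--Szeg\H{o} yields that $w$ is radially decreasing. Elliptic regularity applied to $\Delta w\in L^{\infty}$ gives $w\in W^{2,p}_{\loc}$ for every $p<\infty$, and the pointwise bound $\|\Delta w\|_{L^{\infty}}\le I_{\Mcal}\|w\|_{L^{\infty}}$ upgrades this to $w\in C^{1,1}(\R^{N})$. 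Finally, outside $B$ the radial profile solves the modified Bessel equation
\[
w''+\tfrac{N-1}{r}w'-I_{\Mcal}\beta w=0,
\]
whose unique $L^{2}(r^{N-1}dr)$ solution is a multiple of $r^{-(N-2)/2}K_{(N-2)/2}(\sqrt{I_{\Mcal}\beta}\,r)$, and the classical asymptotic expansion of $K_{\nu}$ at infinity yields~\eqref{eq:exp_decay}.

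\textbf{Main obstacle.} The most delicate point is the compactness in Step~3: since $v_{n}$ lives on all of $\R^{N}$ while $m^{+}=\ind{B}$ has compact support, $H^{1}$-boundedness alone does not rule out mass escaping to infinity. The interplay between the normalisation $\int_{B}v_{n}^{2}=1$ and Rellich compactness on the bounded favourable region $B$ is precisely what keeps the weak limit $w$ in the feasibility set $\{\int m w^{2}>0\}$ and unlocks the lower-semicontinuity closure.
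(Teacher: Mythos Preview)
Your proposal is correct and follows essentially the same approach as the paper: the bathtub reduction to bang-bang weights, Schwarz symmetrisation to the unit ball, compactness via Rellich on the bounded favourable set $B$, and exponential decay via the modified Bessel function $K_{(N-2)/2}$ match Proposition~\ref{prop:bathtub} and Lemmas~\ref{le:infbangbang}, \ref{lem:pallaottimapblimite}, \ref{le:existeigenfunction}, \ref{le:decaypsialto} respectively. The only minor deviation is that you deduce radiality of $w$ from simplicity of the principal eigenvalue rather than directly from the Brothers--Ziemer equality case as in Remark~\ref{rem:palle}.
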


The remaining part of this section is devoted to the proof of Theorem  \ref{thm:limit_pb}.

The first step, quite standard in this type of problems, is to reduce to bang-bang weights. 
To this aim, we use the so called \emph{bathtub principle}, see e.g.~\cite[Theorem~1.14]{lilo}. Since here we need a formulation which is slightly different from the usual one, we provide a proof.
\begin{proposition}[bathtub principle]\label{prop:bathtub}
Let $f\in L^1(\R^N)$ be a nonnegative function. 
Then, the problem\[
\sup_{m\in \mathcal M} \int_{\R^N}mf,
\] 
is solved by a weight $m_{o}(x)=\ind{D}(x)-\beta\ind{D^c}(x)$, for a measurable set $\{f> t\}\subset D\subset \{f\geq t\}$,
with \[
t:=\inf\Big\{s\in \R : |\{f> s\}|\leq 1\Big\}
\qquad
\text{and}
\qquad
|D|=1.
\]
\end{proposition}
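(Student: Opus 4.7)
The natural move is to pass from $m$ to the nonnegative weight $\widetilde m := m+\beta$, which satisfies $0\le \widetilde m\le 1+\beta$ and $\int_{\R^N}\widetilde m\le 1+\beta$. Since
\[
\int_{\R^N} m f = \int_{\R^N}\widetilde m\, f - \beta\int_{\R^N}f
\]
and $\beta\int f$ is a fixed constant, the problem reduces to maximizing $\int \widetilde m\,f$ subject to the pointwise bounds and the total mass constraint. This is exactly the setting of the classical bathtub principle (up to a rescaling by $1+\beta$), and the candidate optimizer $\widetilde m_o = (1+\beta)\ind{D}$ corresponds to placing all available ``mass'' on the super-level sets of $f$.

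First I would verify that $D$ with the required properties actually exists. Since $f\ge 0$ and $s\mapsto |\{f>s\}|$ is non-increasing with $|\{f>s\}|=\infty$ for $s<0$, the threshold $t$ is nonnegative; and since $f\in L^1$, Chebyshev gives $|\{f>s\}|\to 0$ as $s\to\infty$, so $t<\infty$. Continuity from below of measure along $\{f>s\}\uparrow\{f>t\}$ as $s\downarrow t$ yields $|\{f>t\}|\le 1$, while continuity from above along $\{f>s\}\downarrow \{f\ge t\}$ as $s\uparrow t$ gives $|\{f\ge t\}|\ge 1$. Consequently one can choose a measurable $D$ sandwiched between these two level sets with $|D|=1$.

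Next I would establish optimality by a direct pointwise comparison argument. For an arbitrary competitor $m\in\Mcal$, writing $\widetilde m_o=(1+\beta)\ind{D}$, I split
\[
\int_{\R^N}(m_o - m)f = \int_{\R^N}(\widetilde m_o-\widetilde m)(f-t)\,dx + t\int_{\R^N}(\widetilde m_o-\widetilde m)\,dx.
\]
On $\{f>t\}\subset D$ one has $\widetilde m_o = 1+\beta\ge \widetilde m$ and $f-t>0$; on $\{f<t\}\subset D^c$ one has $\widetilde m_o=0\le \widetilde m$ and $f-t<0$; on $\{f=t\}$ the first integrand vanishes. Hence the first term is nonnegative. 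The second term is also nonnegative because $t\ge 0$ and $\int \widetilde m_o = (1+\beta)|D| = 1+\beta\ge \int \widetilde m$.

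The argument is essentially a rearrangement inequality in disguise, so I do not anticipate serious obstacles; the only point that requires some care is the existence of $D$ with $|D|=1$ sandwiched between $\{f>t\}$ and $\{f\ge t\}$, which relies on the continuity-of-measure properties of the distribution function and on $f\in L^1(\R^N)$ to ensure $t$ is finite.
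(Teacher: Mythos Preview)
Your proof is correct and takes a genuinely different route from the paper's. The paper uses the layer-cake representation
\[
\int_{\R^N} mf = \int_0^\infty\!\!\left(\int_{\R^N}\ind{\{f>s\}}\,m\right)ds
\]
and then verifies, separately for $s>t$ and for $s<t$, that $\int_{\{f>s\}} m \le \int_{\{f>s\}} m_o$ at each level $s$; the $s<t$ case is handled precisely via the same shift to $\widetilde m = m+\beta$ that you introduce at the outset. Your argument bypasses the layer-cake formula entirely: the splitting
\[
\int_{\R^N}(\widetilde m_o-\widetilde m)f = \int_{\R^N}(\widetilde m_o-\widetilde m)(f-t)\,dx \;+\; t\int_{\R^N}(\widetilde m_o-\widetilde m)\,dx
\]
packages the whole comparison into a single pointwise sign check plus the mass constraint. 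This is cleaner and shorter; the layer-cake approach has the minor advantage of making the role of each super-level set explicit, which is closer in spirit to how the result is typically stated in references such as Lieb--Loss. Two small points worth tightening in your write-up: the continuity-from-above step $\{f>s\}\downarrow\{f\ge t\}$ as $s\uparrow t$ uses that $|\{f>s\}|<\infty$ for $s>0$ (which you have from Chebyshev) and needs the trivial observation $|\{f\ge 0\}|=|\R^N|\ge 1$ when $t=0$; and it is worth stating explicitly that $m_o\in\Mcal$, though this is immediate from $|D|=1$.
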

\begin{proof}
The intuitive idea of the bathtub principle is to consider a weight of the form $m(x)=\ind{\{f>t\}}(x)-\beta\ind{\{f\leq t\}}(x)$, with \[
t:=\inf\Big\{s\in \R : |\{f> s\}|\leq 1\Big\}.
\]
If $|\{f>t\}|<1$, we need to take a set $A\subset \{f= t\}$ such that 
\[
|\{f>t\}\cup A|=1.
\]
To check that the choice of such a set $A$ is possible, it is enough to note that, by the definition of $t$ as an infimum, for all $\vartheta>0$\[
|\{f>t-\vartheta\}|>1,\qquad \text{hence}\qquad |\{f>t-\vartheta\}|-|\{f>t\}|\geq 1-|\{f>t\}|,
\]
and passing to the limit as $\vartheta\to 0$, we infer that \[
|\{f=t\}|\geq 1-|\{f>t\}|,
\]
so that an appropriate set $A$ exists. On the other hand, if $|\{f>t\}|=1$, then $\{f>t\}$ is 
already a good candidate and we choose $A=\emptyset$. In both cases, we define 
\[
D:=\{f>t\}\cup A,\qquad m_{o}(x)=\ind{D}(x)-\beta\ind{D^c}(x),\quad x\in \R^N.
\]

Recalling \eqref{eq:defMk} it is easy to check that $m_{o}\in \mathcal M$, as the measure constraint on $\{m_{o}>0\}$ follows from the definition (this also implies that the weight is sign-changing), as well as the bounds from above and from below. Moreover, the integral constraint 
\[
\int_{\R^N}(m_{o}+\beta)=1+\beta,
\]
is satisfied as well.

Finally, we check that $m_{o}$ is actually an optimal weight. To do this, we use the layer-cake representation (Talenti formula) and Fubini theorem, to write \[
\int_{\R^N}m(x)f(x)\,dx=\int_{0}^{+\infty}\left(\int_{\R^N}\ind{\{f>s\}}(x)m(x)\,dx\right)\,ds.
\]
Then the claim follows if we prove that for almost all $s>0$,
\begin{equation}\label{eq:layercakestima}
\int_{\R^N}\ind{\{f>s\}}(x)m(x)\,dx\leq \int_{\R^N}\ind{\{f>s\}}(x)m_{o}(x)\,dx,\qquad \text{for all }m\in \mathcal M.
\end{equation}
We note that, if $s>t$, 
it results $\{f>s\}\subset \{f>t\}\subset  D$ and,  as $m_{o}=1$ on $D$,
we get
\[
\int_{\R^N}\ind{\{f>s\}}(x)m(x)\,dx\leq \int_{\R^N}\ind{\{f>s\}}\,dx=\int_{\R^N}\ind{\{f>s\}}(x)m_{o}(x)\,dx,\qquad \text{for all }m\in \mathcal M.
\]
On the other hand, if $s<t$, one needs to be more careful. First of all, for every 
$m\in \mathcal M$ we write $\widetilde m=m+\beta$, so that $\widetilde m$ is a nonnegative 
function belonging to $L^{1}(\R^{N})$. We observe that, as $f$ is $L^1$, $|\{f>s\}|<+\infty$ for all $s>0$. Then proving 
\[
\int_{\R^N}\ind{\{f> s\}}(x)\widetilde m_{o}(x)\,dx\geq \int_{\R^N}\ind{\{f> s\}}(x)\widetilde m(x)\,dx,
\]
is equivalent to prove 
\[
\int_{\R^N}\ind{\{f> s\}}(x) m_{o}(x)\,dx\geq \int_{\R^N}\ind{\{f> s\}}(x) m(x)\,dx,
\]
for all $m\in \mathcal M$.
We first observe that \[
\begin{split}
\int_{\R^N}\widetilde m_{o}=(1+\beta)|D|= 1+\beta\geq \int_{\R^N}\widetilde m,\qquad \text{for all }\widetilde m\in \mathcal M+\beta,\\
\int_{\R^N}\ind{\{f\leq s\}}\widetilde m_{o}=0\leq \int_{\R^N}\ind{\{f\leq s\}}\widetilde m,\qquad \text{for all }\widetilde m\in \mathcal M+\beta.
\end{split}
\]
All in all, we have \[
\begin{split}
&\int_{\R^N}\ind{\{f> s\}}(x)\widetilde m_{o}(x)\,dx=\int_{\R^N}\widetilde m_{o}-\int_{\R^N}\ind{\{f\leq s\}}\widetilde m_{o}\\
&\geq \int_{\R^N}\widetilde m-\int_{\R^N}\ind{\{f\leq s\}}\widetilde m=\int_{\R^N}\ind{\{f> s\}}(x)\widetilde m(x)\,dx,\qquad \text{for all }\widetilde m\in \mathcal M+\beta.
\end{split}
\]
Putting all the information above together, we conclude that \[
\begin{split}
&\int_{\R^N}m(x)f(x)\,dx=\int_{0}^{+\infty}\left(\int_{\R^N}\ind{\{f>s\}}(x)m(x)\,dx\right)\,ds\\
&\leq \int_{0}^{+\infty}\left(\int_{\R^N}\ind{\{f>s\}}(x)m_{o}(x)\,dx\right)\,ds=\int_{\R^N}m_{o}(x)f(x)\,dx,
\end{split}
\]
for all $m\in \mathcal M$, and the proof is finished.
\end{proof}
We can now show that the minimization in \eqref{eq:defIM} is equivalent to the minimization 
among bang-bang weights. Introducing the class of admissible favorable sets 
\[
\mathcal A:=\Big\{A\subset \R^N : \text{$A$ is measurable and }0<|A|\leq 1\Big\},
\]
we note that the optimal set $D$ provided by the bathtub principle is contained in the class $\mathcal A$; on the other hand, the weight $\ind{A}-\beta\ind{A^c} \in \mathcal M$ for every $A\in\Acal$. With a slight abuse of notation, we write 
\[
\mu(A)=\mu(\ind{A}-\beta\ind{A^c}),\qquad \text{for all }A\in \mathcal A.
\]
\begin{lemma}\label{le:infbangbang}
We have  
\[
I_{\mathcal M}=I_{\mathcal A}:=\inf{\Big\{\mu(A) : A\in \mathcal A\Big\}}.
\]
\end{lemma}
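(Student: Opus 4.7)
The plan is to prove the two inequalities separately. The inclusion $I_{\Mcal}\le I_{\Acal}$ is immediate: for every $A\in\Acal$, the bang-bang weight $\ind{A}-\beta\ind{A^c}$ lies in $\Mcal$, since the pointwise bounds hold and $\int_{\R^N}(\ind{A}-\beta\ind{A^c}+\beta)=(1+\beta)|A|\le 1+\beta$. Therefore $\mu(A)\ge I_{\Mcal}$ for every such $A$, and passing to the infimum yields $I_{\Acal}\ge I_{\Mcal}$.

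For the reverse inequality, I would fix an arbitrary $m\in\Mcal$ with $\mu(m)<+\infty$ and pick any admissible test function $v\in H^1(\R^N)$ with $\int_{\R^N}mv^2>0$. Since the Sobolev embedding gives $v\in L^2(\R^N)$, the function $f:=v^2$ is nonnegative and lies in $L^1(\R^N)$, so Proposition \ref{prop:bathtub} applies and produces a measurable set $D_v$ with $|D_v|=1$, hence $D_v\in\Acal$, such that the bang-bang weight $m_v:=\ind{D_v}-\beta\ind{D_v^c}$ satisfies
\[
\int_{\R^N} m_v v^2 \;\ge\; \int_{\R^N} m v^2 \;>\; 0.
\]
In particular, $v$ is an admissible competitor for the Rayleigh quotient defining $\mu(D_v)$, so
\[
\frac{\int_{\R^N}|\nabla v|^2}{\int_{\R^N}m v^2}\;\ge\;\frac{\int_{\R^N}|\nabla v|^2}{\int_{\R^N}m_v v^2}\;\ge\;\mu(D_v)\;\ge\;I_{\Acal}.
\]
Taking the infimum over $v$ admissible for $\mu(m)$ yields $\mu(m)\ge I_{\Acal}$, and then taking the infimum over $m\in\Mcal$ gives $I_{\Mcal}\ge I_{\Acal}$, closing the argument.

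No step here is really an obstacle: the only point requiring minimal care is the applicability of the bathtub principle, which needs the nonnegative function $v^2$ to lie in $L^1(\R^N)$, and this is free from $H^1(\R^N)\subset L^2(\R^N)$. Notice that the set $D_v$ produced by the bathtub principle does depend on the test function $v$, but this causes no trouble because it enters only as a lower bound by $I_{\Acal}$ before taking the infimum over $v$ and then over $m$.
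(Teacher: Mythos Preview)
Your proof is correct and follows essentially the same approach as the paper: both directions use the bathtub principle (Proposition \ref{prop:bathtub}) to replace an arbitrary admissible weight by a bang-bang one with larger denominator. The only cosmetic difference is that the paper packages the double infimum via a single $\eps$-approximation, whereas you bound each Rayleigh quotient from below and then take the two infima in sequence; also, $v\in L^2(\R^N)$ is immediate from the definition of $H^1(\R^N)$ rather than from a Sobolev embedding.
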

\begin{proof}
First of all, we notice that the claim can be rewritten as an equality of two \emph{inf-inf}:\[
\begin{split}
&\inf{\left\{\inf\left\{\frac{\int_{\R^N}|\nabla v|^2}{\int_{\R^N}mv^2} : v\in H^1(\R^N),\;\int_{\R^N}mv^2>0\right\} : m\in \mathcal M\right\}}\\
&=\inf{\left\{ \inf\left\{\frac{\int_{\R^N}|\nabla v|^2}{\int_{\R^N}(\ind{A}-\beta\ind{A^c})v^2} : v\in H^1(\R^N),\;\int_{\R^N}(\ind{A}-\beta\ind{A^c})v^2>0\right\}: A\in \mathcal A\right\}}.
\end{split}
\]
Since $\ind{A}-\beta\ind{A^c} \in \mathcal M$ for all $A\in {\mathcal A}
$, it is clear that $I_{\mathcal M}\leq I_{\mathcal A}$, hence we can focus on the opposite inequality.
For any $\eps>0$ we can find $m_\eps\in \mathcal M$ and $\psi_\eps\in H^1(\R^N)$ with $\int_{\R^N}m_\eps\psi_\eps^2>0$, such that 
\[
I_{\mathcal M}\geq \frac{\int_{\R^N}|\nabla \psi_\eps|^2}{\int_{\R^N}m_\eps\psi_\eps^2}-\eps.
\]
Then, thanks to the bathtub principle, we have \[
\frac{\int_{\R^N}|\nabla \psi_\eps|^2}{\int_{\R^N}m_\eps\psi_\eps^2}\geq \frac{\int_{\R^N}|\nabla \psi_\eps|^2}{\displaystyle{\sup_{m\in \mathcal M}}\textstyle\int_{\R^N}m\psi_\eps^2}=\frac{\int_{\R^N}|\nabla \psi_\eps|^2}{\int_{\R^N}(\ind{D}-\ind{D^c})\psi_\eps^2},
\]
for some $D\in \mathcal A$.
Noting that  
\[
\int_{\R^N}(\ind{D}-\ind{D^c})\psi_\eps^2\geq \int_{\R^N}m_\eps\psi_\eps^2>0,
\]
we can infer 
\[
I_{\mathcal M}\geq \frac{\int_{\R^N}|\nabla \psi_\eps|^2}{\int_{\R^N}(\ind{D}-\ind{D^c})\psi_\eps^2}-\eps\geq \mu(D)-\eps\geq I_{\mathcal A}-\eps,
\]
and since $\eps$ is arbitrary we conclude the proof.
\end{proof}

In order to solve the minimization problem, it is then enough to work on the case of bang-bang weights, where the Schwarz symmetrization comes to our rescue.
\begin{lemma}\label{lem:pallaottimapblimite}
We have 
\[
I_{\mathcal A}=\mu(B).
\]
\end{lemma}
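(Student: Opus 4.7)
The plan is to prove the two inequalities $I_{\Acal}\leq \mu(B)$ and $\mu(B)\leq I_{\Acal}$ separately. The first one is immediate, since $B\in \Acal$. For the nontrivial direction, I would invoke Schwarz symmetrization, in the same spirit as in the classical Faber-Krahn inequality, but taking care of the sign-changing structure of the weight.

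More precisely, fix $A\in\Acal$ and, for any $\eps>0$, let $v\in H^1(\R^N)$ be an $\eps$-almost minimizer in \eqref{eq:defmu} for the weight $\ind{A}-\beta\ind{A^c}$, so in particular $\int_{\R^N}(\ind{A}-\beta\ind{A^c})v^2>0$. Denote by $v^*$ the symmetric decreasing rearrangement of $|v|$. Polya-Szego handles the Dirichlet energy in the numerator: $\int_{\R^N}|\nabla v^*|^2\leq \int_{\R^N}|\nabla v|^2$. For the denominator, I would rewrite
\[
\int_{\R^N}(\ind{A}-\beta\ind{A^c})v^2=(1+\beta)\int_A v^2-\beta\int_{\R^N}v^2,
\]
and analogously for $v^*$ and $B$. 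Since $\int_{\R^N}(v^*)^2=\int_{\R^N}v^2$ by equimeasurability, the comparison reduces to bounding $\int_A v^2$ from above by $\int_B(v^*)^2$. By the Hardy-Littlewood rearrangement inequality, $\int_A v^2\leq \int_{A^*}(v^*)^2$, where $A^*$ is the ball centered at the origin with $|A^*|=|A|\leq 1=|B|$. Thus $A^*\subseteq B$, and since $(v^*)^2\geq 0$ we conclude $\int_A v^2\leq \int_B(v^*)^2$.

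Combining these estimates yields
\[
\int_{\R^N}(\ind{B}-\beta\ind{B^c})(v^*)^2\geq \int_{\R^N}(\ind{A}-\beta\ind{A^c})v^2>0,
\]
so $v^*$ is an admissible competitor for $\mu(B)$ and the Rayleigh quotient can only decrease:
\[
\mu(B)\leq \frac{\int_{\R^N}|\nabla v^*|^2}{\int_{\R^N}(\ind{B}-\beta\ind{B^c})(v^*)^2}\leq \frac{\int_{\R^N}|\nabla v|^2}{\int_{\R^N}(\ind{A}-\beta\ind{A^c})v^2}\leq \mu(A)+\eps.
\]
Letting $\eps\to 0$ gives $\mu(B)\leq \mu(A)$, and since $A\in\Acal$ was arbitrary, $\mu(B)\leq I_{\Acal}$.

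The only potentially delicate point is making sure the new denominator is strictly positive; but this is automatic from the chain of inequalities above, which shows it is not smaller than the original (positive) denominator. No real obstacle is expected here, since both ingredients, Polya-Szego and Hardy-Littlewood, act cooperatively: the former shrinks the numerator, while the combination of the latter with the volume constraint $|A|\leq |B|$ enlarges the denominator.
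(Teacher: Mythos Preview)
Your proof is correct and follows essentially the same route as the paper: Schwarz symmetrization, P\'olya--Szeg\H{o} for the numerator, and the Hardy--Littlewood/Riesz rearrangement inequality (applied to the shifted weight) together with equimeasurability for the denominator. The only cosmetic difference is that the paper first compares $\mu(A)$ with $\mu(A^*)$ and then invokes the monotonicity $\mu(A^*)\ge\mu(B)$, whereas you merge these two steps by enlarging $A^*$ to $B$ directly at the level of the integral $\int_{A^*}(v^*)^2\le\int_B(v^*)^2$; the underlying inequalities are identical.
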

\begin{proof}
The proof of this fact is based on the Schwarz symmetrization. 
Let $D\in \mathcal A$ and $m_{D}:=\ind{D}-\beta\ind{D^c}$. For any $\eps>0$ 
we can find $\psi_\eps\in H^1(\R^N)$ with 
\[
\int_{\R^N}m_{D}\psi_\eps^2>0
\qquad
\text{and}\qquad
\mu(D)\geq \frac{\int_{\R^N}|\nabla \psi_\eps|^2}{\int_{\R^N}m_{D}\psi_\eps^2}-\eps.
\]
We denote by $(D^*,\psi_{\eps}^*)$ the Schwarz rearrangement of $(D,\psi_\eps)$. 
Since $m_D$ is piecewise constant, its Schwarz 
rearrangement may be defined as
\begin{equation}\label{eq:defmstar}
m_{D}^*:=\ind{D^*}-\beta\ind{(D^*)^c}=m_{D^{*}}= (m_{D}+\beta)^{*}-\beta\in\Mcal.
\end{equation}
By the P\'olya-Szeg\"o inequality, we have \[
\int_{\R^N}|\nabla \psi_\eps|^2\geq \int_{\R^N}|\nabla \psi_{\eps}^*|^2.
\]
On the other hand, the denominator in the Rayleigh quotient is a little more complicated to treat.
First of all, we apply the Riesz rearrangement inequality~\cite[Theorem~3.4]{lilo} to $m_{D}+\beta$ and $\psi_\eps^2$, which are admissible as they are nonnegative and their positive superlevels have finite measure.
This and \eqref{eq:defmstar} entail
\[
\int_{\R^N}(m_{D}+\beta)\psi_\eps^2\leq \int_{\R^N}(m_{D}+\beta)^*\left(\psi_\eps^2\right)^*=\int_{\R^N}(m_{D}^*+\beta)\left(\psi_{\eps}^*\right)^2,
\]
where we used the properties of the Schwarz rearrangement.
Since $\|\psi_\eps\|_{L^2}=\|\psi_\eps^*\|_{L^2}$, \eqref{eq:defmstar} implies 
\[
\int_{\R^N} m_{D}\psi_{\eps}^2\leq \int_{\R^N} m_{D}^*\left(\psi_{\eps}^*\right)^2=\int_{\R^N} m_{D^{*}}\left(\psi_{\eps}^*\right)^2,
\]
yielding
\[
\mu(D)\geq\frac{\int_{\R^N}|\nabla \psi_\eps|^2}{\int_{\R^N} m_{D}\psi_\eps^2}
-\eps\geq \frac{\int_{\R^N}|\nabla \psi_{\eps}^*|^2}{\int_{\R^N} m_{D^*}\!\left(\psi_{\eps}^*\right)^2}-\eps\geq \mu(D^*)-\eps\geq \mu(B)-\eps,
\]
and the conclusion follows since $\eps>0$ and $D\in\Acal$ are arbitrary.
\end{proof}
Next we show that $\mu(B)$ is achieved. Actually, this can be done for any  bounded open set  $E\subset \R^N$.
\begin{lemma}\label{le:existeigenfunction}
Let $E\subset \R^N$ be an open and bounded set, $E\in \mathcal A$, and $m_E:=\ind{E}-\beta\ind{E^c}\in \mathcal M$. There exists an eigenfunction $w\in H^1(\R^N)$ corresponding to the principal eigenvalue $\mu(E)=\mu(m_E)$, that is,
\begin{equation}\label{eq:infmu}
\mu(E)=\inf\left\{\frac{\int_{\R^N}|\nabla v|^2}{\int_{\R^N}m_Ev^2} : v\in H^1(\R^N),\;\int_{\R^N}m_Ev^2>0\right\}=\frac{\int_{\R^N}|\nabla w|^2}{\int_{\R^N}m_E w^2}.
\end{equation}
\end{lemma}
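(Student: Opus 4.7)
The plan is to apply the direct method of the calculus of variations. The only nontrivial point is compactness on the unbounded domain $\R^N$, and I would exploit the structural advantage that $m_E = -\beta < 0$ outside the bounded set $E$: the constraint $\int m_E v^2 > 0$ forces the mass of $v$ to be concentrated on $E$, which, combined with the Rellich compactness on bounded sets, is enough to pass to the limit. Concretely, I would take a minimizing sequence $v_n \in H^1(\R^N)$ normalized by $\int_{\R^N} m_E v_n^2 = 1$, so that $\int_{\R^N}|\nabla v_n|^2 \to \mu(E)$.

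The first step is to secure an $H^1(\R^N)$ bound on $(v_n)$. In dimension $N\ge 3$ this is immediate: by Sobolev combined with H\"older on $E$ (which is bounded) one has $\int_E v_n^2 \le |E|^{2/N}\|v_n\|_{L^{2N/(N-2)}}^2 \le C\|\nabla v_n\|_{L^2}^2 \le C$, and then the normalization $\int_E v_n^2 - \beta \int_{E^c} v_n^2 = 1$ gives $\int_{E^c} v_n^2 \le C$ as well. For general $N$ I would argue by contradiction: if $\int_E v_n^2 \to \infty$, the rescalings $\tilde v_n := v_n/(\int_E v_n^2)^{1/2}$ satisfy $\int_E \tilde v_n^2 = 1$, $\int|\nabla \tilde v_n|^2 \to 0$ and $\int_{E^c}\tilde v_n^2 \to 1/\beta$; hence up to a subsequence $\tilde v_n \rightharpoonup \tilde w$ in $H^1(\R^N)$, and $\int|\nabla \tilde w|^2=0$ together with $\tilde w \in L^2(\R^N)$ forces $\tilde w \equiv 0$. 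But Rellich would then give $\tilde v_n \to 0$ in $L^2(E)$, contradicting $\int_E \tilde v_n^2 = 1$.

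With the bound in hand, extract $v_n \rightharpoonup w$ weakly in $H^1(\R^N)$ and strongly in $L^2_{\mathrm{loc}}$, in particular in $L^2(E)$. Setting $a := \int_E w^2 = \lim \int_E v_n^2$, the normalization forces $\int_{E^c} v_n^2 \to (a-1)/\beta$, while Fatou on $E^c$ gives $\int_{E^c} w^2 \le (a-1)/\beta$. Therefore
\[
\int_{\R^N} m_E w^2 = \int_E w^2 - \beta \int_{E^c} w^2 \ge a - \beta\cdot\frac{a-1}{\beta} = 1 > 0,
\]
and by weak lower semicontinuity $\int|\nabla w|^2 \le \mu(E)$, whence
\[
\mu(E) \le \frac{\int|\nabla w|^2}{\int m_E w^2} \le \frac{\mu(E)}{1} = \mu(E),
\]
so both inequalities are equalities and $w$ achieves $\mu(E)$. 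The main obstacle is really Step~1 in low dimension; once the asymmetric weight is used to combine strong $L^2$-convergence on $E$ with Fatou on $E^c$, the rest is the standard direct method.
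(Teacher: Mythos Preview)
Your proof is correct and follows the same direct-method strategy as the paper: Rellich compactness on the bounded set $E$, weak lower semicontinuity of the $L^2$ norm on $E^c$, and weak lower semicontinuity of the Dirichlet energy. The one difference is the normalization: the paper fixes $\int_E w_n^2 = 1$ rather than $\int_{\R^N} m_E v_n^2 = 1$. With that choice the constraint $\int_{\R^N} m_E w_n^2 > 0$ immediately gives $\int_{E^c} w_n^2 \le 1/\beta$, and the gradient bound follows from $\int_{\R^N}|\nabla w_n|^2 \le (\mu(E)+1)\int_{\R^N} m_E w_n^2 \le \mu(E)+1$, so the $H^1$ bound is free in every dimension without any Sobolev inequality or contradiction argument. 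Your Step~1 works, but the detour through the $N\ge 3$ case and the rescaled contradiction for low dimension is unnecessary once you pick the cleaner normalization.
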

\begin{proof}
Taking any $v\in H^1_0(E)$, it is immediate to see that $\mu(E)<+\infty$. Let $w_n$ be a minimizing sequence for~\eqref{eq:infmu}. Without loss of
generality we can suppose that
\[
0<\int_{\R^N}m_E w_n^2\le \int_{E}w_n^2=1,\qquad\text{for all }n\in \N.
\]
Then it is easy to check that
\[
\int_{E^c}w_n^2\leq \frac{1}{\beta},\qquad \int_{\R^N}|\nabla w_n|^2 \le (\mu(E)+1)\int_{\R^N}m_E w_n^2\le \mu(E)+1,
\qquad \text{for $n$ large}.
\]
Hence\[
1\leq \|w_n\|^2_{L^2(\R^N)}\leq 1+\frac{1}{\beta}\qquad \text{and}\qquad \|w_n\|^2_{H^1(\R^N)}\leq C,\qquad \text{for all }n\in\N,
\]
for some constant $C>0$ independent of $n$.
Therefore, passing to a (nonrelabeled) subsequence, we have \[
w_n\rightharpoonup w,\qquad\text{weakly in }H^1(\R^N),\text{ and strongly in }L^2(E).
\]
Hence, $w\not\equiv 0$, as \[
1=\lim_{n\to +\infty}\int_{E}w_n^2=\int_Ew^2.
\]
On the other hand, by lower semicontinuity of the norm with respect to the weak convergence, \[
\int_{E^c}w^2\leq \liminf_{n\to+\infty}\int_{E^c}w_n^2.
\]
All in all, \[
\int_{\R^N}m_E w^2\geq \lim_{n\to+\infty}\int_E w_n^2-\beta\liminf_{n\to+\infty}\int_{E^c}w_n^2\geq \limsup_{n\to+\infty}\int_{\R^N} m_E w_n^2.
\]
The weak lower semicontinuity of the $L^{2}$ norm of the gradient allows us to conclude
\[
\frac{\int_{\R^N}|\nabla w|^2}{\int_{\R^N}m_E w^2}\leq\liminf_{n\to+\infty}\frac{\int_{\R^N}|\nabla w_n|^2}{\int_{\R^N}m_E w_n^2}=\mu(E),
\]
so the claim is proved.
\end{proof}
\begin{remark}\label{rem:palle}
Notice that, by the equation, the critical set of any optimal eigenfunction has zero measure. 
Taking into account the characterization of the equality cases in the P\'olya-Szeg\"o
inequality \cite{MR929981}, one deduces that the ball is the unique minimizer for 
$I_{\mathcal M}$, and its principal eigenfunction is radial.
\end{remark}
To conclude the proof of Theorem  \ref{thm:limit_pb}, we study the decay of the optimal eigenfunction at infinity.
\begin{lemma}\label{le:decaypsialto}
Let $w=w(r)$ be the principal eigenfunction associated to $\mu(B)$. Then there exist $C$ 
such that \eqref{eq:exp_decay} is satisfied.
\end{lemma}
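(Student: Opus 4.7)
The plan is to exploit the fact that outside the optimal support $B$ the weight is the constant $-\beta$, so the radial profile of $w$ satisfies an explicit linear ODE. Denoting $R=\omega_N^{-1/N}$ the radius of $B$ and writing $\mu = I_{\Mcal}$, the radial symmetry of $w$ (Remark~\ref{rem:palle}) combined with the equation $-\Delta w = \mu m w$ gives
\begin{equation*}
w''(r) + \frac{N-1}{r}\, w'(r) - \mu\beta\, w(r) = 0, \qquad r > R.
\end{equation*}

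This is the modified Bessel equation of order $\nu := (N-2)/2$: the substitution $w(r) = r^{-(N-2)/2}\, U(\sqrt{\mu\beta}\,r)$ reduces it to $U''(z) + z^{-1} U'(z) - (1 + \nu^2/z^2) U(z) = 0$, whose linearly independent solutions are the modified Bessel functions $I_\nu$ and $K_\nu$. Since $w \in H^1(\R^N)$ must vanish at infinity while $I_\nu$ grows exponentially, the coefficient of $I_\nu$ has to be zero. Hence there exists $C>0$ (determined by matching the inner and outer solutions at $r=R$, and positive since $w > 0$) such that
\begin{equation*}
w(r) = C\, r^{-(N-2)/2}\, K_\nu(\sqrt{\mu\beta}\, r) \qquad \text{for all } r \geq R.
\end{equation*}

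The desired expansions then follow from the classical asymptotic formula
\begin{equation*}
K_\nu(z) = \sqrt{\frac{\pi}{2z}}\, e^{-z}\left(1 + \frac{4\nu^2 - 1}{8z} + O(z^{-2})\right), \qquad z \to +\infty,
\end{equation*}
which immediately yields the expansion for $w(r)$ in \eqref{eq:exp_decay} with a suitable constant $C_1$. For the gradient estimate I would differentiate and use the identity $K_\nu'(z) = -\tfrac12\bigl(K_{\nu-1}(z) + K_{\nu+1}(z)\bigr)$, together with the analogous asymptotics of $K_{\nu\pm 1}$, whose leading terms match that of $K_\nu$; the $O(1/z)$ contributions combine into the $O(r^{-1})$ remainder, producing the second expansion in \eqref{eq:exp_decay} with a (negative) constant $C_2$, consistent with the fact that $w$ is decreasing.

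The argument is essentially routine once one invokes the Bessel-function machinery; the only point that deserves attention is to justify that we sit exactly on the $K_\nu$-branch, and this is precisely guaranteed by the $H^1(\R^N)$-integrability of $w$. If one prefers a self-contained approach avoiding explicit Bessel asymptotics, the substitution $w(r) = r^{-(N-1)/2} v(r)$ turns the ODE into $v'' - \bigl[\mu\beta + (N-1)(N-3)/(4r^2)\bigr] v = 0$, a small perturbation of $v'' = \mu\beta v$; a Levinson-type perturbation argument then produces the same expansion. I expect no serious obstacle, as all the ingredients (positivity, radial symmetry, exponential decay) are already in hand.
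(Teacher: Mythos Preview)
Your proposal is correct and follows essentially the same route as the paper: reduce the radial ODE outside $B$ to the modified Bessel equation of order $\nu=\tfrac{N}{2}-1$, identify the decaying branch as $K_\nu$, and read off the asymptotics. The paper is terser (it simply cites the decay properties of $K_\nu$), while you spell out the exclusion of $I_\nu$ via $H^1$-integrability and the derivative identity for $K_\nu'$, but there is no substantive difference.
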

\begin{proof}
Since $m_{B}$ is piecewise constant and $w=w(r)$ is a radial $H^1(\R^N)$-function, we have that $w$ solves
\[
\begin{cases}
r^2 w_{rr}+(N-1)r w_{r}-\mu \beta r^2 w = 0 & \text{for }r> r_0,\\
w(+\infty)=0,
\end{cases}
\]
where $r_0$ is the radius of $B$. Writing 
\[
w(r) = r^{-\frac{N}{2}+1} \widetilde w(r\sqrt{\mu\beta}),
\]
we have that $\widetilde w$ solves
\[
\begin{cases}
r^2 \widetilde w_{rr}+r\widetilde w_{r}-\left(\left(\frac{N}{2}-1\right)^2+r^2\right)\widetilde w = 0 & \text{for }r> r_0,\\
\widetilde w(+\infty)=0.
\end{cases}
\]
We deduce that 
\[
\widetilde w(r) = C K_{\frac{N}2-1}(r),
\]
where $K_{\nu}$ is the modified Bessel function of the second kind, with parameter $\nu$. The
lemma follows by well known decay properties of $K_\nu$, see e.g. \cite[p. 5,9,23--24]{MR0058756vol2}.
\end{proof}
\section{Bound from above}\label{sec:bfa}

We aim to prove the following result.
\begin{theorem}\label{th:boundabove}
For any $P\in \partial \Omega$, we have that 
\begin{equation}\label{eq:boundabove}
\od(\delta)\leq 2^{-2/N}I_{\Mcal}\,\delta^{-2/N}\left(1-2^{1/N}\frac{2\alpha\gamma}{\int_{\R^N_+}|\nabla w|^2}\delta^{1/N}+o(\delta^{1/N})\right)
\end{equation}
where $w$ achieves $I_{\Mcal}$ (see Theorem \ref{thm:limit_pb} and Remark \ref{rmk:scaling}) 
and
\begin{equation}\label{eq:alfagamma}
\alpha=(N-1)H_P,\qquad\gamma:=\frac1{N+1}\int_{\R^{N}_{+}}|\nabla w|^2 z_{N}\, dz
\end{equation}
and $H_P$ denotes the mean curvature of $\partial \Omega$ at the point $P$. 
\end{theorem}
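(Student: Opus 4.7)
My plan is to construct, for each small $\delta > 0$, an admissible competitor $(A_\delta, v_\delta)$ for \eqref{eq:def_od} by transplanting (via a boundary-straightening diffeomorphism) a rescaled copy of the optimizer $w$ from Theorem \ref{thm:limit_pb} to a neighborhood of $P \in \partial\Omega$, and then to expand the resulting Rayleigh quotient to first order in the scaling parameter.

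Near $P$ I would use a $C^{2,\alpha}$ diffeomorphism $\Phi$ from a neighborhood of $P$ in $\overline\Omega$ to a neighborhood of $0$ in $\overline{\R^N_+}$, realizing Fermi (normal) coordinates. In these coordinates the pulled-back metric satisfies
\[
g_{ij}(z) = \delta_{ij} - 2 z_N h_{ij} + O(|z|^2), \qquad \sqrt{\det g(z)} = 1 - \alpha z_N + O(|z|^2),
\]
where $h_{ij}$ is the second fundamental form at $P$ (with no entries involving the normal index $N$) and $\alpha = (N-1)H_P$. Because $w$ is radial on $\R^N$, its gradient is radial, so $\partial_{z_N} w$ vanishes on $\{z_N = 0\}$; hence $w|_{\R^N_+}$ automatically satisfies a Neumann condition, and no boundary corrector is needed. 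For a scaling parameter $r>0$ I set
\[
v_r(x) := \eta(x)\, w\bigl(\Phi(x)/r\bigr), \qquad A_r := \Phi^{-1}(r B_+), \qquad B_+ := B \cap \R^N_+,
\]
with $\eta$ a smooth cutoff supported in the chart whose contribution is exponentially small in $1/r$ by \eqref{eq:exp_decay}. A direct expansion of $|A_r|$ via the Jacobian gives $|A_r| = r^N/2 + O(r^{N+1})$, so the constraint $|A_r|=\delta$ uniquely fixes $r = r(\delta) = (2\delta)^{1/N}(1 + O(\delta^{1/N}))$.

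Substituting $(v_r, m_r := \ind{A_r} - \beta\ind{\Omega\setminus A_r})$ into the Rayleigh quotient for $\lambda(A_r)$, passing to Fermi coordinates, and rescaling $\tilde z = z/r$, both integrals split into a leading term and a first-order correction. The numerator becomes
\[
r^{N-2}\!\int_{\R^N_+}\!\!|\nabla w|^2\,d\tilde z + r^{N-1}\!\int_{\R^N_+}\!\!\bigl(2h^{ij}\partial_i w\,\partial_j w - \alpha|\nabla w|^2\bigr)\tilde z_N\,d\tilde z + O(r^N),
\]
and the denominator expands analogously at orders $r^N$ and $r^{N+1}$ against the weight $m_B := \ind{B_+}-\beta\ind{B_+^c}$. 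Because $w$ is radial one has $\partial_i w\,\partial_j w = (w')^2 z_i z_j/|z|^2$; reducing $h^{ij}z_iz_j$ to tangential indices and integrating in spherical coordinates, the elementary identity
\[
\int_{S^{N-1}_+}\sigma_N^3\,d\sigma = \frac{2}{N+1}\int_{S^{N-1}_+}\sigma_N\,d\sigma
\]
turns $\int_{\R^N_+} h^{ij}\partial_i w\,\partial_j w\,\tilde z_N$ into an explicit multiple of $\int_{\R^N_+}|\nabla w|^2 \tilde z_N = (N+1)\gamma$. The denominator correction $\int m_B w^2 \tilde z_N$ is treated via a Pohozaev-type identity: testing $-\Delta w = I_\Mcal m_B w$ against $z_N w$ on $\R^N_+$ and integrating by parts relates $I_\Mcal \int m_B w^2 \tilde z_N$ to $\gamma$ together with a boundary integral on $\{z_N = 0\}$. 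Combining these pieces yields
\[
\lambda(A_r) = r^{-2} I_\Mcal \left(1 - \frac{2\alpha\gamma}{\int_{\R^N_+}|\nabla w|^2}\,r + O(r^2)\right),
\]
and inserting $r=(2\delta)^{1/N}(1+O(\delta^{1/N}))$ delivers \eqref{eq:boundabove}.

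The main technical obstacle is the combinatorial bookkeeping of these first-order corrections, which arise simultaneously from (i) the metric expansion (the cross terms $h^{ij}\partial_i w\,\partial_j w\, \tilde z_N$), (ii) the Jacobian of $\Phi$ (factors $-\alpha\tilde z_N$ in both numerator and denominator), and (iii) the shift in the scaling $r$ forced by the volume constraint $|A_r|=\delta$. Collapsing all of them into the single clean coefficient $-2\alpha\gamma/\int_{\R^N_+}|\nabla w|^2$ depends essentially on the radial symmetry of $w$ (to reduce tangential integrals via the spherical identity above) and on the Pohozaev-type identity for $w$ to express the denominator correction in terms of $\gamma$.
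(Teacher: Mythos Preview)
Your overall strategy---transplant the radial optimizer via boundary-straightening coordinates, expand the Rayleigh quotient, and invert the volume constraint---is exactly the paper's. However, the algebraic endgame is misstated in a way that matters at the target precision.

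The displayed formula
\[
\lambda(A_r)\;=\;r^{-2}\,I_{\Mcal}\!\left(1-\frac{2\alpha\gamma}{\int_{\R^N_+}|\nabla w|^2}\,r+O(r^2)\right)
\]
is \emph{not} what the Rayleigh quotient actually gives. In the paper's computation the order-$r$ coefficient of the quotient contains, besides $-2\gamma/\int_{\R^N_+}|\nabla w|^2$, an additional term proportional to $\omega_{N-1}\omega_N^{-(N+1)/N}/\bigl(N(N+1)\bigr)$. This ``dirty'' term arises because the denominator correction $\gamma_1=\int_{\R^N_+}mw^2 z_N$ is \emph{not} expressible purely through $\gamma$; the paper needs two identities---testing the equation against $z_N^2\,\partial_N w$ \emph{and} a full Pohozaev identity (testing against $z\cdot\nabla w$)---to rewrite $\mu(B)\gamma_1-(N-1)\gamma$, and even then a residual term involving $w^2(\bar R)$ (equivalently, the extra $\omega$-factor) survives. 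Your proposed single test against $z_N w$ only produces a relation that still carries the boundary integral $\tfrac12\int_{\{z_N=0\}}w^2$, which you have no independent handle on.

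Consequently the last step is also wrong as written: inserting $r=(2\delta)^{1/N}\bigl(1+O(\delta^{1/N})\bigr)$ into $r^{-2}$ generates a first-order correction in $\delta^{1/N}$ of exactly the order you are computing, so the $O(\delta^{1/N})$ cannot be left implicit. In the paper the volume expansion is $\delta=\tfrac12 r^N\bigl(1-br+o(r)\bigr)$ with an explicit $b=\tfrac{2}{N+1}\omega_{N-1}\omega_N^{-(N+1)/N}\alpha$, and the clean coefficient $2\alpha\gamma/\int_{\R^N_+}|\nabla w|^2$ appears \emph{only after} the $2b/N$ from inverting this relation cancels the residual $\omega$-term in the Rayleigh quotient. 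You must track both first-order pieces explicitly and exhibit this cancellation; without it the claimed upper bound does not follow.
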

\begin{remark}
Since $\widehat H = \max_{P\in\partial\Omega} H_P$, the bound from above \eqref{eq:upperbound} in Theorem \ref{thm:bound_intro} follows at once, with
\begin{equation}\label{eq:Gamma}
\Gamma = \frac{2^{1+1/N}(N-1)}{N+1}\,\frac{\int_{\R^N_+} w'(|z|)^2 z_N\,dz}{\int_{\R^N_+} w'(|z|)^2\,dz}
\end{equation}
(recall that $w$ is radial).
\end{remark}
\begin{remark}\label{rem:M1M2}
Recalling Remark \ref{rmk:scaling}, Theorem \ref{th:boundabove} yields
\[
\limsup_{\delta \to 0} \od(\delta)\cdot \delta^{-2/N} \leq 2^{-2/N}I_{\Mcal}=I_{\Mcal_2}.
\]
\end{remark}

To prove Theorem \ref{th:boundabove}, in the spirit of \cite[Sect. 3]{nitakagi_cpam} we use a diffeomorphism to flatten the boundary of $\Omega$ near a suitable point. To this aim, 
we introduce some notation which will be used in the following.

Let $\Omega\subset \R^N$ be a $C^{2,\alpha}$ domain. Up to an affine change of variables, 
we can assume that $P=0\in \partial \Omega$ and that the outer unit normal to the boundary 
of $\Omega$ is $-e_N$.
Then, using the notation\[
x'=(x_1,\dots,x_{N-1}),
\]
there is $\delta_0>0$, a $C^{2,\alpha}$ function \[
\psi\colon \Big\{x'\in \R^{N-1}: |x'|<\delta_0\Big\}\to \R,
\]  
and a neighborhood of the origin $\mathcal N$ such that 
\begin{enumerate}
\item[i)]$\psi(0)=0$, $\nabla \psi(0)=0$, $\Delta \psi(0)=(N-1)H_0 = \alpha$,
\item[ii)] $\displaystyle
\partial \Omega\cap \mathcal N=\Big\{(x',x_N) : x_N=\psi(x')\Big\},\qquad \Omega\cap \mathcal N=\Big\{(x',x_N) : x_N>\psi(x')\Big\}.
$
\end{enumerate}
For a certain $\delta_1>0$, we define a diffeomorphism \[
\Phi\colon \Big\{y\in \R^N : |y|\leq \delta_1\Big\}\to \R^N,\qquad x=\Phi(y)=(\Phi_1(y),\dots, \Phi_N(y)),
\]
as 
\[
\Phi_j(y)=
\begin{cases}
y_j-y_N\frac{\partial \psi}{\partial x_j}(y'),&\qquad \text{for }j=1,\dots, N-1,
\smallskip\\
y_N+\psi(y'),&\qquad \text{for }j=N.
\end{cases}
\]
We note that $D\Phi(0)=\Id$, due to the properties of $\psi$, and therefore $\Phi$ is locally invertible in, say, 
$B_{3\NTr}$ for some $\NTr>0$.  We define, for $j=1,2,3$,
\begin{equation}\label{eq:defPsi}
D_j=\Phi(B_{j\NTr}^+)\subset\Omega,\quad\text{and}\quad \Psi\from D_3\to B_{3\NTr}^+,\ \Psi(x):=\Phi^{-1}(x).
\end{equation}
The map $\Psi$ can be seen as a local diffeomorphism straightening the boundary around $0\in \partial \Omega$. For future reference, we remark that
\begin{equation}\label{eq:lemmaA1}
\begin{split}
\det D\Phi (y)  &= 1 -\alpha y_N + O(|y|^2),\smallskip\\
\left| \frac{y}{|y|}D\Psi (\Phi(y))\right|^2  &= 1 +2 y_N \sum_{i,j=1}^{N-1}  
\psi_{ij}(0)\frac{y_iy_j}{|y|^2} + O(|y|^2),
\end{split}
\qquad\text{as  }y\to0,
\end{equation}
see \cite[Lemma A.1]{nitakagi_cpam}.

To prove Theorem \ref{th:boundabove}, we build a competitor for $\od(\delta)$ by composing the diffeomorphism $\Psi$ 
with a suitable dilation of the  weight 
\[
m(x)=\ind{B}-\beta\ind{\R^N\setminus B},
\]
and of the corresponding eigenfunction $w$ obtained in Theorem \ref{thm:limit_pb}. A main difference with 
respect to \cite{nitakagi_cpam} is that we have to keep track of the measure of the positivity set of the 
weight. Let us define
\[
m_\delta(x)=
\begin{cases}
m(\Psi(x)/r(\delta)),\qquad &\text{if }x\in D_2,\\
-\beta,\qquad &\text{if }x\in \Omega\setminus D_2,
\end{cases}
\]
and $r(\delta)$ in such a way that the weight $m_\delta$ is admissible, that is, \[
|\{x\in \Omega : m_\delta(x)=1\}|=\delta.
\]
For $\delta$ small, the asymptotic relation between $\delta$ and $r(\delta)$ is explicit.
\begin{lemma}\label{le:rdelta}
It holds $r(\delta)\to0$ and
\begin{equation}\label{eq:rdelta}
\delta=r^N(\delta)\left(\frac12-\frac{1}{N+1}\omega_{N-1}\omega_N^{-\frac{N+1}{N}}\alpha\,r(\delta)+O(r^2(\delta))\right),
\qquad\text{as $\delta\to 0$.}
\end{equation}
\end{lemma}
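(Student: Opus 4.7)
The plan is to read the measure condition $|\{m_\delta=1\}|=\delta$ through the diffeomorphism $\Phi$ and expand in $r=r(\delta)$.

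By the definition of $m_\delta$ and the fact that $\Psi=\Phi^{-1}$ on $D_3$, we have
\[
\{x\in\Omega: m_\delta(x)=1\}=\{x\in D_2: \Psi(x)\in rB\}=\Phi\bigl((rB\cap\R^N_+)\cap B^+_{2\NTr}\bigr).
\]
Since $B$ has radius $r_0:=\omega_N^{-1/N}$, for all sufficiently small $r$ we have $rB\subset B_{2\NTr}$, so the intersection with $B^+_{2\NTr}$ simplifies to $rB^+:=rB\cap\R^N_+$. Therefore the change of variables $x=\Phi(y)$ gives
\[
\delta=\int_{rB^+}\det D\Phi(y)\,dy.
\]
First I will observe that the right-hand side is a continuous strictly increasing function of $r$ vanishing at $r=0$, so $r(\delta)$ is well-defined and $r(\delta)\to 0$ as $\delta\to 0$.

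Then I insert the expansion \eqref{eq:lemmaA1}, namely $\det D\Phi(y)=1-\alpha y_N+O(|y|^2)$, and split the integral into three pieces. The leading term is $|rB^+|=\tfrac12 r^N$, because $|B|=1$. The first correction is $-\alpha r^{N+1}\int_{B^+}y_N\,dy$; I compute this by Fubini slicing in the $y_N$-direction:
\[
\int_{B^+}y_N\,dy=\omega_{N-1}\int_0^{r_0}y_N(r_0^2-y_N^2)^{(N-1)/2}dy_N=\frac{\omega_{N-1}}{N+1}r_0^{N+1}=\frac{\omega_{N-1}}{N+1}\omega_N^{-(N+1)/N}.
\]
The remainder is controlled by $\int_{rB^+}|y|^2\,dy=O(r^{N+2})$. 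Collecting the three contributions yields
\[
\delta=r^N\!\left(\frac12-\frac{\omega_{N-1}\omega_N^{-(N+1)/N}}{N+1}\,\alpha\,r+O(r^2)\right)\!,
\]
which is exactly \eqref{eq:rdelta}.

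No serious obstacle is expected: the only nontrivial routine calculation is the evaluation of $\int_{B^+}y_N\,dy$, and the only conceptual point is justifying $r(\delta)\to0$ from the monotonicity of $r\mapsto\int_{rB^+}\det D\Phi$.
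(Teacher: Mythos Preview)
Your proposal is correct and follows essentially the same route as the paper: identify $\{m_\delta=1\}=\Phi(rB^+)$ for small $r$, change variables to write $\delta=\int_{rB^+}\det D\Phi(y)\,dy$, insert the expansion $\det D\Phi(y)=1-\alpha y_N+O(|y|^2)$, and evaluate $\int_{B^+}y_N\,dy=\frac{\omega_{N-1}}{N+1}\omega_N^{-(N+1)/N}$. The only cosmetic differences are that the paper rescales $z=y/r$ before integrating and quotes the value of $\int_{B^+}z_N\,dz$ without the Fubini computation, while you work directly in $y$ and supply that calculation; also, the paper deduces $r(\delta)\to0$ from the fact that the set $\Phi(B_{2\NTr}^+\cap rB)$ has measure $\delta\to0$, whereas you phrase it via monotonicity of $r\mapsto\int_{rB^+}\det D\Phi$ --- just be careful that the monotonicity should be argued for the full map $r\mapsto|\Phi(rB\cap B_{2\NTr}^+)|$ \emph{before} you have simplified $rB\cap B_{2\NTr}^+$ to $rB^+$, since that simplification already uses that $r$ is small.
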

\begin{proof}
We write $r=r(\delta)$. We have
\[
\{x\in \Omega : m_\delta(x)=1\} = \left\{x\in D_2 : \dfrac{\Psi(x)}{r}\in B\right\}
=\left\{\Phi(y) : y\in B_{2\NTr}^+\cap r B\right\}.
\]
In particular, since this set has measure $\delta$, we have that $B_{2\NTr}^+\cap r B = r B^+$ 
for $\delta$ sufficiently small. As consequence, $r\to0$ as $\delta\to0$. Using also~\eqref{eq:lemmaA1}, we compute\[
\begin{split}
\delta&=|\{x\in \Omega : m_\delta(x)=1\}|=\int_{rB^+}\det D\Phi(y)\,dy=\int_{rB^+} 
\Big(1-\alpha y_N+O(|y|^2)\Big)\,dy\\
&=r^N\int_{B^+}\Big(1-\alpha rz_N+O(r^2|z|^2)\Big)\,dz=r^N\Big(\frac12-\alpha \frac{\omega_{N-1}}{N+1}\omega_N^{-\frac{N+1}{N}}r+O(r^2)\Big),
\end{split}
\]
where $\alpha=\Delta \psi(0)$ and we have also used the fact that 
\begin{equation*}
\int_{B^+}z_N\,dz=\frac{\omega_{N-1}}{N+1}\omega_N^{-\frac{N+1}{N}}. \qedhere
\end{equation*}
\end{proof}
Turning to the eigenfunction $w$, to build a competitor $\varphi_\delta$ after rescaling we also
need to cut-off. We define
\[
\varphi_\delta(x)=\zeta_{\NTr}(|\Psi(x)|)w\left(\frac{\Psi(x)}{r(\delta)}\right),
\qquad\text{where}\quad
\zeta_\rho(t)=\begin{cases}
1 & 0\leq t\leq \rho
\\
2-\rho^{-1}t & \rho< t\leq 2\rho
\\
0 &   t> 2\rho.
\end{cases}
\]
For easier notation, it is convenient to introduce also the function 
$w_{*}(z):=\zeta_{\NTr/r(\delta)}(|z|)w(z)$, in such a way that
\[
\varphi_{\delta}(x) = w_*\left(\frac{\Psi(x)}{r(\delta)}\right).
\]
Notice that, in principle, both $\varphi_\delta$ and $w_*$ are not defined in the whole $\R^N$; 
nonetheless, by trivial extension, we can assume that they are Lipschitz and compactly supported on 
$\R^N$. 
\begin{proposition}\label{prop:boundabove}
It holds, as $\delta \to 0$,
\[
\begin{split}
\int_\Om|\nabla\varphi_\delta|^2&= r^{N-2}(\delta)\left\{\frac12\int_{\R^N}|\nabla w|^2-(N-1)\alpha\gamma r(\delta)+O(r^2(\delta))\right\},\\
\int_{\Omega}m_\delta \varphi_\delta^2&=r^N(\delta)\left\{\frac12\int_{\R^N}mw^2-\alpha \gamma_1 r(\delta)+O(r^2(\delta))\right\},
\end{split}
\]
where $\alpha$ and $\gamma$ are defined in \eqref{eq:alfagamma} and 
\[
\gamma_1=\int_{\R^N_+}m(z)w(z)^2z_N\,dz.
\]
\end{proposition}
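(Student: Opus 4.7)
The plan is to reduce both integrals, via the two successive changes of variables $y=\Psi(x)$ (which straightens $\partial\Omega$ near $P$) and $z=y/r(\delta)$ (which exposes the limit profiles $w$ and $m$), to integrals over the half-ball $B_{2\NTr/r}^{+}$ against fixed quantities modulated by the local geometric expansions in \eqref{eq:lemmaA1}. The exponential decay of $w$ and $w'$ given by Theorem~\ref{thm:limit_pb} lets me replace $w_{*}$ by $w$ and $B_{2\NTr/r}^{+}$ by $\R^{N}_{+}$ up to an error exponentially small in $r$; in particular, the cutoff annulus $\{\NTr<|\Psi|<2\NTr\}$, where $|w|+|w'|\lesssim e^{-c\NTr/r}$, contributes a negligible term.

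I would first handle $\int_{\Omega}m_{\delta}\varphi_{\delta}^{2}$, which is simpler because it carries no derivatives. Since $\varphi_{\delta}$ is supported in $\overline D_{2}$, the integral reduces to the one over $D_{2}$, and the two substitutions together with $\det D\Phi(rz)=1-\alpha r z_{N}+O(r^{2})$ yield
\[
\int_{\Omega}m_{\delta}\varphi_{\delta}^{2}
=r^{N}\!\int_{B_{2\NTr/r}^{+}}\!m(z)\,w_{*}(z)^{2}\bigl[1-\alpha rz_{N}+O(r^{2})\bigr]\,dz.
\]
The leading term gives $\frac{r^{N}}{2}\int_{\R^{N}}mw^{2}$ by the radial symmetry of $mw^{2}$, and the linear correction gives $-r^{N+1}\alpha\gamma_{1}$, as claimed.

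For the Dirichlet integral, the chain rule and the radiality of $w_{*}$ give
\[
|\nabla\varphi_{\delta}(x)|^{2}=\frac{1}{r^{2}}\bigl(w_{*}'(|z|)\bigr)^{2}\left|\frac{z}{|z|}\,D\Psi(\Phi(rz))\right|^{2},\qquad z=\Psi(x)/r,
\]
and inserting both lines of \eqref{eq:lemmaA1} after the same changes of variables produces
\[
r^{N-2}\!\int_{B_{2\NTr/r}^{+}}\!(w_{*}')^{2}\Bigl[1+rz_{N}\bigl(2Q(z)-\alpha\bigr)+O(r^{2}(1+|z|^{2}))\Bigr]\,dz,
\]
with $Q(z):=\sum_{i,j=1}^{N-1}\psi_{ij}(0)z_{i}z_{j}/|z|^{2}$. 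Since $(w')^{2}(1+|z|^{2})\in L^{1}(\R^{N})$ the remainder integrates to $O(r^{2})$ and the leading term is $\tfrac12\int_{\R^{N}}|\nabla w|^{2}$.

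The only non-routine step is the evaluation of the linear correction $\int_{\R^{N}_{+}}(w')^{2}z_{N}(2Q-\alpha)\,dz$. Off-diagonal contributions to $Q$ vanish by the symmetry $z_{i}\mapsto-z_{i}$ ($i<N$), and the diagonal ones are equal by permutation symmetry in $z_{1},\dots,z_{N-1}$; together with $\alpha=\sum_{i<N}\psi_{ii}(0)$ this reduces the quadratic form to $\alpha A$, where $A:=\int_{\R^{N}_{+}}(w')^{2}z_{N}z_{1}^{2}/|z|^{2}\,dz$. A spherical computation (via Beta functions) yields $\int_{S^{N-1}_{+}}\nu_{1}^{2}\nu_{N}\,d\sigma=\frac{1}{N+1}\int_{S^{N-1}_{+}}\nu_{N}\,d\sigma$, hence $A=\gamma$, and setting $I:=\int_{\R^{N}_{+}}(w')^{2}z_{N}\,dz=(N+1)\gamma$ the linear correction equals $\alpha(2A-I)=-(N-1)\alpha\gamma$. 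I expect the main obstacle to be precisely this moment identity, which reconciles the anisotropy of $\psi_{ij}(0)$ with the isotropy of $w$, together with the bookkeeping ensuring that tails and cutoff artefacts stay well below the order $O(r^{N-1})$ of the correction.
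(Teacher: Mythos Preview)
Your proposal is correct and follows essentially the same approach as the paper: the same two changes of variables, the same use of \eqref{eq:lemmaA1}, the same exponential-decay bookkeeping to discard the cutoff annulus and pass from $B^{+}_{2\ell/r}$ to $\R^{N}_{+}$, and the same symmetry reduction of the quadratic form $Q$. The only cosmetic difference is that you prove the moment identity $A=\gamma$ (equivalently $\int_{\R^{N}_{+}}(\partial_{j}w)^{2}z_{N}\,dz=\gamma$ for $j<N$) directly via a spherical computation, whereas the paper cites \cite[Lemma~3.3]{nitakagi_cpam} for it.
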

\begin{proof} We write $r=r(\delta)$ and assume that $\delta$ and $r$ are small enough.

{\it Step 1.} We start from the gradient term. Using the natural change of variable, we obtain\[
\begin{split}
\int_\Omega|\nabla \varphi_\delta|^2&=\int_{D_2}\left|\nabla_x w_*(\Psi(x)/r) \right|^2\,dx=r^{-2}\int_{D_2}\left|w_*'(\Psi(x)/r)\frac{\Psi(x)}{|\Psi(x)|}D\Psi(x)\right|^2\,dx
\\
&=r^{-2}\int_{B^+_{2\NTr}}\left|w_*'(y/r)\frac{y}{|y|}D\Psi(\Phi(y))\right|^2\det D\Phi(y)\,dy.
\end{split}
\]
We now use~\eqref{eq:lemmaA1} and write 
\[
y= r z\qquad\text{ and }\qquad R=\frac\NTr{r}. 
\]
We obtain, calling from now on $\psi_{ij}=\psi_{ij}(0)$,
\begin{equation}\label{eq:passa1}
\int_\Omega|\nabla \varphi_\delta|^2=r^{N-2}\int_{B^+_{2R}}|w_*'(z)|^2\left(1+r z_N\left[2\sum_{i,j=1}^{N-1}\psi_{ij}\frac{z_iz_j}{|z|^2}-\alpha\right]+O(r^2|z|^2)\right)\,dz.
\end{equation}
By the exponential decay of $w$ and $w'$ (see Lemma \ref{le:decaypsialto}) we have, for all $z\in B_{2R}^+$, \[
|w_*'(z)|^2=\Big[w'(z)\zeta_R(|z|)+w(z)\zeta_R'(|z|)\Big]^2
\leq 2\Big[w'(z)^2+w(z)^2R^{-2}\Big]\leq Ce^{-\vartheta|z|},
\]
for a suitable $\vartheta>0$. On the other hand, it is easy to check that there is a constant $C_0$, independent of $r$ (and thus also of $\delta$), such that \[
\left(1+rz_N\Big[2\sum_{i,j=1}^{N-1}\psi_{ij}\frac{z_iz_j}{|z|^2}-\alpha\Big]+O(r^2|z|^2)\right)\leq C_0,\qquad \text{for all }z\in B_{2R}^+.
\]
Hence, 
\[
\int_{B^+_{2R}\setminus B_R^+}|w_*'(z)|^2\left(1+rz_N\Big[2\sum_{i,j=1}^{N-1}\psi_{ij}\frac{z_iz_j}{|z|^2}-\alpha\Big]+O(r^2|z|^2)\right)\,dz\leq C_0\int_{B^+_{2R}\setminus B_R^+}
e^{-\vartheta |z|}\,dz = O(r^2).
\]
On the other hand, in $B_R$ we have that that $w_*=w$ and
\[
\int_{B^+_{R}}|w_*'(z)|^2 O(r^2|z|^2)\,dz \le C r^2 \int_{\R^N_+} |w'(z)|^2|z|^2\,dz = O(r^2),
\]
again by exponential decay of $w$. Plugging this information into \eqref{eq:passa1} we have
\begin{equation}\label{eq:passa2}
\int_\Omega|\nabla \varphi_\delta|^2=r^{N-2}\int_{B^+_{R}}|w'(z)|^2\left(1+r z_N\left[2\sum_{i,j=1}^{N-1}\psi_{ij}\frac{z_iz_j}{|z|^2}-\alpha\right]\right)\,dz+O(r^N).
\end{equation}
To pass to integrals in the half-space, we notice that, by exponential decay, 
\[
\int_{\R^N_+}w'(z)^2|z|^2\,dz<+\infty
\qquad\implies\qquad
\int_{\R^N_+\setminus B_R^+}w'(z)^2(1+|z|)\,dz= O(r^N).
\]
In conclusion, we have proved that
\[
\int_\Omega|\nabla \varphi_\delta|^2=r^{N-2}\int_{\R^N_+}|w'(z)|^2\left(1+rz_N\Big[2\sum_{i,j=1}^{N-1}\psi_{ij}\frac{z_iz_j}{|z|^2}-\alpha\Big]\right)\,dz+O(r^N).
\]
We now observe that the radiality of $w$ entails \[
\int_{\R^N_+}w'(z)^2\frac{z_iz_j}{|z|^2}z_N\,dz=0,\qquad \text{if }i\not=j.
\]
Then one can compute, using also~\cite[Lemma~3.3]{nitakagi_cpam},\[
\sum_{i,j=1}^{N-1}\psi_{ij}\int_{\R^N_+}w'(z)^2\frac{z_iz_j}{|z|^2}z_N\,dz=\sum_{j=1}^{N-1}\psi_{jj}\int_{\R^N_+}\left|\frac{\partial w}{\partial z_j}\right|^2z_N\,dz=\alpha\gamma.
\]
We have thus concluded the first part of the statement (for $\delta$ and $r(\delta)$ small enough):\[
\begin{split}
\int_{\Omega}|\nabla \varphi_\delta|^2&=r^{N-2}\left(\int_{\R^N_+}|\nabla w(z)|^2+r\Big[2\alpha\gamma-(N+1)\alpha\gamma\Big]+O(r^2)\right)\\
&=r^{N-2}\left(\frac12\int_{\R^N}|\nabla w(z)|^2-r(N-1)\alpha\gamma+O(r^2)\right).
\end{split}
\]

{\it Step 2.} We now deal with the second part of the claim. With the same techniques as before (with the change of variables $y=\Psi(x)$ and $z=y/r$), we can compute\[
\begin{split}
\int_\Omega m_\delta \varphi_\delta^2\,dx&=\int_{D_2}m_\delta\varphi_\delta^2\,dx=\int_{B^+_{2\NTr}} m(y/r)w_*^2(y/r)\det D\Phi(y)\,dy\\
&=r^N\int_{B_{2R}^+}m(z)w_*^2(z)\left(1-\alpha z_Nr+O(r^2|z|^2)\right)\,dz,
\end{split}
\]
where $R=\NTr/r$ and we have used~\eqref{eq:lemmaA1}.

The exponential decay of $w$ allows to argue as in the previous step, yielding
\[
\begin{split}
r^{-N}\int_\Omega m_\delta \varphi_\delta^2\,dx&=\int_{\R^N_+}m(z)w^2(z)\Big(1-\alpha z_Nr\Big)\,dz+O(r^2)\\
&=\frac12\int_{\R^N}m(z)w^2(z)\,dz-r\alpha\int_{\R^N_+}m(z)w^2(z)z_N\,dz+O(r^2)\\
&=\frac12\int_{\R^N}m(z)w^2(z)\,dz-r\alpha\gamma_1+O(r^2),
\end{split}
\]
where $\gamma_1$ is defined in the statement.
\end{proof}
\begin{corollary}\label{cor:boundabove}
With the notation of Proposition~\ref{prop:boundabove}, for $\delta\to 0$ (and thus $r(\delta)\to 0$), we have
\begin{equation}\label{eq:boundabove1}
\od(\delta)\leq r^{-2}(\delta)\left\{\mu(B)+\frac{\alpha r(\delta)}{\int_{\R^N_+}mw^2}\left[\mu(B)\gamma_1-(N-1)\gamma\right]+o(r(\delta))\right\}.
\end{equation}
\end{corollary}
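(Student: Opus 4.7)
The plan is to use $\varphi_\delta$ as a competitor in the variational characterization of $\lambda(D)$ from Definition~\ref{def:od}, applied to $D=\{x\in\Omega:m_\delta(x)=1\}$, and then to read off the claimed asymptotic expansion from Proposition~\ref{prop:boundabove} by a direct Taylor expansion of the quotient.

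First I would check admissibility. By construction of $r(\delta)$ we have $|D|=\delta$, so $D$ is admissible in \eqref{eq:def_od}; moreover $\varphi_\delta$ inherits compact support in $\overline{D_2}\subset\overline\Omega$ from the cut-off $\zeta_\NTr$, hence $\varphi_\delta\in H^1(\Omega)$ (no boundary constraint is needed since the problem is Neumann). Proposition~\ref{prop:boundabove} gives
\[
\int_\Omega m_\delta \varphi_\delta^2 = r^N(\delta)\left\{\tfrac12\int_{\R^N}mw^2 + O(r(\delta))\right\},
\]
and since $w$ achieves $\mu(B)=I_\Mcal>0$ we have $\int_{\R^N}mw^2>0$, so this denominator is strictly positive for $\delta$ small. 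Therefore $\varphi_\delta$ is admissible in \eqref{eq:def_lambda_beta_D} and
\[
\od(\delta)\le\lambda(D)\le\frac{\int_\Omega|\nabla\varphi_\delta|^2}{\int_\Omega m_\delta\varphi_\delta^2}.
\]

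Next I would plug the two asymptotic formulas of Proposition~\ref{prop:boundabove} into this quotient and expand. Writing $r=r(\delta)$, setting $A:=\tfrac12\int_{\R^N}|\nabla w|^2$, $B':=\tfrac12\int_{\R^N}mw^2$, and using that $\mu(B)=A/B'$ together with the radial identity $\int_{\R^N_+}mw^2=B'$, I get
\[
\frac{\int_\Omega|\nabla\varphi_\delta|^2}{\int_\Omega m_\delta\varphi_\delta^2}
=r^{-2}\cdot\frac{A-(N-1)\alpha\gamma\,r+O(r^2)}{B'-\alpha\gamma_1\,r+O(r^2)}
=r^{-2}\cdot\frac{A}{B'}\cdot\frac{1-\tfrac{(N-1)\alpha\gamma}{A}r}{1-\tfrac{\alpha\gamma_1}{B'}r}+o(r^{-1}).
\]
A first order Taylor expansion of the ratio then yields
\[
\od(\delta)\le r^{-2}\left\{\mu(B)+\mu(B)\left[\tfrac{\alpha\gamma_1}{B'}-\tfrac{(N-1)\alpha\gamma}{A}\right]r+o(r)\right\}
=r^{-2}\left\{\mu(B)+\tfrac{\alpha r}{B'}\bigl[\mu(B)\gamma_1-(N-1)\gamma\bigr]+o(r)\right\},
\]
where in the last step I used $A=\mu(B)B'$ to simplify the second term. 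Since $B'=\int_{\R^N_+}mw^2$, this is exactly \eqref{eq:boundabove1}.

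There is no real obstacle here; the statement is essentially a bookkeeping consequence of Proposition~\ref{prop:boundabove}. The only point requiring a touch of care is justifying that the $O(r^2)$ error terms in numerator and denominator translate into a $o(r)$ error in the ratio after dividing by the leading $B'$, which is a one-line Taylor argument thanks to strict positivity of $\int_{\R^N}mw^2$.
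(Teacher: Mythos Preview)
Your proposal is correct and follows essentially the same approach as the paper: you use $\varphi_\delta$ as a competitor for $\od(\delta)$, insert the two expansions from Proposition~\ref{prop:boundabove}, and Taylor-expand the quotient to first order in $r$. The paper's proof is nearly identical, just packaging the Taylor step as the elementary identity $\frac{a-c_1\eps+o(\eps)}{b-c_2\eps+o(\eps)}=\frac{a}{b}-(\frac{c_1}{b}-\frac{ac_2}{b^2})\eps+o(\eps)$; your version is slightly more explicit about checking admissibility of the competitor.
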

\begin{proof}
We note that $\varphi_\delta$ is an admissible competitor for $\od(\delta)$, for $\delta$ and $r(\delta)$ small enough, thus \[
\od(\delta)\leq \frac{\int_\Omega|\nabla \varphi_\delta|^2}{\int_{\Omega}m_\delta\varphi_\delta}.
\]
Then it is enough to apply the expansions proved in Proposition~\ref{prop:boundabove}, also recalling the elementary expansion \[
\frac{a-c_1\eps+o(\eps)}{b-c_2\eps+o(\eps)}=
\left(\frac{a}{b}-\frac{c_1}{b}\eps+o(\eps)\right)\cdot\left(1+\frac{c_2}{b}\eps+o(\eps)\right)=\frac{a}{b}-\Big(\frac{c_1}{b}-\frac{a c_2}{b^2}\Big)\eps+o(\eps),
\]
with \[
a=\int_{\R^N_+}|\nabla w|^2,\quad b=\int_{\R^N_+}mw^2,\quad c_1=(N-1)\alpha\gamma,\quad c_2=\alpha \gamma_1\quad \text{and }\eps=r(\delta)\to0.\qedhere
\]
\end{proof}

In order to deduce the desired bound from above, we need a technical lemma.
\begin{lemma}\label{le:technical}
With the notation above, we have 
\begin{equation}\label{eq:technical2}
\mu(B)\gamma_1-(N-1)\gamma=-2\gamma+4\mu(B)\omega_N^{-\frac{N+1}{N}}\frac{\omega_{N-1}}{N(N+1)}\int_{\R^N_+}mw^2.
\end{equation}
\end{lemma}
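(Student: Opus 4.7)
The plan is to reduce the identity to a one-dimensional computation exploiting the radial structure. Since $w=w(r)$ is radial and $m(r)=1$ for $r<r_0$, $m(r)=-\beta$ for $r>r_0$, with $r_0=\omega_N^{-1/N}$, using spherical coordinates and the hemisphere identity $\int_{S^{N-1}_+}\theta_N\,d\sigma(\theta)=\omega_{N-1}$, one rewrites
\[
(N+1)\gamma=\omega_{N-1}\int_0^{\infty}w'(r)^2 r^N\,dr,\qquad \gamma_1=\omega_{N-1}\int_0^{\infty}m(r)w(r)^2 r^N\,dr,
\]
and $\int_{\R^N_+}mw^2\,dz=\tfrac{N\omega_N}{2}\int_0^\infty m(r)w(r)^2 r^{N-1}\,dr$. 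Setting $I_k:=\int_0^\infty m(r)w(r)^2 r^k\,dr$ and $J_k:=\int_0^\infty w'(r)^2 r^k\,dr$, and using $r_0/\omega_N=\omega_N^{-(N+1)/N}$, the identity \eqref{eq:technical2} is equivalent to the one-dimensional relation
\[
\mu(N+1)I_N-(N-3)J_N=2\mu r_0 I_{N-1}.
\]

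To prove this, I would derive a Pohozaev-type identity for the radial ODE $-w''-\tfrac{N-1}{r}w'=\mu m(r)w$ by multiplying by $r^{k+1}w'$ and integrating on $(0,r_0)\cup(r_0,\infty)$. Since $w\in C^{1,1}$ the boundary terms involving $(w')^2$ at $r=r_0$ match and cancel, while the jump $m(r_0^-)-m(r_0^+)=1+\beta$ produces a nontrivial contribution. Collecting terms yields, for every $k\ge 0$,
\[
(k+3-2N)J_k+\mu(k+1)I_k=\mu(1+\beta)r_0^{k+1}w(r_0)^2.
\]
Applying this with $k=N-1$ and combining with the energy identity $J_{N-1}=\mu I_{N-1}$ (obtained by multiplying the ODE by $r^{N-1}w$ and integrating) yields the compact relation $2I_{N-1}=(1+\beta)r_0^N w(r_0)^2$. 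Substituting into the relation for $k=N$ eliminates $w(r_0)^2$ and produces the desired identity, which translates back to \eqref{eq:technical2}.

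The main obstacle is making the Pohozaev integration by parts rigorous given the mere $C^{1,1}$ regularity of $w$ caused by the discontinuity of $m$ at $\partial B$: the remedy is to split every integral on $(0,r_0)\cup(r_0,\infty)$, integrate by parts on each piece where $w$ is classical, and exploit the matching of $w$ and $w'$ across $r_0$ to cancel the kinetic jump terms while keeping the weighted term $(1+\beta)r_0^{k+1}w(r_0)^2$. The exponential decay estimates in Lemma \ref{le:decaypsialto} guarantee that all boundary contributions at infinity vanish, while the factor $r^{k+1}$ with $k\ge 0$ controls the boundary at $r=0$.
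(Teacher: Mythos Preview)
Your argument is correct. Both your proof and the paper's rely on two Pohozaev-type identities to eliminate the boundary value $w(r_0)^2$, but the implementations differ. The paper works directly in $\R^N_+$: it first tests the PDE $-\Delta w=\mu m w$ with the anisotropic multiplier $z_N^2\,\partial_N w$ and integrates by parts (using the divergence theorem and the relation $\gamma=\tfrac12\int_{\R^N_+}(\partial_N w)^2 z_N$ from \cite[Lemma~3.3]{nitakagi_cpam}) to obtain an intermediate identity still containing $w(\overline R)^2$; it then removes this term via the standard Pohozaev multiplier $\nabla w\cdot z$ in $\R^N_+$. You instead exploit radiality from the outset, reduce everything to one-dimensional integrals $I_k,J_k$ via the hemisphere formula $\int_{S^{N-1}_+}\theta_N\,d\sigma=\omega_{N-1}$, and derive a single parametric identity $(k+3-2N)J_k+\mu(k+1)I_k=\mu(1+\beta)r_0^{k+1}w(r_0)^2$ by multiplying the radial ODE by $r^{k+1}w'$; specializing to $k=N-1$ (combined with the energy relation $J_{N-1}=\mu I_{N-1}$) and $k=N$ then closes the computation. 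Your route is more elementary and self-contained---no recourse to the Ni--Takagi lemma, no $N$-dimensional divergence computations---at the modest cost of the initial spherical reduction; the paper's route is more geometric and keeps the half-space structure visible throughout.
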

\begin{proof}
We write $\mu=\mu(B)$.
First of all, we test the equation of $w$ in $\R^N_+$ with $z_N^2\partial_N w$:
\begin{equation}\label{eq:testaz^2}
\int_{\R^N_+}(-\Delta w)z_N^2\partial_N w\,dz=\mu\int_{\R^N_+}mwz_N^2\partial_N w\,dz.
\end{equation}
Using the divergence theorem, the decay to zero of $w$ at infinity and the relation
\[
\gamma=\frac1{N+1}\int_{\R^{N}_{+}}\left|\nabla w(z)\right|^{2}z_{N}\, dz=\frac12\int_{\R^N_+}\left(\frac{\partial w}{\partial z_N}\right)^2z_N\,dz.
\]
(see~\cite[Lemma~3.3]{nitakagi_cpam}) we obtain
\[
\begin{split}
&\int_{\R^N_+}(-\Delta w)z_N^2\partial_N w\,dz=\int_{\R^N_+}\nabla w\cdot \nabla (z_N^2\partial_N w)\,dz\\
&=2\int_{\R^N_+}z_N(\partial_N w)^2\,dz+\frac12\int_{\R^N_+}z_N^2\partial_N|\nabla w|^2\,dz=4\gamma- \int_{\R^N_+}z_N|\nabla w|^2\,dz=-(N-3)\gamma.
\end{split}
\]
On the other hand, let us denote the radius of $B$ as $\overline R=\omega_N^{-1/N}$. Using the divergence theorem, the definition of $m$ and $\gamma_1$, and the fact that $w$ is radial, we obtain
\begin{multline*}
\int_{\R^N_+}mwz_N^2\partial_Nw\,dz
=\frac{1}{2}\int_{B^+}z_N^2\partial_Nw^2\,dz-\frac{ \beta}{2}\int_{\R^N_+\setminus B^+}z_N^2\partial_Nw^2\,dz\\
=-\int_{\R^N_+}mw^2z_N\,dz+\frac{1+\beta}{2}\int_{\partial B^+}z_N^2 w^2\frac{z\cdot e_N}{|z|}\,d\mathcal H^{N-1}
=-\gamma_1+\frac{1+\beta}{N+1}\omega_{N-1}\overline R^{N+1}w^2(\overline R)
\end{multline*}
where we evaluated
\[
\int_{\partial B^+}z_N^2\frac{z\cdot e_N}{|z|}\,d\mathcal H^{N-1}=
\int_{B^+}\partial_N(z_N^2)\,dz
=\frac{2}{N+1}\omega_{N-1}\overline R^{N+1}.
\]
As a consequence, \eqref{eq:testaz^2} is equivalent to
\begin{equation}\label{eq:intermedia}
\mu\gamma_1-(N-1)\gamma=-2\gamma+\mu\frac{1+\beta}{N+1}\omega_{N-1}\overline R^{N+1}w^2(\overline R).
\end{equation}
To get rid of the dependence on $w(\overline R)$ we use the Pohozaev identity: testing the equation
with $\nabla w\cdot z$ we obtain\[
\int_{\R^N_+}(-\Delta w)\nabla w\cdot z\,dz=\mu\int_{\R^N_+}mw\nabla w\cdot z\,dz.
\]
On the left hand side, using the divergence theorem and the symmetry of $D^2w$, we get
\[
\int_{\R^N_+}(-\Delta w)\nabla w\cdot z\,dz=\int_{\R^N_+}|\nabla w|^2\,dz+\frac12\int_{\R^N_+}\nabla |\nabla w|^2\cdot z\,dz=\left(1-\frac{N}{2}\right)\int_{\R^N_+}|\nabla w|^2\,dz.
\]
On the other hand, for the right hand side, we use again the divergence theorem and the definition of $m$. Recalling that $\nabla w$ has a jump across $\partial B$, we obtain\[
\begin{split}
\int_{\R^N_+}mw\nabla w\cdot z\,dz
&=-\frac{N}{2}\int_{\R^N_+}mw^2\,dz+\frac{1+\beta}{2}\int_{\partial B^+}w^2(z)z\cdot \frac{z}{|z|}\,d\mathcal H^{N-1}\\
&
=-\frac{N}{2}\int_{\R^N_+}mw^2\,dz+\frac{1+\beta}{4}N\omega_N\overline R^{N}w^2(\overline R).
\end{split}
\]
All in all,
\[
\mu\int_{\R^N_+}mw^2\,dz=\int_{\R^N_+}|\nabla w|^2\,dz=\frac{\mu(1+\beta)}{4}N\omega_N\overline R^{N}w^2(\overline R).
\]
By plugging it into~\eqref{eq:intermedia} and recalling that $\overline R=\omega_N^{-1/N}$, we finally have the claim~\eqref{eq:technical2}.
\end{proof}

To conclude the proof, we need the following result.
\begin{lemma}[{\cite[Lemma~4.10]{mapeve}}]\label{le:MPV4.10}
Assume that, for positive constants $a,b,c,d$,
\[
\delta = a r^N \left( 1 - br + o(r)\right),\qquad
\nu = c r^{-2} \left( 1 - dr + o(r)\right),\qquad \text{as }r\to0^+.
\]
Then
\[
\nu = c a^{2/N}  \delta^{-2/N} \left( 1 -  \frac{a^{-1/N} (2b + Nd)}{N} \,\delta^{1/N} + o(\delta^{1/N})\right)\qquad \text{as }\delta\to0^+.
\]
\end{lemma}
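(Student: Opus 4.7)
The plan is to invert the first relation to express $r$ as a function of $\delta$, and then substitute this expansion into the second relation. The argument is purely algebraic manipulation via first-order Taylor expansions of $(1+x)^\alpha$; the only thing one must be careful with is tracking which terms contribute to the order $\delta^{1/N}$ correction.

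First I would solve $\delta=ar^N(1-br+o(r))$ for $r$. Isolating, $r^N=(\delta/a)(1-br+o(r))^{-1}=(\delta/a)(1+br+o(r))$, and extracting the $N$-th root via $(1+x)^{1/N}=1+x/N+o(x)$ gives
\[
r=a^{-1/N}\delta^{1/N}\bigl(1+\tfrac{b}{N}r+o(r)\bigr).
\]
Since $r\sim a^{-1/N}\delta^{1/N}$ to leading order, this self-consistency yields the explicit expansion
\[
r=a^{-1/N}\delta^{1/N}\Bigl(1+\tfrac{b}{N}\,a^{-1/N}\delta^{1/N}+o(\delta^{1/N})\Bigr).
\]

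Next I would square-invert to get $r^{-2}$: using $(1+x)^{-2}=1-2x+o(x)$,
\[
r^{-2}=a^{2/N}\delta^{-2/N}\Bigl(1-\tfrac{2b}{N}\,a^{-1/N}\delta^{1/N}+o(\delta^{1/N})\Bigr),
\]
and, since $r\sim a^{-1/N}\delta^{1/N}$, the factor $(1-dr+o(r))$ equals $1-d\,a^{-1/N}\delta^{1/N}+o(\delta^{1/N})$.

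Finally, multiplying these two expansions and keeping only terms up to order $\delta^{1/N}$,
\[
\nu=c\,r^{-2}(1-dr+o(r))=c\,a^{2/N}\delta^{-2/N}\Bigl(1-a^{-1/N}\Bigl(\tfrac{2b}{N}+d\Bigr)\delta^{1/N}+o(\delta^{1/N})\Bigr),
\]
which is exactly the claim after rewriting $\tfrac{2b}{N}+d=\tfrac{2b+Nd}{N}$. There is no genuine obstacle here; the only place where one can slip up is the sign in the inversion step (from $(1-br+o(r))^{-1}=1+br+o(r)$), which flips the correction in $r$ relative to that in $\delta$ and is responsible for the final coefficient $-(2b+Nd)/N$ rather than something else.
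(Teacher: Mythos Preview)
Your proof is correct. The paper does not actually prove this lemma; it quotes it verbatim from \cite{mapeve} and uses it as a black box, so there is no proof in the paper to compare against. Your argument---invert the first expansion to get $r$ in terms of $\delta^{1/N}$, then substitute into the second---is the natural and essentially unique way to obtain the result, and all the steps are handled correctly.
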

\begin{proof}[Proof of Theorem~\ref{th:boundabove}]
Recalling Lemma \ref{le:rdelta}, Corollary \ref{cor:boundabove} and Lemma \ref{le:technical}, 
up to now we have obtained the following relations:\[
\begin{split}
\delta&=\frac12r^N\left(1-\frac{2}{N+1}\omega_{N-1}\omega_N^{-\frac{N+1}{N}}\alpha\,r+o(r)\right),\\
\frac{\int_\Omega|\nabla \varphi_\delta|^2}{\int_\Omega m_\delta\varphi_\delta^2} &=\frac{\mu(B)}{r^2}\left\{1-\alpha r\left[\frac{2\gamma}{\mu(B)\int_{\R^N_+}mw^2}-4\omega_N^{-\frac{N+1}{N}}\frac{\omega_{N-1}}{N(N+1)}\right]+o(r)\right\}.
\end{split}
\]
To merge them together and deduce the claim, we apply Lemma~\ref{le:MPV4.10} with the 
obvious choice of the parameters.
In particular we obtain
\[
\begin{split}
\frac{2b}{N}+d&=\frac{4}{N(N+1)}\omega_{N-1}\omega_N^{-\frac{N+1}{N}}\alpha+\alpha \left[\frac{2\gamma}{\int_{\R^N_+}|\nabla w|^2}-4\omega_N^{-\frac{N+1}{N}}\frac{\omega_{N-1}}{N(N+1)}\right]\\
&=\frac{2\alpha\gamma}{\int_{\R^N_+}|\nabla w|^2}.
\end{split}
\]
Since $\varphi_\delta$ is an admissible competitor for $\od(\delta)$, we obtain\[
\od(\delta)\leq \frac{\int_\Omega|\nabla \varphi_\delta|^2}{\int_\Omega m_\delta\varphi_\delta^2}=2^{-2/N}\mu(B)\delta^{-2/N}\left(1-2^{1/N}\frac{2\alpha\gamma}{\int_{\R^N_+}|\nabla w|^2}\delta^{1/N}+o(\delta^{1/N})\right).\qedhere
\]
\end{proof}

\section{Blow-up argument}\label{sec:blowup}

This section is mainly devoted to the proof of Theorem \ref{th:intro_D}. We follow 
the approach introduced in~\cite[Section 4]{nitakagi_cpam}, based on a blow-up analysis. Differently from \cite{nitakagi_cpam}, we cannot deal directly with local maximum points, and we are forced to consider only global maximizers, to avoid the vanishing of the blow-up sequence.

Let $\od(\delta)$ (introduced in Definition \ref{def:od}) be achieved by an open set 
$D_{\delta}$, and let $u_\delta\in H^1(\Omega)$ be the $L^2$-normalized, positive principal 
eigenfunction, having global maximum at $P_\delta\in\overline{\Omega}$:
\[
\od(\delta) = \la(D_{\delta}), \qquad \int_{\Omega}u_\delta^2=1,\qquad 
u(P_\delta)=\max_{\overline{\Omega}} u.
\]
First we  show that $\dist(P_{\delta},\partial 
\Omega)=o(\delta^{1/N})$ as $\delta\to0$ (see Lemma \ref{le:step1}, \ref{le:step2}); this allows to prove that
\[
\liminf_{\delta \to 0} \od(\delta)\cdot \delta^{-2/N} \geq 2^{-2/N}I_{\Mcal}=I_{\Mcal_2}
\]
which, together with Remark \ref{rem:M1M2}, yields Theorem \ref{thm:bound_intro}. 
At the same time, this first blow-up procedure allows to obtain a strong non-vanishing property (Proposition \ref{prop:nonvanish}), which in turn allows to deal with local maximizers (Lemma
\ref{lem:massimivicini}). 
Then we 
show that $P_{\delta}$ actually belongs to 
$\partial \Omega$ and it is unique.
As a consequence the qualitative properties of $D_{\delta}$ stated in Theorem~\ref{th:intro_D} 
follow. 

We first show that $\dist(P_\delta,\partial \Omega)=O(\delta^{1/N})$.
\begin{lemma}\label{le:step1}
There exists $C>0$ such that, for $\delta$ small enough,
\[
\dist(P_\delta,\partial \Omega)\leq C\delta^{1/N}.
\]
\end{lemma}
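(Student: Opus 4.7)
The plan is to argue by contradiction, exploiting a blow-up analysis around $P_\delta$ together with the mismatch between the upper bound $\limsup_{\delta\to 0}\od(\delta)\delta^{2/N}\leq 2^{-2/N}I_{\Mcal}=I_{\Mcal_2}$ from Remark~\ref{rem:M1M2} and the lower Rayleigh threshold $I_{\Mcal_1}=I_{\Mcal}$ produced by the full-space limit problem of Theorem~\ref{thm:limit_pb}. Since $2^{-2/N}I_{\Mcal}<I_{\Mcal}$ by Remark~\ref{rmk:scaling}, a genuinely full-space blow-up limit at $P_\delta$ would be incompatible with this upper bound.

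Suppose then, for contradiction, that along some sequence $\delta_k\to 0$ one has $\eta_k:=\delta_k^{-1/N}\dist(P_k,\partial\Omega)\to+\infty$. Setting $\eps_k:=\delta_k^{1/N}$, define the rescaling
\[
v_k(y):=\frac{u_{\delta_k}(P_k+\eps_k y)}{u_{\delta_k}(P_k)},\qquad y\in\Omega_k:=\eps_k^{-1}(\Omega-P_k)\supset B_{\eta_k}(0),
\]
so that $0\leq v_k\leq v_k(0)=1$ and $-\Delta v_k=\mu_k\tilde m_k v_k$ in $\Omega_k$, with $\mu_k:=\delta_k^{2/N}\od(\delta_k)$, $\tilde m_k:=\ind{\tilde D_k}-\beta\ind{\tilde D_k^c}$, and $\tilde D_k:=\eps_k^{-1}(D_{\delta_k}-P_k)$ having $|\tilde D_k|=1$. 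Remark~\ref{rem:M1M2} gives $\mu_k\leq 2^{-2/N}I_{\Mcal}+o(1)$.

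Testing the equation for $v_k$ against $v_k$ itself, together with $0\leq v_k\leq 1$ and $|\tilde D_k|=1$, yields
\[
\int_{\Omega_k}|\nabla v_k|^2=\mu_k\int_{\Omega_k}\tilde m_k v_k^2\leq\mu_k,\qquad \beta\int_{\tilde D_k^c}v_k^2\leq\int_{\tilde D_k}v_k^2\leq 1,
\]
so, after trivial extension, $\{v_k\}$ is bounded in $H^1(\R^N)$. Combined with $C^{1,\alpha}_{\mathrm{loc}}$ elliptic estimates, up to a subsequence $v_k\rightharpoonup w$ in $H^1(\R^N)$ and locally uniformly, $\mu_k\to\mu\in[0,2^{-2/N}I_{\Mcal}]$, and $\tilde m_k\stackrel{*}{\rightharpoonup}m$ in $L^\infty_{\mathrm{loc}}(\R^N)$ weakly-$*$, with $-\beta\leq m\leq 1$ and $\int_{\R^N}(m+\beta)\leq 1+\beta$ (this last by weak-$*$ convergence of the nonnegative functions $\tilde m_k+\beta$ on an exhausting sequence of compacts); in particular $m\in\Mcal$. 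Passing to the distributional limit in the equation against $\phi\in C^\infty_c(\R^N)$ produces $-\Delta w=\mu m w$ in $\R^N$. Since $w(0)=1$ and $0$ is the only constant in $H^1(\R^N)$, $w$ is non-constant and $\int_{\R^N}|\nabla w|^2>0$; consequently $\mu>0$, since otherwise $w$ would be a bounded harmonic $H^1(\R^N)$-function, hence zero. Testing the limit equation against $w\chi_R^2$ with a standard cutoff $\chi_R$ and letting $R\to+\infty$ then yields $\int_{\R^N}|\nabla w|^2=\mu\int_{\R^N}mw^2$, so that $\int_{\R^N}mw^2>0$ as well. Consequently $w$ is admissible in the variational definition of $\mu(m)$, giving $\mu\geq\mu(m)\geq I_{\Mcal_1}=I_{\Mcal}$, which contradicts $\mu\leq 2^{-2/N}I_{\Mcal}$.

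The main technical obstacle is certifying that the limit $w$ is a legitimate test function for the variational problem defining $\mu(m)$, i.e.\ $w\in H^1(\R^N)$ with $\int_{\R^N}mw^2>0$. The first property follows from the uniform $H^1$-bound on $\{v_k\}$; the second is more delicate, because $m$ is only a weak-$*$ limit and the positive region $\tilde D_k$, though of fixed measure $1$, could in principle spread out, preventing a direct passage to the limit in $\int \tilde m_k v_k^2$. The scheme above sidesteps this by extracting positivity of $\int mw^2$ directly from the limit equation, once $\mu=0$ has been ruled out via the normalization $w(0)=1$.
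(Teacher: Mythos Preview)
Your proof is correct and follows essentially the same blow-up contradiction as the paper's: rescale around $P_\delta$, show the limit weight lies in $\Mcal_1$, and contradict the upper bound $\mu\le I_{\Mcal_2}<I_{\Mcal}$ from Remark~\ref{rem:M1M2}. The only cosmetic difference is the normalization: you divide by $u_{\delta_k}(P_k)$, which makes $w(0)=1$ (hence non-triviality of the limit is immediate and the global $H^1$ bound comes from $0\le v_k\le 1$, $|\tilde D_k|=1$), whereas the paper multiplies by $\delta_k^{1/2}$ so that $\|v_j\|_{L^2(\Omega_j)}=1$ and recovers non-triviality from the inequality $\int_{D_j}v_j^2>\beta/(\beta+1)$.
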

\begin{proof}
We argue by contradiction assuming that there is a sequence $\delta_j\to 0$ such that 
\begin{equation}\label{eq:assurdo}
\rho_j:=\frac{\dist(P_j,\partial \Omega)}{\delta_j^{1/N}}\to+\infty,\qquad \text{as }j\to+\infty,
\end{equation}
with $P_j:=P_{\delta_j}$. We prove the lemma by a blow-up procedure, in several steps.

\emph{Step 1: convergence of the blow-up sequence.} Introducing the rescaled sets and functions
\begin{equation}\label{eq:defscalings}
\Omega_j:=\frac{\Omega-P_j}{\delta_j^{1/N}},\quad D_j:=\frac{D_{\delta_j}-P_j}{\delta_j^{1/N}},\quad m_j:=\ind{D_j}-\beta\ind{\Om_\delta\setminus D_j},\quad v_j(z):=\delta_j^{1/2}u_{\delta_j}(P_j+\delta_j^{1/N}z),
\end{equation}
 it is easy to check that 
 \[
|\Omega_j|=\frac{|\Omega|}{\delta_j},\quad |D_j|=1,\quad
\begin{cases}
-\Delta v_j=\lambda_j m_j v_j &\text{ in }\Om_j\\
\partial_\nu v_j=0 &\text{ on }\partial \Omega_j ,\\
\end{cases}
\qquad \lambda_j=\frac{\int_{\Om_j}|\nabla v_j|^2}{\int_{\Om_j}m_j v_j^2}
=\delta_j^{2/N}\od(\delta_j).
\]
By definition,   $\rho_j=\dist(0,\partial \Omega_j)$, so that the ball $B_{\rho_j}$ centered at the origin is contained in $\Omega_j$ for all $j$. Moreover, by definition of $m_j$ and Remark~\ref{rem:M1M2} we infer that, up to (not relabeled) subsequences, 
\begin{equation}\label{eq:mconv}
m_j\rightharpoonup m,\quad \text{weakly $*$ in $L^\infty_{\loc}(\R^N)$},\qquad
\od(\delta_j)\delta_{j}^{2/N}=\la_j\to \lambda\in[0,I_{\Mcal_2}],\quad
\text{as $j\to+\infty$.}
\end{equation}
On the other hand, thanks to the normalization of $u_\delta$, we have 
\begin{equation}\label{eq:normaL2}
\int_{\Omega_j}v^2_j(z)\,dz=\int_{\Omega}u^2_{\delta_j}(x)\,dx=1.
\end{equation}
Let us now fix $B_r=B_r(0)$: we observe that, for all $j$ sufficiently big, it holds $B_r\subset B_{\rho_j}\subset \Omega_j$, as $\rho_j\to+\infty$ by \eqref{eq:assurdo}.
Then, we can  compute 
\begin{equation}\label{eq:boundvj}
\int_{B_r}|\nabla v_j|^2\leq \int_{\Omega_j}|\nabla v_j|^2=\int_{\Omega_j}\lambda_j m_j v_j^2\leq C \int_{\Omega_j}v_j^2=C,
\end{equation}
where we used \eqref{eq:mconv}, and the fact that $\|m_j\|_{L^\infty(\Omega_{j})}\leq 1$.

As a consequence of~\eqref{eq:boundvj} and of~\eqref{eq:normaL2}, we have a bound on the $H^1(B_r)$ norm of $v_j$, uniform in $r$. Thus, there exists $v\in H^{1}(\R^{N})$ such  that
\[
v_j\rightharpoonup v,\qquad \text{weakly in $H^1_{\rm loc}(\R^{N})$, strongly in $L^p_{\rm loc}(\R^{N})$, for every $p\in \left[1,2^*\right)$, 
as $j\to+\infty$}. 
\]
where $2^*$ is the usual Sobolev exponent, $v\ge0$ a.e. in $\R^N$, and
 \begin{equation}\label{eq:stimalamv}
\|\lambda_j m_j v_j\|_{L^p(B_r)}\leq \la_j\|m_j\|_{L^\infty(B_r)}\|v_j\|_{L^p(B_r)}\leq C\|v_j\|_{L^p(B_r)}.
\end{equation}
Classical elliptic estimates, (see e.g.\ ~\cite[Theorem~9.11]{gilbargtrudinger}) yield 
\[
\|v_j\|_{W^{2,p}(R')}\leq C \Big(\|v_j\|_{L^p(B_r)}+\|\lambda_j m_j v_j\|_{L^p(B_r)}\Big),\qquad \text{for all }2\leq p< 2^*\text{ and }\overline{R'}\subset B_r,
\] 
where $C$ depends only on $N, p, B_r, R'$. Then \eqref{eq:stimalamv}, the Sobolev embedding 
and~\eqref{eq:boundvj} yield
\[
\|v_j\|_{W^{2,p}(R')}\leq C \|v_j\|_{L^p(B_r)}\leq C\|v_j\|_{H^1(B_r)}\leq C,
\quad \text{for all }2\leq p< 2^*\text{ and }\overline{R'}\subset B_r,
\]
where $C$ again depends on $N,p,B_r, R'$.

Now, if $N=2,3$ there exists a $p\in(N,2^*)$; then Morrey's Theorem implies that $W^{2,p}(R')$ is 
continuously embedded in $C^{1,\alpha}(R')$, $\alpha= 1-N/p$. 
By Ascoli-Arzel\`a's Theorem one deduces that the sequence $(v_j)_j$ is precompact in 
$C^{1,\vartheta}(R')$ for all $\vartheta<\alpha$.
Thus, up to subsequences, 
\[
v_j\to v \qquad \text{in $C^{1,\vartheta}_{\rm loc}(\R^N)\cap H^1_{\rm loc}(\R^N)$}, 
\qquad
v\in H^1(\R^N)
\]
as $j \to +\infty$, by arbitrarity of $B_r$ and $R'$ and by the uniform bounds obtained above. 
On the other hand, if $N\ge 4$, then we can use a bootstrap argument to prove in a finite number of steps 
the same uniform bound as above. 

Finally, let $\varphi\in C^\infty_c(\R^N)$ and $j$ sufficiently large so that 
$K:=\supp(\varphi)\subset B_{\rho_j}\subset \Omega_j$. Then 
\[
\int_K \nabla v_j\cdot \nabla \varphi=\lambda_j\int_K m_j v_j\varphi,
\]
and taking into account \eqref{eq:mconv} we obtain that the limit function $v$ is a weak solution to 
\begin{equation}\label{eq:eqlimite}
-\Delta v= \lambda m v,\qquad \text{in }\R^N. 
\end{equation}

\emph{Step 2: the blow-up limit $v$ is non-trivial and $\lambda>0$.} 
By the $L^2$ normalization, 
\[
0<\int_{\Omega_j} m_jv_j^{2}=
\int_{D_j} v_j^{2}-\beta \int_{\Omega_j\setminus D_j} v_j^{2}=
\int_{D_j} v_j^{2}-\beta \left(1-\int_{D_j} v_j^{2}\right) 
=(\beta+1)\int_{D_j} v_j^{2}-\beta.
\] 
Since $P_j$ is a global maximum point we infer
\begin{equation}\label{eq:intpositivo}
0<\frac{\beta}{\beta+1} < \int_{D_j}v_j^2 \le |D_j| \sup_{D_j}v_j^2 = v_j^2(0)\to v^2(0)
\end{equation}
by pointwise convergence, thus $v$ is non-trivial. Testing \eqref{eq:eqlimite} with $v$ 
we obtain that $\lambda>0$. 

\emph{Step 3: the limit weight $m$ is admissible for $I_{\Mcal}$ (see \eqref{eq:defMk}).} 
First, we take $\vfi=\ind{E}$
with $E:=\{x\in \R^{N} : m< -\beta\}\cap B_{r}(0)$ for $r>0$; then, by \eqref{eq:mconv},
\[
0\leq \int_{\Omega_j}(m_j+\beta)\ind{E}\to \int_{E}
m+\beta \leq 0,
\]
showing that ${m}\geq -\beta$; in an analogous way it is possible to show that 
$m \leq 1$.
In addition, taking as test function $v$ in \eqref{eq:eqlimite} we obtain
\[
0<\int_{\R^{N}}|\nabla v|^2=\lambda \int_{\R^{N}}mv^2,
\]
showing that $|\{x\in \R^{N} :  m>0\}|>0$. Finally,  for all $r>0$ 
\[
\int_{B_r}m+\beta =\lim_{j\to+\infty} \int_{\R^{N}}(m_j+\beta)\ind{B_{r}}\leq (1+\beta)|D_j|=
 1+\beta,
\]
hence, taking $r\to +\infty$ and using monotone convergence, 
\begin{equation*}
\int_{\R^{N}}m+\beta\leq (1+\beta).
\end{equation*}

\emph{Step 4: conclusion.} By the previous steps and Remark \ref{rmk:scaling} we have
\[
\lambda=\frac{\int_{\R^N}|\nabla v|^2}{\int_{\R^N}m v^2}\geq I_{{\mathcal M}}=
2^{\frac{2}{N}}I_{{\mathcal M}_{2}},
\]
which contradicts \eqref{eq:mconv}.
\end{proof}
\begin{remark}\label{rem:maxglobvsloc}
We notice that Step 2 is  the only point in the proof of Lemma \ref{le:step1} in which we need 
that $P_\delta$ is a \emph{global} maximum point. Consequently, the blow-up argument works also for local maxima, up to the fact that, at this point, we can not exclude that the blow-up limit is trivial.
\end{remark}

Next we improve the previous result, showing that $\dist(P_\delta,\partial \Omega)=o(\delta^{1/N})$.

\begin{lemma}\label{le:step2}
Let $P_\delta$ be a global maximum point for $u_\delta$. Then
\[
\limsup_{\delta\to 0}\frac{\dist(P_\delta,\partial \Omega)}{\delta^{1/N}}=0.
\]
\end{lemma}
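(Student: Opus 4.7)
The plan is to argue by contradiction. By Lemma \ref{le:step1}, $\rho_\delta := \dist(P_\delta,\partial\Omega)/\delta^{1/N}$ is bounded, so negation of the claim gives a sequence $\delta_j \to 0$ with $\rho_j \to \rho_0 \in (0,+\infty)$. Let $Q_j \in \partial\Omega$ realize $\dist(P_j,\partial\Omega)$ and, up to rotation, assume the outer unit normal at $Q_j$ tends to $-e_N$. I would perform the same blow-up as in Lemma \ref{le:step1} via the rescalings \eqref{eq:defscalings}; the key difference is that $\Omega_j$ no longer exhausts $\R^N$: using the $C^{2,\alpha}$ straightening diffeomorphism of Section \ref{sec:bfa} at $Q_j$, on every bounded ball the sets $\Omega_j$ converge to the half-space $H := \{z \in \R^N : z_N > -\rho_0\}$.

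Steps~1 and~2 of Lemma \ref{le:step1} then carry over with the obvious modifications (up-to-boundary elliptic estimates in place of purely interior ones): one obtains $v_j \to v$ in $C^{1,\vartheta}_{\rm loc}(\overline H) \cap H^1_{\rm loc}(\overline H)$ with $v \in H^1(H)$, $\lambda_j \to \lambda \in (0,I_{\Mcal_2}]$, and $m_j \to m$ weakly-$*$ in $L^\infty_{\rm loc}(H)$ with $-\beta \leq m \leq 1$ and $\int_H (m+\beta) \leq 1+\beta$. The limit $v$ solves $-\Delta v = \lambda m v$ in $H$ with $\partial_\nu v = 0$ on $\partial H$, attains a global maximum at $0$, and satisfies $v(0)^2 \geq \beta/(\beta+1) > 0$ thanks to the global maximality of $P_j$.

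The core new step is to reflect across $\partial H$. Setting
\[
\tilde v(z',z_N) := \begin{cases} v(z',z_N) & \text{if } z_N \geq -\rho_0, \\ v(z',-2\rho_0-z_N) & \text{if } z_N < -\rho_0, \end{cases}
\]
and defining $\tilde m$ analogously, the Neumann condition ensures $\tilde v \in H^1(\R^N)$ and $-\Delta \tilde v = \lambda \tilde m \tilde v$ weakly in $\R^N$, while $\int_{\R^N}(\tilde m + \beta) \leq 2(1+\beta)$, so $\tilde m \in \Mcal_2$. Consequently
\[
\lambda = \frac{\int_{\R^N}|\nabla \tilde v|^2}{\int_{\R^N}\tilde m\, \tilde v^2} \geq I_{\Mcal_2},
\]
and combined with $\lambda \leq I_{\Mcal_2}$ this forces equality and identifies $(\tilde m, \tilde v)$ as a minimizing pair for $I_{\Mcal_2}$.

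By Theorem \ref{thm:limit_pb} together with Remark \ref{rmk:scaling}, up to translation $\tilde m$ equals the bang-bang weight of a ball of Lebesgue measure $2$ and $\tilde v$ is, up to a positive scalar, its radial principal eigenfunction, strictly decreasing in the radial variable and thus with a unique maximum point located at the center of the ball. The symmetry $\tilde m(z',z_N) = \tilde m(z',-2\rho_0-z_N)$ forces this center to lie on $\{z_N = -\rho_0\}$, whereas the unique maximum of $\tilde v$ sits at $0$; hence $0 \in \{z_N = -\rho_0\}$, i.e., $\rho_0 = 0$, a contradiction. The main technical obstacle I anticipate is making the convergence $\Omega_j \to H$ precise enough to obtain $C^{1,\vartheta}$ convergence of $v_j$ \emph{up to the boundary} and preserve the Neumann condition in the limit; this should be handled by composing with the boundary-straightening diffeomorphism of Section \ref{sec:bfa} and then extending $v_j$ by reflection on the straightened side, reducing to interior estimates for an equation with bounded measurable coefficients whose principal part converges uniformly to the Laplacian on compact sets.
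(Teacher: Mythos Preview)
Your proposal is correct and follows essentially the same approach as the paper: straighten the boundary via the diffeomorphism of Section~\ref{sec:bfa}, reflect across the flat piece, pass to the limit, identify the blow-up limit as an optimal pair for $I_{\Mcal_2}$ (radial, centered at its unique maximum), and derive the contradiction from the reflection symmetry across $\{z_N=-\rho_0\}$ versus the maximum at the origin. The only cosmetic difference is that you phrase the argument as ``take the limit on the half-space, then reflect the limit,'' while the paper reflects each term of the sequence first and then passes to the limit on $\R^N$; you yourself observe that the technical implementation of the boundary convergence forces the latter, which is exactly what the paper does.
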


\begin{proof}
Again, we use a blow up argument. Since here we work near the boundary 
$\partial \Omega$, we exploit the diffeomorphism introduced 
in Section \ref{sec:bfa}. We subdivide the proof into two steps.

\emph{Step 1: blow-up procedure.} Let us consider any sequence $P_k=P_{\delta_k}$ of global 
maximum points for $u_{\delta_k}$, $\delta_k\to0$. W.l.o.g. we can assume that $P_k\in\Omega$, 
for all $k$. Then, up to subsequences, thanks to Lemma~\ref{le:step1}, we have that 
$P_k\to P\in\partial \Omega$, as $k\to+\infty$.  

As in Section \ref{sec:bfa}, we assume that $P=0$, that the exterior normal to $\partial\Omega$ at $0$ 
coincides with $-e_N$,  and we apply the diffeomorphism $\Psi=\Phi^{-1}$ (see \eqref{eq:defPsi}), writing $Q_k=\Psi(P_k)\in B^+_\NTr$.
The transformed eigenfunction is defined by 
\begin{equation}\label{eq:vk}
v_k(y):=u_{\delta_k}(\Phi(y)),\qquad y\in  \overline B^{+}_{2\NTr}.
\end{equation}
It is then easy to extend the function $v_k$ by symmetry in the whole $B_{2\NTr}$  by defining
\begin{equation}\label{eq:vtildek}
\widetilde v_k(y):=
\begin{cases}
v_k(y),\qquad &\text{if }y_N\geq 0,\\
v_k(y',-y_N),\qquad &\text{if }y_N<0.
\end{cases}
\end{equation}
At this point, we can introduce the blow-up sequences, for $\delta_k>0$,
\begin{equation}\label{eq:wk}
\Omega_k=\frac{B_{2\NTr}-Q_k}{\delta_k^{1/N}},\qquad w_k(z)=\delta_k^{1/2}\,\widetilde v_k\left(Q_k+\delta_k^{1/N}z\right),\qquad z\in \overline{B_{\NTr\delta_k^{-1/N}}}.
\end{equation}
Let us denote 
\begin{equation}\label{eq:alphak}
Q_k=(q'_k,\alpha_k\delta_k^{1/N}), \quad \text{with $ q_k'\in \R^{N-1}$, $\alpha_k>0$ and   $Q_{k}\to 0$.}
\end{equation}
By Lemma~\ref{le:step1} we deduce that the sequence $(\alpha_k)_k$ is  bounded, so that $\alpha_k\to\bar\alpha\ge0$. Then, to prove the lemma we have to show that $\bar \alpha= 0$.
We notice that 
\begin{equation}\label{eq:derivataNnulla}
\frac{\partial \widetilde v_k}{\partial y_N}=0 \text{ on }\{y_N=0\},
\qquad\text{i.e.\ }
\qquad
\frac{\partial w_k}{\partial z_N}=0 \text{ on }\{z_N=-\alpha_k\},
\end{equation}
therefore \[
w_k\in C^{1,\vartheta} \Big(\overline B_{\NTr\delta_k^{-1/N}}\setminus \{z_N=-\alpha_k\}\Big)\cap C^1\Big(\overline B_{\NTr\delta_k^{-1/N}}\Big)\cap H^1\Big(B_{\NTr\delta_k^{-1/N}}\Big),
\]
thanks to the smoothness of the diffeomorphism and to the regularity of $u_{\delta_k}$. 
Moreover the $L^2(\Omega)$ normalization of $u_{\delta_k}$ entails 
 \[
\int_{B_{\NTr\delta_k^{-1/N}}}w_k^2(z)\,dz\leq C,
\]
for some constant $C>0$ independent of $k$, and
$w_k$ solves, for almost all $z\in B_{\NTr \delta_k^{-1/N}}\setminus \{z_N=-\alpha_k\}$, the elliptic equation 
\begin{equation}\label{eq:lapltrasformato}
-\sum_{i,j=1}^N a_{ij}^k(z)\frac{\partial^2 w_k}{\partial z_i\partial z_j}(z)-\delta_k^{\frac1N}\sum_{j=1}^Nb_j^k(z)\frac{\partial w_k}{\partial z_j}(z)=\delta_k^{\frac2N}\od(\delta_k)m_k(z)w_k(z),
\end{equation}
where we denote, for $z\in B_{\NTr \delta_k^{-1/N}}$,
\begin{equation}\label{eq:mk}
m_k(z)=
\begin{cases}
m_{\delta_k}\Big(\Phi(Q_k+\delta_k^{1/N}z)\Big),\qquad &\text{if }z_N\geq -\alpha_k,\\
m_{\delta_k}\Big(\Phi(q_k'+\delta_k^{1/N}z',-\delta_k^{1/N}(\alpha_k+z_N)\Big),\qquad &\text{if }z_N< -\alpha_k.
\end{cases}
\end{equation}
The coefficients $a_{ij}^k$ and $b_j^k$ are defined as follows. First of all, let 
\[
a_{ij}(y)=\sum_{l=1}^N\frac{\partial \Psi_i}{\partial x_l}(\Phi(y))\frac{\partial \Psi_j}{\partial x_l}(\Phi(y)),\quad 
b_j(y)=(\Delta \Psi_j)(\Phi(y)),
\qquad1\leq i,j\leq N.
\]
Then we set \[
\begin{split}
a_{ij}^k(z)&=
\begin{cases}
a_{ij}(Q_k+\delta_k^{1/N}z), &\text{ if }z_N\geq -\alpha_k,\\
(-1)^{\delta_{jN}+\delta_{iN}}a_{ij}\Big(q'_k+\delta_k^{1/N}z',-(\alpha_k+z_N)\delta_k^{1/N}\Big), &\text{ if }z_N<-\alpha_k,
\end{cases}\\
b_j^k(z)&=
\begin{cases}
b_j(Q_k+\delta_k^{1/N}z), &\text{ if }z_N\geq -\alpha_k,\\
(-1)^{\delta_jN}b_j\Big(q'_k+\delta_k^{1/N}z',-(\alpha_k+z_N)\delta_k^{1/N}\Big), &\text{ if }z_N<-\alpha_k,
\end{cases}
\end{split}
\]
where $\delta_{nm}$ is the Kronecker delta. With elliptic regularity arguments as in Lemma \ref{le:step1}, we deduce that \[
w_k\to w_\infty,\qquad \text{in }C^{1,\vartheta}_\loc(\R^N) \cap W^{2,p}_\loc(\R^N),
\]
for $\vartheta<1$, $p<+\infty$. Indeed, all the coefficients $a_{ij}^k,b_j^k$ are Lipschitz continuous with uniform constants with respect to $k$ in $B_{\NTr\delta_k^{-1/N}}$, except $b_N^k$, but the product $b_N^k(z)\partial_{z_N}w_k(z)$ is Lipschitz continuous uniformly in $k$, thanks to~\eqref{eq:derivataNnulla}, and it can be treated as a right hand side term to obtain 
the Schauder estimates. Moreover, $w_\infty\in C^{1,\vartheta}(\R^N)\cap W^{2,p}(\R^N)$ is 
non-trivial (see Step 2 in the proof of Lemma \ref{le:step1}; again, this is the only part of the proof in which we need $P_\delta$ to be a \emph{global} maximum point).

Now we recall that, as $k\to +\infty$, $Q_k\to 0$, $\delta_k^{2/N}\od(\delta_k)
\to\lambda\in(0,I_{\Mcal_2}]$, $m_{k}\stackrel{*}{\rightharpoonup} m$  weakly $*$ in 
$L^\infty_\loc(\R^N)$; moreover, $A^k\to D\Psi(0)=[D\Phi(0)]^{-1}=\Id$ and $\delta^{1/N}b^k\to0$. 
We infer that  $w_\infty$ satisfies
\begin{equation}\label{eq:lim_2}
\begin{cases}
-\Delta w_\infty =\lambda m w_\infty & \text{in } \R^N,\\
\max_{\R^N} w_\infty = w_\infty(0)\\
w_\infty(z',-z_N - \bar\alpha) = w_\infty(z',z_N+\bar\alpha),
\end{cases}
\end{equation}
because the maximum of $w_k$ is at $z=0$, and $\alpha_k\to\bar \alpha$.

\emph{Step 2: analysis of the limit problem.}
First of all, let us show that $m\in \mathcal M_2$ (see \eqref{eq:defMk}). 
\\
As in the  proof of Lemma \ref{le:step1}, from the properties of the weak $*$ $L^\infty_\loc(\R^N)$ convergence, one deduces that $-\beta\leq m(z)\leq 1$ for a.e. $z\in \R^N$ and that $|\{x\in \R^{N} : m>0\}|>0$.
On the other hand, it is more delicate to check that \[
\int_{\R^N} (m+\beta)\leq 2(1+\beta).
\]
First of all, we introduce: 
\begin{equation}\label{eq:Dk}
D_{\delta_k}=\{x\in \Omega : m_{\delta_k}(x)=1\},\qquad |D_{\delta_k}|=\delta_k,\qquad
\widetilde D_k:=\widetilde D_k^+ \cup \widetilde D_k^-,
\end{equation}
where
\[
\begin{split}
\widetilde D_k^+&=\left\{z\in B_{\NTr\delta_k^{-1/N}} : z_N\ge -\alpha_k,\ (z',z_N)\in \frac{\Psi(D_{\delta_k})-Q_k}{\delta_k^{1/N}}\right\},\\
\widetilde D_k^-&=\left\{z\in B_{\NTr\delta_k^{-1/N}} : z_N\le -\alpha_k,\ (z',-z_N)\in \frac{\Psi(D_{\delta_k})-(q'_k,-\alpha_k\delta_k^{1/N})}{\delta_k^{1/N}}\right\}.
\end{split}
\]
Let us fix a ball $B_r$ centered at the origin and of radius $r>0$, and $\delta_k$ small. We need to estimate the measure of the set ${\widetilde D_k}\cap B_r$. To this aim we notice that, by reflection,
\[
|\widetilde D_k^-\cap B_r(0)| = |\widetilde D_k^+\cap B_r (0',-2\alpha_k)|\le |\widetilde D_k^+\cap B_r (0)|:
\]
this because $\widetilde D_k^+ \subset \{z_N\ge-\alpha_k\}$ and $B_r (0',-2\alpha_k)\cap \{z_N\ge-\alpha_k\} \subset B_r (0)$. On the other hand, we have \[
\delta_k=\int_{\Omega}\ind{D_{\delta_k}}(x)\,dx\geq \int_{B_r\cap\{z_N\ge-\alpha_k\}}\delta_k\ind{D_{\delta_k}}\Big(\Phi(Q_k+\delta_k^{1/N}z)\Big)\,\det\Big(D\Phi(Q_k+\delta_k^{1/N}z)\Big)\,dz,
\]
by making the change of variable $z=\frac{\Psi(x)-Q_k}{\delta_k^{1/N}}$.
Then, by definition of $\widetilde D_k^+$ and using~\eqref{eq:lemmaA1},\[
\delta_k\geq\int_{B_r}\ind{{\widetilde D_k^+}}(z)\delta_k\Big(1-\alpha\delta_k^{1/N}(\alpha_k+z_N)+O\Big(|Q_k+\delta_k^{1/N}z|^2\Big)\Big)\,dz,
\]
where $\alpha_k$ is defined in 
\eqref{eq:alphak} (and $\alpha$ in 
\eqref{eq:alfagamma}). 
All in all, we obtain 
\begin{equation}\label{eq:stimaaltoDk}
|{\widetilde D_k}\cap B_r|\leq 2|{\widetilde D_k^+}\cap B_r| \leq 2\Big[1+C_1(r)\delta_k^{1/N}+C_2(r)\Big(|Q_k|^2+\delta_k^{2/N}|\Big)\Big]\le2+o(1),
\end{equation}
as $k\to+\infty$, since \[
C_1(r)=\alpha\int_{{\widetilde D_k}\cap B_r}(\alpha_k+|z_N|)\,dz\leq \alpha\int_{B_r}(\alpha_k+|z_N|)\,dz
\]
is uniformly bounded with respect to $k$, as the same is true for the sequence $(\alpha_k)$,
and $Q_k\to 0$.

At this point, exploiting  the weak $*$ $L^\infty(B_r)$ convergence of $m_k$ to $m$, we have 
\[
\begin{split}
\int_{B_r}(m+\beta)&=\lim_{k\to+\infty}\int_{B_r}(m_k+\beta)=\lim_{k\to+\infty}\int_{\R^N}(m_k+\beta)\ind{B_r}\\
&\leq \lim_{k\to+\infty}(1+\beta)|{\widetilde D_k}\cap B_r|\leq \lim_{k\to+\infty}2(1+\beta)\left[1+o_k(1)\right]=2(1+\beta).
\end{split}
\]
Then, by monotone convergence, we conclude\[
\int_{\R^N}(m+\beta)=\lim_{r\to+\infty}\int_{B_r}(m+\beta)\leq 2(1+\beta).
\]
Once $m\in \mathcal M_2$, we can argue as in Lemma \ref{le:step1}, showing that $\int_{\R^N}mw_\infty^2>0$ and finally, using 
Theorem \ref{th:boundabove},
\[
I_{\Mcal_2}\geq \lambda=\frac{\int_{\R^N}|\nabla w_\infty|^2}{\int_{\R^N}mw_\infty^2}\geq I_{\Mcal_2}.
\]
As a consequence, denoting with $w_{[2]}$ a (positive) eigenfunction associated to $I_{\Mcal_2}$, we have that $w_\infty(z)=c w_{[2]}(z+z_0)$ for some $c>0$ and $z_0\in \R^N$. Recalling that $w_{[2]}$ is radially symmetric, with a unique maximum point at $0$, we deduce that
$w_\infty$ is symmetric with respect to an hyperplane if and only if such hyperplane passes through its maximum point. Recalling \eqref{eq:lim_2} we obtain that  $\bar \alpha=0$ and the proof is concluded.
\end{proof}
\begin{remark}\label{re:limpro}
Note that in the  proof of Lemma \ref{le:step2} we also show  that, given any sequence $\delta_k\to0$, then some non-relabelled subsequences $(w_{k})_k$ and $(m_{k})_k$, as introduced in
\eqref{eq:wk},  \eqref{eq:mk} satisfy
\[
w_{k}\to w_{\infty},\quad \text{in } C^{1,\vartheta}_\loc(\R^N)\cap W^{2,p}_\loc(\R^N),\quad m_{k}\stackrel{*}{\rightharpoonup} m \quad \text{weakly *  in } L_{\loc}^{\infty}(\R^{N})
\]
with 
\[
\qquad w_{\infty}=cw_{[2]},\qquad
m=\ind{2^{1/N}B}-\beta\ind{2^{1/N}B^c},\]
where $c>0$ and $w_{[2]}$ 
is the principal eigenfunction
associated with $I_{\Mcal_2}$ 
(see Theorem  \ref{thm:limit_pb} and Remark \ref{rmk:scaling}).

Finally, by the bathtub 
principle applied to $u_\delta$ and the reflection argument, we have that there exist 
$t_k>0$ such that 
\begin{equation}\label{eq:dksopliv}
\widetilde D_k = \{z\in B_{\NTr\delta_k^{-1/N}} : m_k(z)=1\}
=
\{z : w_k(z)> t_k\},
\qquad
\partial \widetilde D_k=\{z : w_k(z)= t_k\}.
\end{equation}
\end{remark}

In the next lemma, we draw some consequences of equation \eqref{eq:stimaaltoDk} on $\widetilde D_k$ and $t_k$.
\begin{lemma}\label{le:improvedconv}
Under the previous notation:
\begin{enumerate}
\item Let $A\supseteq 2^{1/N}B$ be a bounded set. We have that
\begin{equation}\label{eq:limmis}
\lim_{k\to+\infty}|{\widetilde D_k}\cap A|=2.
\end{equation}
\item $\displaystyle \lim_k  t_k = 
 t_\infty:= \inf_{2^{1/N}B}w_\infty>0$.
\item For all $\eps\in (0,1)$, 
if $k$ is sufficiently large then there exists a connected component 
$\widetilde D_k^{\text{in}}$ of $\widetilde D_k$ satisfying
\begin{equation}\label{eq:BdentroDk}
2^{1/N}(1-\eps)B\subset {\widetilde D_k^{\text{in}}}\subset2^{1/N}(1+\eps)B.
\end{equation}
\end{enumerate}
\end{lemma}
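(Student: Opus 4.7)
The three claims follow from the convergence package collected in Remark \ref{re:limpro} (uniform convergence $w_k \to w_\infty$ on compacts, weak-$*$ convergence $m_k \stackrel{*}{\rightharpoonup} m$ in $L^\infty_\loc$), together with the super-level set identification $\widetilde D_k = \{w_k > t_k\}$ from \eqref{eq:dksopliv} and the strict radial monotonicity of $w_\infty = c w_{[2]}$ guaranteed by Theorem~\ref{thm:limit_pb}. For~(1), I simply test the weak-$*$ convergence of $(1+\beta)\ind{\widetilde D_k} = m_k + \beta$ against the compactly supported $\ind{A} \in L^1(\R^N)$, obtaining
$$
(1+\beta)\,|\widetilde D_k \cap A| = \int_A (m_k + \beta) \longrightarrow \int_A (m+\beta) = (1+\beta)\,|2^{1/N}B \cap A| = 2(1+\beta),
$$
since $A \supseteq 2^{1/N}B$ gives $|2^{1/N}B \cap A| = |2^{1/N}B| = 2$.

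For~(2) I first observe that $t_k$ is bounded: $0 < t_k < w_k(0) \to w_\infty(0) < +\infty$. Up to a (nonrelabeled) subsequence, $t_k \to t^* \in [0, w_\infty(0)]$. Since $w_\infty$ is continuous, radial and strictly decreasing in $r$, the level set $\{w_\infty = t^*\}$ is either empty, a point, or a sphere, hence of Lebesgue measure zero. Combining uniform convergence $w_k \to w_\infty$ on compacts with $t_k \to t^*$ yields the pointwise a.e.\ convergence $\ind{\widetilde D_k}(z) = \ind{\{w_k > t_k\}}(z) \to \ind{\{w_\infty > t^*\}}(z)$. On the other hand $\ind{\widetilde D_k} = (m_k + \beta)/(1+\beta) \stackrel{*}{\rightharpoonup} \ind{2^{1/N}B}$, so uniqueness of the limit (the a.e.\ limit of a bounded sequence is its weak-$*$ limit) forces $\{w_\infty > t^*\} = 2^{1/N}B$ up to null sets. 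By strict monotonicity this pins down $t^* = w_\infty\big|_{\partial(2^{1/N}B)} =: t_\infty$, which is positive because $w_\infty > 0$ is continuous and $\partial(2^{1/N}B)$ is compact. Since every subsequential limit equals $t_\infty$, the whole sequence converges.

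For~(3), strict monotonicity of $w_\infty$ in $r$ together with~(2) provides some $\eta>0$ with $w_\infty \ge t_\infty + 3\eta$ on $\overline{2^{1/N}(1-\eps)B}$ and $w_\infty \le t_\infty - 3\eta$ on the closed spherical shell $\Sigma := \{2^{1/N}(1+\eps/2) \le |z| \le 2^{1/N}(1+\eps)\}$. Uniform convergence on these compact sets combined with $t_k \to t_\infty$ then yields, for $k$ large, $w_k > t_k$ on $\overline{2^{1/N}(1-\eps)B}$ and $w_k < t_k$ on $\Sigma$. Define $\widetilde D_k^{\text{in}}$ to be the connected component of the open set $\widetilde D_k$ containing $2^{1/N}(1-\eps)B$. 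Open connected subsets of $\R^N$ are path-connected, so if $\widetilde D_k^{\text{in}}$ met $\R^N \setminus 2^{1/N}(1+\eps)B$ there would exist a path from the inner ball to its outside; such a path necessarily crosses $\Sigma \subset \widetilde D_k^c$, a contradiction. Hence $\widetilde D_k^{\text{in}} \subset 2^{1/N}(1+\eps)B$, proving \eqref{eq:BdentroDk}. The delicate step is the identification of $t^*$ in (2): it hinges on pairing the a.e.\ convergence of the indicators $\ind{\{w_k > t_k\}}$ with the independently determined weak-$*$ limit of $(m_k+\beta)/(1+\beta)$, which is only legitimate because the limit level set of $w_\infty$ has measure zero, a consequence of the spherical geometry of $w_\infty$.
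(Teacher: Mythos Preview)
Your proof is correct, and in parts (1) and (2) it is actually more streamlined than the paper's.

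For (1), you test the weak-$*$ convergence of $m_k+\beta=(1+\beta)\ind{\widetilde D_k}$ directly against $\ind{A}$, which immediately gives the limit. The paper instead combines the upper bound $\limsup_k|\widetilde D_k\cap A|\le 2$ from \eqref{eq:stimaaltoDk} with the lower semicontinuity of the $L^2$ norm under weak convergence to obtain the matching lower bound. Your route is shorter; note however that it relies on the identification $m=\ind{2^{1/N}B}-\beta\ind{(2^{1/N}B)^c}$ from Remark~\ref{re:limpro}, whose proof in turn uses \eqref{eq:stimaaltoDk}, so the dependence on that estimate is still present, just hidden upstream.

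For (2), the paper proceeds by two separate inequalities: $\liminf_k t_k\ge t_\infty$ by exhibiting points of $\partial\widetilde D_k$ inside slightly enlarged balls, and $\limsup_k t_k\le t_\infty$ by a contradiction argument using (1). Your argument is different in spirit: you extract a subsequential limit $t^*$, observe that $\ind{\{w_k>t_k\}}\to\ind{\{w_\infty>t^*\}}$ a.e.\ (since $\{w_\infty=t^*\}$ is a sphere by strict radial monotonicity), and then invoke uniqueness of weak-$*$ limits to match this with $\ind{2^{1/N}B}$. This is a clean way to pin down $t^*$ in one stroke; the key point you correctly flag is that the a.e.\ and weak-$*$ limits can only be compared because the limiting level set is null.

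For (3), your argument is essentially the paper's, with the cosmetic difference that you use a closed annulus $\Sigma$ rather than a single sphere to separate the inner component from the exterior; either works.
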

\begin{proof} 
We keep the same notation of Lemma \ref{le:step2} and Remark \ref{re:limpro}.

\emph{Step 1: proof of \eqref{eq:limmis}.}
Let $A$ be as in the statement. Since $A\subset B_r$, for some $r$, 
\eqref{eq:stimaaltoDk} yields
\begin{equation}\label{eq:boundlim}
\limsup_{k\to+\infty}|{\widetilde D_k}\cap A|\leq 2.
\end{equation}
Taking into account Remark \ref{re:limpro}, the lower semicontinuity of the norm with respect to the weak convergence in $L^{2}(A)$ and \eqref{eq:boundlim}, we obtain
\[
2(1+\beta)^2=\int_{A}(m+\beta)^2\leq \liminf_{k\to+\infty}\int_{A}(m_k+\beta)^2\le (1+\beta)^2\limsup_{k\to+\infty}|{\widetilde D_k}\cap A|\leq 2(1+\beta)^2,
\]
yielding \eqref{eq:limmis}.

\emph{Step 2: $\liminf_k t_k \ge  t_\infty$.} We claim that, for every 
$\eps>0$, 
\[
\partial \widetilde D_k \cap 2^{1/N}(1+\eps)B \neq\emptyset,
\]
for $k$ large. Indeed, on the one hand $0\in \widetilde D_k 
\cap 2^{1/N}(1+\eps)B$. On the other hand, recalling \eqref{eq:stimaaltoDk}
\[
|\widetilde D_k \cap 2^{1/N}(1+\eps)B| \le 2+o(1) < 
2(1+\eps)^N=|2^{1/N}(1+\eps)B|, 
\]
for $k$ large, whence 
$2^{1/N}(1+\eps)B \setminus \widetilde D_k\neq \emptyset$ and the claim follows.

Then
\[
 t_k \equiv \left.w_k\right|_{\partial \widetilde D_k} \ge 
\inf_{2^{1/N}(1+\eps)B} w_k \ge \inf_{2^{1/N}(1+\eps)B}w_\infty + o(1),
\]
for $k$ large. Since $\eps$ is arbitrary, step 2 follows.

\emph{Step 3: $\limsup_k t_k \le  t_\infty$.} We choose $A=2^{1/N}B$ in \eqref{eq:limmis} and we observe that, by uniform convergence to the decreasing function $w_\infty$, for every  
$\sigma>0$ small there exists $0<\eps<1$ such that
\begin{equation}\label{eq:unif_conv}
\sup_{A\setminus 2^{1/N}(1-\eps)B} w_k \le \inf_{2^{1/N}B}
w_\infty + \sigma = t_\infty + \sigma.
\end{equation}
Let us assume by contradiction that, up to subsequences
\[
 t_k \to  t_\infty + \sigma' 
\]
for some $\sigma'>0$. Choosing 
$\sigma<\sigma'/2$ in \eqref{eq:unif_conv} we obtain, for $k$ large,
\[
\sup_{A\setminus 2^{1/N}(1-\eps)B} w_k \le  t_k - \sigma
\qquad\implies\qquad
A\cap \widetilde D_k \subset 2^{1/N}(1-\eps)B.
\]
Then we find a contradiction with \eqref{eq:limmis}, as
\[
2 = \lim_k |{\widetilde D_k}\cap A| \le |2^{1/N}(1-\eps)B| = 2(1-\eps)^N.
\]

\emph{Step 4: proof of \eqref{eq:BdentroDk}.}
Again by uniform convergence, for every $0<\eps<1$ there exists $\sigma>0$ such that 
\[
\inf_{2^{1/N}(1-\eps)B} w_k \ge \inf_{2^{1/N}B}
w_\infty + \sigma =  t_\infty + \sigma.
\]
By the second conclusion we 
deduce that, for $k$ large
\[
\inf_{2^{1/N}(1-\eps)B} w_k \ge  t_k + \frac{\sigma}{2}
\qquad\implies\qquad
 2^{1/N}(1-\eps)B
\subset \widetilde D_k.
\]
In the same way, there exists $\sigma>0$ such that 
\[
\sup_{2^{1/N}(1+\eps)\partial B} w_k \le \inf_{2^{1/N}B}
w_\infty - \sigma =  t_\infty - \sigma\le t_k - \frac{\sigma}{2},
\]
for $k$ large, so that 
\[
\widetilde D_k \cap  2^{1/N}(1+\eps)\partial B =\emptyset.
\qedhere
\]
\end{proof}
\begin{remark}\label{rem:C2}
By \eqref{eq:BdentroDk} we have that, for $k$ sufficiently large, $m_k(z)=1$ for all $z\in 2^{1/N}(1-\eps)B$, thus 
\begin{equation}\label{eq:convC2}
w_k\to w_\infty,\qquad \text{in }C^2(2^{1/N}(1-\eps)B),
\end{equation}
and \[
w_\infty\in C^2(2^{1/N}(1-\eps)B)\cap C^{1,\vartheta}_\loc(\R^N).
\]
This follows by elliptic regularity, recalling that $w_{k}$ satisfies \eqref{eq:lapltrasformato}.
\end{remark}

The results of Lemma \ref{le:improvedconv} can be easily translated in terms of $u_\delta$, $D_\delta$.
\begin{proposition}\label{prop:nonvanish}
Let $D_\delta$ and $u_\delta$ (as above) achieve $\od(\delta)$, and $P_\delta$ be a global maximum point for $u_\delta$.
\begin{enumerate}
\item There exists a universal constant $\sigma>0$ such that, for $\delta$ small enough,
\[
\inf_{D_\delta} u_\delta \ge \sigma \delta^{-1/2}.
\]
\item For every $\eps\in (0,1)$, 
if $\delta$ is sufficiently small then there exists a connected component $D_\delta^{\text{in}}$ 
of $D_\delta$ satisfying
\begin{equation}\label{eq:BdentroDdelta}
(2\delta)^{1/N}(1-\eps)B(P_\delta)\cap\Omega \subset D_\delta^{\text{in}} \subset (2\delta)^{1/N}(1+\eps)B(P_\delta)\cap\Omega.
\end{equation}
\end{enumerate}
\end{proposition}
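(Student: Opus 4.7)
The plan is to deduce both parts of Proposition~\ref{prop:nonvanish} from Lemma~\ref{le:improvedconv} by inverting the blow-up change of variables of Lemma~\ref{le:step2}. Fix any sequence $\delta_k\to0$, let $P_k$ be a global maximizer of $u_{\delta_k}$, and set $Q_k=\Psi(P_k)$ and $T_k(z):=\Phi(Q_k+\delta_k^{1/N}z)$. By Lemmas~\ref{le:step1}--\ref{le:step2} and Remark~\ref{re:limpro}, a (not relabeled) subsequence satisfies $w_k\to cw_{[2]}$ in $C^{1,\vartheta}_\loc$, and Lemma~\ref{le:improvedconv}(2) gives $t_k\to t_\infty>0$. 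Going back to the original variables, the bathtub description \eqref{eq:dksopliv} reads $D_{\delta_k}=\{u_{\delta_k}>s_{\delta_k}\}$ with $s_{\delta_k}:=t_k\delta_k^{-1/2}$; in particular $\inf_{D_{\delta_k}}u_{\delta_k}=s_{\delta_k}$.

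Part~(1) follows by contradiction. If no universal $\sigma>0$ works, one can select $\delta_n\to0$ with $s_{\delta_n}\delta_n^{1/2}\to0$. Extracting a blow-up subsequence as above, we get $t_k=s_{\delta_k}\delta_k^{1/2}\to t_\infty>0$, contradicting $s_{\delta_n}\delta_n^{1/2}\to0$. Hence the required $\sigma$ exists.

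For Part~(2), fix $\eps\in(0,1)$ and take $D_\delta^{\text{in}}$ to be the connected component of $D_\delta$ containing $P_\delta$. Suppose by contradiction that \eqref{eq:BdentroDdelta} fails along some sequence $\delta_k\to0$. Extracting a blow-up subsequence and applying Lemma~\ref{le:improvedconv}(3) with $\eps'=\eps/2$, we obtain a connected component $\widetilde D_k^{\text{in}}$ of $\widetilde D_k$ satisfying $2^{1/N}(1-\eps')B\subset\widetilde D_k^{\text{in}}\subset 2^{1/N}(1+\eps')B$. Let $\widetilde D_{k,+}^{\text{in}}$ denote the connected component of $0$ in $\widetilde D_k^+=\widetilde D_k\cap\{z_N\ge-\alpha_k\}$. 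Since $2^{1/N}(1-\eps')B\cap\{z_N\ge-\alpha_k\}$ is connected, contains $0$ and is contained in $\widetilde D_k^+$ (by $2^{1/N}(1-\eps')B\subset\widetilde D_k$), it lies in $\widetilde D_{k,+}^{\text{in}}$; on the other hand $\widetilde D_{k,+}^{\text{in}}\subset\widetilde D_k^{\text{in}}\subset 2^{1/N}(1+\eps')B$. Through $T_k$, the set $\widetilde D_{k,+}^{\text{in}}$ corresponds exactly to $D_{\delta_k}^{\text{in}}$. Using $T_k(z)=P_k+\delta_k^{1/N}z+O(\delta_k^{2/N}|z|^2)$ for bounded $z$ (from $D\Phi(0)=\Id$ and $Q_k\to0$), together with $T_k(\{z_N\ge-\alpha_k\})\subset\Omega$, the ball sandwich transfers to
\[
(2\delta_k)^{1/N}(1-\eps)B(P_k)\cap\Omega\subset D_{\delta_k}^{\text{in}}\subset(2\delta_k)^{1/N}(1+\eps)B(P_k)\cap\Omega
\]
for $k$ large, contradicting the assumption.

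The main technical subtlety is to identify the component of $D_{\delta_k}$ at $P_k$ with a concrete set in the blow-up picture: this is why I work with the component $\widetilde D_{k,+}^{\text{in}}$ of $0$ inside the positive half $\widetilde D_k^+$ rather than with $\widetilde D_k^{\text{in}}$ itself; in this way one sidesteps any question about whether the positive part of the symmetric component $\widetilde D_k^{\text{in}}$ is connected. The placement of $2^{1/N}(1-\eps')B\cap\{z_N\ge-\alpha_k\}$ inside $\widetilde D_{k,+}^{\text{in}}$ is forced by Lemma~\ref{le:improvedconv}(3), and the quantitative transfer of the inclusions through $T_k$ is then a routine Taylor expansion.
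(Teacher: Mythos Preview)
Your proof is correct and follows essentially the same route as the paper: both parts are obtained by contradiction, passing to a blow-up subsequence and invoking Lemma~\ref{le:improvedconv}. The organization of Part~(2) differs slightly---the paper argues the two inclusions in \eqref{eq:BdentroDdelta} separately, by picking a bad point $x_k$ (either in $\Omega\setminus D_{\delta_k}$ inside the smaller ball, or in $D_{\delta_k}$ on the larger sphere) and transporting it via $\Psi$ to contradict \eqref{eq:BdentroDk} pointwise, whereas you first establish the full sandwich for $\widetilde D_{k,+}^{\text{in}}$ in blow-up coordinates and then push it forward through $T_k$. Your version is a bit more explicit about identifying the connected component; the only step you leave implicit is why $T_k(\widetilde D_{k,+}^{\text{in}})$ coincides with the full component $D_{\delta_k}^{\text{in}}$ in $\Omega$ (not just in the local chart), but this follows immediately once you note that $\widetilde D_k\cap 2^{1/N}(1+\eps')\partial B=\emptyset$ traps the component away from the boundary of the chart. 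A minor caveat: you define $D_\delta^{\text{in}}$ as the component \emph{containing} $P_\delta$, which tacitly assumes $P_\delta\in\Omega$; since at this stage $P_\delta$ could lie on $\partial\Omega$, one should rather take the component whose closure contains $P_\delta$, but this does not affect the argument.
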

\begin{proof}
For the first conclusion, let us assume by contradiction that 
$\delta_k\to0$ and $\delta_k^{1/2} \inf_{D_{\delta_k}}u_{\delta_k} =:\sigma_k\to0$. Then, taking the corresponding blow-up sequence as in Remark \ref{re:limpro} we obtain that, possibly up to subsequences, $t_k = \sigma_k\to0$, in contradiction with Lemma \ref{le:improvedconv}.

Analogously, for the second conclusion, assume by contradiction that 
$\delta_k\to0$ and, say, 
\[
x_k 
\in \Omega
\setminus
D_{\delta_k},
\qquad
x_k \in 
(2\delta_k)^{1/N}(1-\eps)B(P_k).
\]
Recalling \eqref{eq:Dk} and the fact that  $D\Psi(0)$ is the identity we obtain that 
\[
\frac{\Psi(x_k)-\Psi(P_k)}{\delta_k^{1/N}}\not\in \widetilde D_k,
\qquad
\frac{\Psi(x_k)-\Psi(P_k)}{\delta_k^{1/N}} \in 2^{1/N}(1-\eps')B
\]
for some $\eps'>0$ and $k$ large. This is in contradiction with 
\eqref{eq:BdentroDk}. The other inclusion can be obtained in the same way, considering a
sequence $x_k \in D_{\delta_k} \cap (2\delta_k)^{1/N}(1+\eps)\partial B(P_k)$.  
\end{proof}

Up to now we considered only 
global maximum points 
$P_\delta$. Now we are ready to deal also with local ones.
\begin{lemma}\label{lem:massimivicini}
Let $P'_\delta$ be any local maximum point for $u_\delta$. Then 
\[
\limsup_{\delta\to0}
\frac{|P_\delta-P'_\delta|}{\delta^{1/N}} = 0. 
\]
Equivalently, if $\delta_k\to0$ is 
such that $Q_k=\Psi(P_k)\to0$, then also $Q'_k:=\Psi(P'_k)\to0$.
\end{lemma}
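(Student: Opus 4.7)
The plan is to argue by contradiction: assume a sequence $\delta_k\to0$ along which the corresponding local maxima $P'_k:=P'_{\delta_k}$ satisfy $|P_k-P'_k|/\delta_k^{1/N}\ge c>0$. Two preliminary observations will be used throughout. First, at any local maximum $P'_\delta$ of the positive eigenfunction $u_\delta$ one has $-\Delta u_\delta(P'_\delta)\ge 0$, and the equation $-\Delta u_\delta=\od(\delta)m_\delta u_\delta$ then forces $m_\delta(P'_\delta)\ge 0$, whence $P'_\delta\in D_\delta$. Second, Proposition~\ref{prop:nonvanish}(1) yields the quantitative lower bound $u_\delta(P'_\delta)\ge\sigma\delta^{-1/2}$ for a universal $\sigma>0$; after rescaling by $\delta_k^{1/N}$ this becomes $w'_k(0)\ge\sigma$, guaranteeing non-triviality of every blow-up limit taken at $P'_k$.

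It is natural to split according to whether the normalized position
\[
z'_k:=\frac{\Psi(P'_k)-\Psi(P_k)}{\delta_k^{1/N}}
\]
stays bounded or diverges. In the bounded case, along a subsequence $z'_k\to z'_\infty$ with $|z'_\infty|>0$. Working in the blow-up frame around $P_k$ of Remark~\ref{re:limpro}, the chain rule for $\Psi$ and the even reflection across $\{z_N=-\alpha_k\}$ translate the local-max condition at $P'_k$ into $\nabla w_k(z'_k)=0$. The $C^{1,\vartheta}_{\loc}$ convergence $w_k\to w_\infty=c\,w_{[2]}$ then forces $\nabla w_\infty(z'_\infty)=0$, contradicting the fact that $w_{[2]}$ is strictly radially decreasing and hence admits $0$ as its unique critical point.

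In the divergent case $|z'_k|\to\infty$ I would run a second blow-up centered at $P'_k$, mimicking Lemmas~\ref{le:step1}--\ref{le:step2}; the non-triviality of the limit $w'_\infty$, which in Lemma~\ref{le:step1} relied on the global-max hypothesis, is now supplied by $w'_k(0)\ge\sigma$. If $\dist(P'_k,\partial\Omega)/\delta_k^{1/N}\to\infty$ no reflection is needed, so the limit weight $m'$ has mass at most $1$, i.e.\ $m'\in\Mcal=\Mcal_1$, and its Rayleigh quotient yields
\[
\lim_k\delta_k^{2/N}\od(\delta_k)=\frac{\int_{\R^N}|\nabla w'_\infty|^2}{\int_{\R^N}m'(w'_\infty)^2}\ge I_{\Mcal}=2^{2/N}I_{\Mcal_2},
\]
contradicting the identity $\lim_k\delta_k^{2/N}\od(\delta_k)=I_{\Mcal_2}$ coming from Lemma~\ref{le:step2} and Remark~\ref{rem:M1M2}. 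Otherwise $\dist(P'_k,\partial\Omega)/\delta_k^{1/N}$ stays bounded and the boundary blow-up with reflection from Lemma~\ref{le:step2} applies at $P'_k$: the same symmetry/uniqueness argument, in which the unique critical point of $cw_{[2]}(\cdot-z_0)$ must simultaneously be the origin (by the local-max condition on $w'_k$) and lie on the reflection plane $\{z_N=-\bar\alpha'\}$, forces $\bar\alpha'=0$. Lemma~\ref{le:improvedconv}(1) applied at $P'_k$ then produces a connected component of $D_{\delta_k}$ of measure $(1+o(1))\delta_k$ located around $P'_k$, disjoint from the component around $P_k$ provided by Proposition~\ref{prop:nonvanish}(2) (thanks to $|P_k-P'_k|/\delta_k^{1/N}\to\infty$); the total measure would then exceed $(2-o(1))\delta_k$, contradicting $|D_{\delta_k}|=\delta_k$.

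The main obstacle I anticipate is this last divergent subcase, where the full blow-up/reflection machinery of Lemma~\ref{le:step2} has to be re-run around an a priori arbitrary local maximum. One must check that Proposition~\ref{prop:nonvanish}(1) is genuinely the correct substitute for the global-max hypothesis at each step (compactness, admissibility $m'\in\Mcal_2$, identification of $w'_\infty$ with a shift of $w_{[2]}$) and that the mass-counting contradiction can then be decoupled between disjoint neighborhoods of $P_k$ and $P'_k$. Once these verifications are in place, the cleaner bounded case and the $\Mcal$-versus-$\Mcal_2$ scaling contradiction dispose of the remaining subcases.
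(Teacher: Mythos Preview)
Your proposal is correct and follows essentially the same route as the paper. The paper organizes the argument as: (Step 1--2) re-run Lemmas~\ref{le:step1} and~\ref{le:step2} at $P'_\delta$ using the non-vanishing bound from Proposition~\ref{prop:nonvanish}(1), then (Step 3) the mass-counting contradiction for $|P_k-P'_k|/\delta_k^{1/N}\to\infty$ via two disjoint pieces of $D_{\delta_k}$ each of measure $\ge\tfrac34\delta_k$, and finally (Step 4) the bounded case by passing the local-max condition to the limit and invoking the uniqueness of the critical point of $w_{[2]}$; you simply front-load the bounded/divergent case split, but the ingredients and their roles are identical. Two cosmetic points: the component around $P'_k$ has measure $\ge(1-\eps)^N\delta_k$ rather than $(1+o(1))\delta_k$ (this is what Proposition~\ref{prop:nonvanish}(2) gives, and is all that the counting needs), and your $z'_k$ is only defined once $P'_k$ lies in the domain of $\Psi$, so the divergent case should formally include the situation where $P'_k$ stays away from $P$.
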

\begin{proof}
We recall that $u_\delta\in C^{1,\alpha}(\overline{\Omega})$ 
is smooth and superharmonic in $D_\delta$, while it is smooth and subharmonic in $D_\delta^c$. By 
maximum principle and Hopf's lemma, we obtain that any local maximum point $P'_\delta\in\overline{D}_\delta$. Proposition \ref{prop:nonvanish} readily implies that
\begin{equation}\label{eq:no_vanish}
u_\delta (P'_\delta) \ge  
\sigma \delta^{-1/2}.
\end{equation}
for $\delta$ sufficiently small. This condition allows to repeat al the previous blow-up arguments, centered at $P'_\delta$, avoiding the vanishing of the blow-up limit.

\emph{Step 1: there exists $C>0$ such that, for $\delta$ small enough,
$
\dist(P'_\delta,\partial \Omega)\leq C\delta^{1/N}$.} The proof follows the lines of Lemma \ref{le:step1}, which contains the same result for global maximizers. According to Remark \ref{rem:maxglobvsloc} we only need to show that the limit of the blow-up sequence  $v'_j$, defined as in \eqref{eq:defscalings} but centered at $P'_j$, is non-trivial. This follows by pointwise convergence and as $v'_j(0)\ge \sigma$ for $j$ large, due to  \eqref{eq:no_vanish}.

\emph{Step 2: $\displaystyle\limsup_{\delta\to 0}\frac{\dist(P'_\delta,\partial \Omega)}{\delta^{1/N}}=0$.
} Again, we repeat the same argument of Lemma \ref{le:step2}, with the blow-up sequence $w'_k$, defined as in \eqref{eq:wk} but centered at $P'_k$, exploiting 
\eqref{eq:no_vanish} to deduce that its limit is non-trivial.

\emph{Step 3: there exists $C>0$ such that, for $\delta$ small enough, $|P_\delta-P'_\delta| \le
C\delta^{1/N}$.} Let us assume by contradiction that, for some sequence $\delta_k\to0$,
\begin{equation}\label{eq:distinfinita}
\lim_{k\to+\infty}\frac{|P_k-P_k'|}{\delta_k^{1/N}}=+\infty.
\end{equation}
Up to subsequences, and by step 2, this yields $P_k\to P\in\partial\Omega$, $P'_k\to P'\in \partial\Omega$, with $P'\neq P$. Let $w_k$ as in \eqref{eq:wk}, and let $w'_k$ be defined accordingly, centering the blow-up procedure 
at $P'_k$, with $\Psi'$ a diffeomorphism straightening the boundary near $P'$, and $Q'_k=\Psi'(P'_k)$. Proposition \ref{prop:nonvanish}, Remark \ref{re:limpro}, Lemma \ref{le:improvedconv} and equation \eqref{eq:BdentroDdelta} 
hold true also for $w'_\infty$, the blow-limit of $w'_k$. 

Now, taking $\eps$ sufficiently small in  \eqref{eq:BdentroDdelta}, we obtain, for $k$ large enough,
\[
(2\delta_k)^{1/N}(1-\eps)B(P_k)\cap\Omega\subset D_{\delta_k},
\qquad
|(2\delta_k)^{1/N}(1-\eps)B(P_k)\cap\Omega|\geq \frac34\delta_k.
\]
Analogously, 
\[
(2\delta_k)^{1/N}(1-\eps')B(P'_k)\cap\Omega\subset D_{\delta_k},
\qquad
|(2\delta_k)^{1/N}(1-\eps')B(P'_k)\cap\Omega|\geq \frac34\delta_k.
\]
Finally, by \eqref{eq:distinfinita} 
\[
(2\delta_k)^{1/N}(1-\eps)B(P_k)
\cap
(2\delta_k)^{1/N}(1-\eps')B(P'_k)
=\emptyset.
\]
Summing up we obtain the contradiction
\[
|D_{\delta_k}|\geq 2\cdot\frac34\delta_k = \frac32|D_{\delta_k}|,\qquad\text{for all $k$  sufficiently
large}.
\]

\emph{Step 4: conclusion.} Again
by contradiction, let $\delta_k\to0$ and 
\[
0<\lim_{k\to+\infty}\frac{|P_k-P_k'|}{\delta_k^{1/N}} \le C
\]
by the previous step. Considering 
the usual blow-up sequence (centered at $P_k$) we have that 
the function $w_k$ has two local maxima, one for $z=0$ and  one for $z=z_{k}
=(Q_k'-Q_k)\delta_k^{-1/N}
\le C$. Then, up to subsequences, 
$z_k\to z_\infty\neq 0$, by the contradiction assumption. This is not possible, since by uniform convergence $z_\infty$ is a local maximum for $w_\infty$, i.e.\ $z_\infty=0$. 
\end{proof}
To conclude the proof of Theorem \ref{th:intro_D} we prove the following. 
\begin{proposition}\label{prop:unsolomax}
There exists $\delta_{0}>0$ such that for every $\delta\in (0,\delta_{0})$:
\begin{enumerate}
\item $u_{\delta}$ has a unique local maximum point $P_{\delta}$,
\item $P_{\delta}\in \partial \Omega$, 
\item $D_\delta$ is connected.
\end{enumerate}
\end{proposition}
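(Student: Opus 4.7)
My plan hinges on a single rigidity property of the blow-up limit. Remark~\ref{rem:C2} ensures that $w_k\to w_\infty=c\,w_{[2]}$ in $C^2(2^{1/N}(1-\eps)B)$, where $w_\infty$ is radially symmetric with a strict interior maximum at $0$. From the PDE $-\Delta w_\infty(0)=I_{\Mcal_2}w_\infty(0)>0$ together with radiality, one has $\nabla^2 w_\infty(0)=w_\infty''(0)\,I$ with $Nw_\infty''(0)=-I_{\Mcal_2}w_\infty(0)<0$, hence the Hessian is negative definite. By uniform $C^2$ convergence this persists on a fixed ball $B_\rho(0)$ for all $k$ large, so $w_k$ is strictly concave on $B_\rho(0)$ and therefore admits a \emph{unique} critical point there. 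This single ingredient will drive claims (1) and (2).

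For claim (1), I would argue by contradiction: assume that $u_{\delta_k}$ admits a second local maximum $P'_k\neq P_k$ along some subsequence. Then $\nabla u_{\delta_k}(P'_k)=0$ translates (via the diffeomorphism $\Psi$ and, if needed, the reflection) into $\nabla w_k(z'_k)=0$ with $z'_k:=(\Psi(P'_k)-Q_k)/\delta_k^{1/N}$, while Lemma~\ref{lem:massimivicini} forces $z'_k\to 0$. Thus $0$ and $z'_k$ would be distinct critical points of $w_k$ inside $B_\rho(0)$ for $k$ large, contradicting strict concavity. For claim (2), I suppose instead $P_k\in\Omega$ for a subsequence, so that $\alpha_k>0$ in \eqref{eq:alphak}. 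The even reflection \eqref{eq:vtildek} then equips $w_k$ with a second critical point at $(0,-2\alpha_k)$, coming from the reflected image of $P_k$ across $\partial\Omega$. Since $\alpha_k\to 0$ by Lemma~\ref{le:step2}, and since \eqref{eq:BdentroDk} keeps both $0$ and $(0,-2\alpha_k)$ deep inside $\widetilde D_k^{\text{in}}$, where the $C^2$ convergence applies, the strict concavity argument again yields the contradiction.

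Claim (3) is then a qualitative consequence of (1). Writing $D_\delta=\{u_\delta>t\}$ with $t$ produced by the bathtub principle applied to $u_\delta$, any connected component $D'_\delta$ satisfies $u_\delta=t$ on $\partial D'_\delta$ and $u_\delta>t$ in its interior; moreover $-\Delta u_\delta=\od(\delta)u_\delta>0$ in $D'_\delta$, so $u_\delta$ is strictly superharmonic there. Its maximum on the compact set $\overline{D'_\delta}$, being strictly larger than $t$, is attained at an interior point, which is automatically a local maximum of $u_\delta$ in $\Omega$. If $D_\delta$ had a connected component distinct from the one $D_\delta^{\text{in}}$ furnished by Proposition~\ref{prop:nonvanish}, this would produce a second local maximum, contradicting claim (1). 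The most delicate step is handling the $C^2$ convergence of $w_k$ across the reflected hyperplane $\{z_N=-\alpha_k\}$ in claim (2), which is precisely what Remark~\ref{rem:C2} combined with \eqref{eq:BdentroDk} provides, since both critical points are kept in the region where $m_k\equiv 1$ and the elliptic regularity of \eqref{eq:lapltrasformato} upgrades the $C^{1,\vartheta}$ convergence to a full $C^2$ convergence.
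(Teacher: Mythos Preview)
Your proposal is correct and follows essentially the same approach as the paper's proof: both exploit the $C^2$ convergence of Remark~\ref{rem:C2}, the negative-definiteness of $\nabla^2 w_\infty(0)$, the reflection argument for (2), and the ``each connected component of $D_\delta$ carries a local maximum'' argument for (3). The only cosmetic difference is that the paper invokes \cite[Lemma~4.2]{nitakagi_cpam} to conclude that $\nabla w_k\neq 0$ on $\overline{B}_R\setminus\{0\}$, whereas you phrase the same conclusion as strict concavity of $w_k$ on $B_\rho$; your formulation is slightly more self-contained but otherwise identical in content.
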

\begin{proof}
It suffices to prove that $w_k$ has a unique local 
maximum point in $B_R$, for some $R>0$ and $k$ large. Indeed, in such a case, 
Lemma \ref{lem:massimivicini} already excludes the presence of other local maxima for $u_\delta$, outside $B_R$ and 1 follows; on the other hand, in case $P_k\in\Omega$ then $w_k$ would have two distinct maxima in $B_R$, by reflection (recall Lemma \ref{le:step2} and \eqref{eq:wk}), and therefore 2 follows; finally, $u_\delta$ has a local maximum in each connected component 
of $D_\delta$, and also 3 follows.

To prove the claim, let $R$ be such that $\overline{B}_R\subset 2^{1/N}B$. By Remark \ref{rem:C2} 
$w_k\to w_\infty$ 
in $C^2(\overline{B}_R)$; moreover, since $w_\infty''(0)<0$ we can take $R$ small so that $w_\infty''<0$ 
in $B_R$. Recalling that $w_k$ achieves its maximum at the origin (hence $\nabla w_k(0)=0$), and 
applying~\cite[Lemma~4.2]{nitakagi_cpam}, we deduce that $\nabla w_k(z)\not=0$ for all $z\in 
\overline B_R\setminus \{0\}$, thus the origin is the unique maximum point of $w_k$ in $B_R$ for 
$k$ sufficiently large. 
\end{proof}
\begin{remark}\label{rem:Hn-1}
By Proposition \ref{prop:unsolomax}, we have that the second conclusion of Proposition 
\ref{prop:nonvanish} holds for $D_\delta$, instead of $D_\delta^{\text{in}}$. In turn, recalling that $\partial \Omega$ is regular and $P_\delta\in\partial\Omega$, 
we deduce that
\[
{\mathcal H}^{N-1}(\partial D_{\delta}\cap \partial \Omega)\ge 
{\mathcal H}^{N-1}((2\delta)^{1/N}(1-\eps)B(P_\delta) \cap \partial \Omega)\ge C\delta^{(N-1)/N},
\]
where 
\[
0<C<\omega_{N-1}2^{(N-1)/N}\omega_N^{-(N-1)/N}
\] 
and  $\delta$ is sufficiently small.
\end{remark}

We conclude this section with some estimates about the asymptotical decay of the eigenfunction $u_{\delta}$. Such estimates can be obtained as a byproduct of the previous blow-up analysis. We 
refer the reader  to \cite[Theorem ~2.3]{nitakagi_cpam} for more details.

\begin{proposition}\label{prop:2.3}
With the notation above, for all $\eta>0$ there exists $\delta_0>0$ and a subdomain $\Omega_\delta^{(i)}\subset \Omega$ such that, for all $\delta\in (0,\delta_0)$ we have:
\begin{enumerate}
\item[(i)] $P_\delta\in \partial \Omega_\delta^{(i)}$ and $\diam(\Omega_\delta^{(i)})\leq \widehat C\delta^{1/N}$,
\item[(ii)] $\|\delta^{1/2}u_\delta(\cdot)-w(\Psi(\cdot)/\delta^{1/N})\|_{C^{1,\vartheta}(\overline \Omega_\delta^{(i)})}\leq \eta$,
\item[(iii)] $|u_\delta(x)|\leq C_1 \delta^{-\frac12}\eta e^{-\mu_1d(x)/\delta^{1/N}}$, for all $x\in \Omega\setminus \Omega_\delta^{(i)}$,
\end{enumerate}
where $d(x):=\min\{\dist(x,\partial \Omega_\delta^{(i)}),\eta_0\}$ and $\widehat C, C_1, \mu_1,\eta_0$ are positive constant depending only on $\Omega$.
\end{proposition}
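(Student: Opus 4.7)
The plan is to follow closely the strategy of \cite[Theorem~2.3]{nitakagi_cpam}, combining the blow-up convergence already obtained in Lemma \ref{le:step2}/Remark \ref{re:limpro} (which gives a local approximation of $u_\delta$ near $P_\delta$) with an exponential barrier argument (which controls $u_\delta$ away from $P_\delta$).

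For (i), I would define the interior subdomain directly from the diffeomorphism $\Psi$ of Section \ref{sec:bfa}, straightening $\partial\Omega$ around $P_\delta$. Since Proposition \ref{prop:unsolomax} gives $P_\delta\in\partial\Omega$ for $\delta$ small, I take
\[
\Omega_\delta^{(i)}:=\Phi\!\left(B^+_{\widehat C\,\delta^{1/N}}\right)\subset\Omega,
\]
for a constant $\widehat C$ that will be chosen large enough in the course of (iii). The diameter bound and the fact that $P_\delta\in\partial\Omega_\delta^{(i)}$ are then immediate from $D\Phi(0)=\mathrm{Id}$ and the smoothness of $\Phi$.

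For (ii), I would just unfold the blow-up change of variables: in the notation of \eqref{eq:wk}, the identity
\[
\delta^{1/2}u_\delta(\Phi(Q_\delta+\delta^{1/N}z))=w_\delta(z)
\qquad\text{for } z\in B^+_{\widehat C},
\]
together with the $C^{1,\vartheta}_{\mathrm{loc}}(\R^N)$-convergence $w_\delta\to w_\infty$ proved in Lemma \ref{le:step2} and the identification of $w_\infty$ with (a rescaling of) $w$ in Remark \ref{re:limpro}, gives that
$\|\delta^{1/2}u_\delta(\cdot)-w(\Psi(\cdot)/\delta^{1/N})\|_{C^{1,\vartheta}(\overline{\Omega_\delta^{(i)}})}\to 0$ as $\delta\to 0$, uniformly in the chosen $\widehat C$. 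Here one also uses $Q_\delta\to 0$ together with uniform continuity of $w_\infty$ and of its gradient on compacts, to absorb the difference between centring at $Q_\delta$ and at the origin into the $\eta$-error. In this step the only transfer from $w_\delta$ to $w$ is the scaling in Remark \ref{rmk:scaling} relating $w_{[2]}$ and $w$.

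The genuine work is (iii), which I would handle by a comparison principle. By Proposition \ref{prop:nonvanish}, enlarging $\widehat C$ if necessary, $D_\delta\subset\Omega_\delta^{(i)}$ for $\delta$ small; hence in $\Omega\setminus\Omega_\delta^{(i)}$ the eigenfunction satisfies
\[
-\Delta u_\delta+\beta\,\od(\delta)\,u_\delta=0,\qquad \partial_\nu u_\delta=0\text{ on }\partial\Omega,
\]
with $\beta\,\od(\delta)\sim \delta^{-2/N}$ by Theorem \ref{thm:bound_intro}. The natural barrier is
\[
V(x):=C_1\,\delta^{-1/2}\eta\,e^{-\mu_1 d(x)/\delta^{1/N}},\qquad d(x)=\min\{\dist(x,\partial\Omega_\delta^{(i)}),\eta_0\},
\]
with $\mu_1<\sqrt{\beta\,\od(\delta)}\,\delta^{1/N}$ (bounded away from $0$ since $\od(\delta)\,\delta^{2/N}\to I_{\Mcal_2}$), and $\eta_0$ small, depending only on $\Omega$, so that $d$ is smooth where it is not capped. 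A direct computation shows that $-\Delta V+\beta\,\od(\delta)V\geq 0$ in $\Omega\setminus\Omega_\delta^{(i)}$, provided $\mu_1$ is chosen strictly less than $\sqrt{\beta\,\od(\delta)}\,\delta^{1/N}$ by a definite margin absorbing the bounded error $|\Delta d|$ coming from the $C^2$ regularity of $\partial\Omega$. On $\partial\Omega_\delta^{(i)}\cap\Omega$, conclusion (ii) together with the exponential decay \eqref{eq:exp_decay} of $w$ yields $u_\delta\leq V$ for a suitable $C_1$; on $\partial\Omega$, one arranges $\partial_\nu V\leq 0=\partial_\nu u_\delta$ by choosing $\nabla d$ to point inward on $\partial\Omega$ (which holds because $d$ is the (capped) distance from $\partial\Omega_\delta^{(i)}$ measured inside $\Omega$, and $\partial\Omega_\delta^{(i)}\cap\partial\Omega$ meets $\partial\Omega$ transversally). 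The usual Hopf-type maximum principle for Neumann problems then gives $u_\delta\leq V$ in $\Omega\setminus\Omega_\delta^{(i)}$, which is exactly (iii).

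The main technical obstacle is therefore (iii): ensuring that the distance function $d$ can be chosen so that $-\Delta d$ is bounded and $\partial_\nu d\leq 0$ on $\partial\Omega$, and simultaneously that the exponent $\mu_1$ is uniform in $\delta$. This is the place where the $C^{2,\alpha}$ regularity of $\partial\Omega$ is used, exactly as in \cite[Theorem~2.3]{nitakagi_cpam}; the rest of the argument is bookkeeping built on Proposition \ref{prop:nonvanish}, Lemma \ref{le:step2} and the exponential decay of $w$ from Theorem \ref{thm:limit_pb}.
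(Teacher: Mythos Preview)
The paper does not actually prove this proposition: after the statement it simply says that the estimates ``can be obtained as a byproduct of the previous blow-up analysis'' and refers the reader to \cite[Theorem~2.3]{nitakagi_cpam} for details. Your proposal is precisely an outline of that adaptation, built on Lemma~\ref{le:step2}, Remark~\ref{re:limpro}, Proposition~\ref{prop:nonvanish} and the decay \eqref{eq:exp_decay}, so it is fully in line with what the paper intends.

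Two small points are worth tightening. First, in (ii) the blow-up limit is $w_\infty=c\,w_{[2]}$ with an \emph{a priori} undetermined constant $c>0$ (Remark~\ref{re:limpro}); to match the statement literally you must either fix the normalization of $u_\delta$ so that $c=1$, or allow the approximating profile to be $c\,w_{[2]}(\Psi(\cdot)/\delta^{1/N})$. Second, your handling of the Neumann condition in (iii) via $\partial_\nu V\le 0$ is delicate near $\partial\Omega_\delta^{(i)}\cap\partial\Omega$, where $\nabla d$ need not be transverse to $\partial\Omega$; the cleaner route, and the one Ni--Takagi use, is to work with the even reflection across $\partial\Omega$ through $\Psi$ (as already set up in \eqref{eq:vtildek}), so that the Neumann condition becomes automatic and the comparison is done in a full neighbourhood. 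Finally, note that your choice of $\widehat C$ ``large enough in the course of (iii)'' necessarily depends on $\eta$ (so that $w(\widehat C)\lesssim\eta$ on $\partial\Omega_\delta^{(i)}$); the paper's phrasing that $\widehat C$ depends only on $\Omega$ should be read with this in mind.
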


\section*{Acknowledgments} 
Work partially supported by the PRIN-2017-JPCAPN Grant: ``Equazioni 
differenziali alle de\-ri\-va\-te parziali non lineari'',
by the project Vain-Hopes within the program VALERE: VAnviteLli pEr la RicErca,
by the Portuguese
government through FCT/Portugal under the project PTDC/MAT-PUR/1788/2020, 
and by the INdAM-GNAMPA group.

%

\medskip
\small
\begin{flushright}
\noindent \verb"dario.mazzoleni@unipv.it"\\
Dipartimento di Matematica  ``F. Casorati'', \\
Universit\`a di Pavia\\
via Ferrata 5, 27100 Pavia, Italy\\
\medskip
\noindent \verb"benedetta.pellacci@unicampania.it"\\
Dipartimento di Matematica e Fisica,\\
Universit\`a della Campania  ``Luigi Vanvitelli''\\
via A. Lincoln 5, Caserta, Italy\\
\medskip
\noindent \verb"gianmaria.verzini@polimi.it"\\
Dipartimento di Matematica, Politecnico di Milano\\
piazza Leonardo da Vinci 32, 20133 Milano, Italy\\
\end{flushright}

\end{document}